\let\ve=\varepsilon
\newcommand{\beq}{\begin{equation}}
\newcommand{\eeq}{\end{equation}}
\newcommand{\ben}{\begin{eqnarray}}
\newcommand{\een}{\end{eqnarray}}
\newcommand{\beno}{\begin{eqnarray*}}
\newcommand{\eeno}{\end{eqnarray*}}
\renewcommand{\theequation}{\thesection.\arabic{equation}}
\newtheorem{Theorem}{Theorem}[section]
\newtheorem{Proposition}[Theorem]{Proposition}
\newtheorem{Lemma}[Theorem]{Lemma}
\newtheorem{Corollary}[Theorem]{Corollary}
\newtheorem{Remark}[Theorem]{Remark}
\newcommand{\nn}{\mathbf{n}}
\begin{document}
\title[ Sharp interface limit of a diffuse interface  model for  tumor-growth]
{Sharp interface limit of a diffuse interface  model for  tumor-growth}

\author{Mingwen Fei}
\address{School of  Mathematics and Computer Sciences, Anhui Normal University, Wuhu 241002, China}
\email{ahnufmwen@126.com}

\author{Tao Tao}
\address{School of  Mathematical Sciences, Peking University, Beijing 100871, China}
\email{taotao@amss.ac.cn}

\author{Wei Wang}
\address{Department of Mathematics, Zhejiang University, Hangzhou 310027, China}
\email{wangw07@zju.edu.cn}


\date{\today}
\maketitle

\renewcommand{\theequation}{\thesection.\arabic{equation}}
\setcounter{equation}{0}
\begin{abstract}
we  consider the asymptotic  limit of  a diffuse interface  model for   tumor-growth when a parameter $\varepsilon$ proportional to the thickness of the
diffuse interface goes to zero. An approximate solution which shows explicitly the behavior of the true solution for small $\varepsilon$ will be constructed by using matched expansion method. Based on the energy method, a spectral condition in particular, we establish a smallness estimate of the difference between the approximate solution and the true solution.\end{abstract}

\numberwithin{equation}{section}

\indent

\section{Introduction}

\indent

In this paper we consider the  singular limit, as $\varepsilon\rightarrow0$, of the solutions of the following system for $(u^\varepsilon,\sigma^\varepsilon)$:
\begin{eqnarray}
\left \{
\begin {array}{ll}
u^\varepsilon_t-\Delta \mu^\varepsilon=2\sigma^\varepsilon+u^\varepsilon-\mu^\varepsilon,\ \qquad\qquad\qquad&\text{in}\ \ \Omega\times(0,T),\\[3pt]
\sigma^\varepsilon_t-\Delta \sigma^\varepsilon=-(2\sigma^\varepsilon+u^\varepsilon-\mu^\varepsilon),\ &\text{in}\ \ \Omega\times(0,T),\\[3pt]
\varepsilon\mu^\varepsilon=-\varepsilon^2\Delta u^\varepsilon+f'(u^\varepsilon),\ &\text{in}\ \ \Omega\times(0,T),\\[3pt]
u^\varepsilon(x,0)= u^\varepsilon_0(x),\ \ \sigma^\varepsilon(x,0)= \sigma^\varepsilon_0(x),&\text{on}\ \ \Omega\times\{0\},\\[3pt]
\frac{\partial u^\varepsilon}{\partial\nn}=\frac{\partial \mu^\varepsilon}{\partial\nn}=\frac{\partial \sigma^\varepsilon}{\partial\nn}=0,\quad &\text{on}\ \partial\Omega\times(0,T).\label{model}
\end{array}
\right.
\end{eqnarray}
Here $\Omega\subset\mathbb{R}^N$ is a bounded smooth domain, $\nn$ is the unit outer normal to $\partial\Omega$, $\varepsilon^2$ is the diffusivity corresponding to the surface energy, $u^\varepsilon$ is the tumor cell concentration, $\mu^\varepsilon$ is  the chemical potential, $\sigma^\varepsilon$ is  the nutrient concentration,  and $f$ is a double equal-well potential taking its
global minimum value $0$ at $u=\pm1$. Without loss of generality  we take  $f(u)=(u^2-1)^2$.

The morphological evolution of tumor progression has been an
area of intense research interest recently(see, for instance \cite{BLM, CGRS, FBM, FGR, HVO, JWZ, LFJCLMWC, LTZ} and the references therein).
System (\ref{model}) is introduced to study the evolution of a growing solid tumor  which coexists with the host tissue. The dynamics can be divided into two stages. During the first stage,  two species are segregated according to the initial
data and interface appears around the common boundary of two species.  After a very fast time the dynamics enters the second stage in which the interface begins to involve. The second stage takes a much longer time than the first stage.

 Generation of the interface will be left in the forthcoming paper. In this paper we are interested in the later stage and assume that the interface
has been formed initially. There are two well-known approaches
to describe the motion of the interface so far.  The classical modeling approach is the so-called sharp interface
approach which treats the interface  between two phases as a $N-1$ dimensional sufficiently smooth
surface with zero width.  The second modeling
approach (the so-called diffuse interface approach)  treats the tumor/host tissue interface as a transition layer
with finite (small) width. Comparing with  the sharp interface model, the diffuse interface model has many advantages in numerical simulations of the
interfacial motion(see, for instance, \cite{CLLW} and the references therein). (\ref{model}) is a diffuse interface model related to the dynamics of tumor growth which consists of advection-reaction-diffusion equation coupled with the Cahn-Hilliard equation.

One of important and natural problems is to investigate whether the diffuse interface model
can be related to the corresponding sharp interface model by the asymptotic limit (i.e., the sharp interface limit) when the interfacial width tends to zero.  Some formal asymptotic analyses regarding the sharp interface limits of some different tumor-growth models can be found  in  \cite{GLSS, GLNS, HKNZ} for instance,  but, up to
our knowledge, only a few rigorous results  are set up  for such coupled systems.
The authors in \cite{RS} rewrote (\ref{model}) as a gradient flow   and used the techniques related to  gradient flow
 to prove that  (\ref{model}) converges to  the corresponding sharp interface model in the sense of  $\Gamma$-convergence as $\varepsilon\rightarrow 0 $.
One can see \cite{DFRSS} for  some rigorous sharp
interface limit for a model which is introduced in \cite{CWSL} and coupled with   the velocity field in a simplified case. More recently, the authors in \cite{RM} consider Cahn-Hilliard-Darcy system (first neglecting the nutrient $\sigma^\varepsilon$) that models tumor growth and prove that weak solutions  tend to varifold
solutions of a corresponding sharp interface model when the interface thickness goes to zero. One can see  \cite{AL1, CD, L, SS} for instance  for more works on  the convergence in the sense of $\Gamma$-convergence or varifold solutions, and \cite{AL2} for the sharp interface limit of the Stokes-Allen-Cahn system.

This paper focuses on the rigorous analysis on the sharp interface limit   of local classical solutions to (\ref{model}).  In \cite{ABC}  the authors
proved the classical solutions of the Cahn-Hilliard equation tend to solutions of the Mullins-Sekerka
problem (also called the Hele-Shaw problem) assuming the classical solutions of the latter exists. By employing the method used in \cite{ABC} we will  show more explicitly the asymptotic behaviors of local classical solutions $(u^\varepsilon,\sigma^\varepsilon,\mu^\varepsilon)$  in the sense of pointwise when $\varepsilon$ goes to zero and  establish  a  stronger convergence  than  the convergences  in the sense of $\Gamma$-convergence and varifold solutions in some sense.  In particular we can characterize  the evolution laws of $(u^\varepsilon,\sigma^\varepsilon,\mu^\varepsilon)$ in the transition region,  a small neighbour of $\Gamma$,  in which the behaviors are different from the ones in the two phase spaces.

The sharp interface model of  (\ref{model}) is the following two-phase flow(Theorem 5.8 and Theorem 5.9 in \cite{RS}):
\begin{eqnarray}
\left \{
\begin {array}{ll}
-\Delta \mu+\mu=2\sigma\pm1,\ \qquad\qquad\qquad&\text{in}\ \ \Omega_{\pm},\\[3pt]
\partial_t\sigma-\Delta \sigma+2\sigma=\mu\mp1,\ &\text{in}\ \ \Omega_{\pm},\\[3pt]
\mathbb{[\mu]}=\mathbb{[\sigma]}=0,\ &\text{on}\ \ \Gamma,\\[3pt]
\mathbb{[\frac{\partial\mu}{\partial\nn}]}=-2V,&\text{on}\ \ \Gamma,\\[3pt]
\mathbb{[\frac{\partial\sigma}{\partial\nn}]}=0,&\text{on}\ \ \Gamma,\\[3pt]
\mu=\kappa\int_{-1}^{1}\sqrt{2f(u)}du,&\text{on}\ \ \Gamma,\\[3pt]
\frac{\partial \mu}{\partial\nn}=\frac{\partial \sigma}{\partial\nn}=0,\quad &\text{on}\ \partial\Omega,\label{sharp interface model}
\end{array}
\right.
\end{eqnarray}
here $\Gamma$ is a closed sharp interface, $ \Omega_{-}$ and $\Omega_{+}$ are the interior and exterior of $\Gamma$ in $\Omega$ respectively, $\nn$ is the unit outer normal to $\Gamma$ from $ \Omega_{-}$ to $\Omega_{+}$ or to $\partial\Omega$, $V$ is the normal velocity of the sharp interface $\Gamma$, $\kappa$ is the mean curvature of  $\Gamma$ and $[f]$  denotes the jump condition of $f$ from $\Omega_+$ to $\Omega_-$ defined by $[f]=f|_{\Omega_+}-f|_{\Omega_-}$.

 Specifically speaking, 
%
firstly, we use Hilbert expansion method   to  construct an explicit approximate solution to (\ref{model}) around the local classical solution of (\ref{sharp interface model}) by assuming the latter exists and this process also can recover the sharp interface model (\ref{sharp interface model}). This method has been used in \cite{ABC, AKO, CCO, F, GN, WZZ2} and the references therein. In this paper Hilbert expansions will be performed in two phase space  $ \Omega_{\pm}$, transition layer and near boundary $\partial\Omega$. A kind of inner-outer matching condition will be imposed to ensure the outer expansions in $ \Omega_{\pm}$ and inner expansions in transition layer should match in the overlapped region. The same to the case that outer expansions in $ \Omega_{\pm}$ and boundary layer expansion near boundary $\partial\Omega$. Such a method is also called  matched asymptotic expansion method.

Secondly, based on the energy method, we derive the smallness of the error between the
approximate solution and the true solution. Consequently we can prove rigorously that (\ref{model}) converges to (\ref{sharp interface model}) as $\varepsilon\rightarrow 0.$
To estimate the error we mainly need to prove the following inequality
\begin{align*}
\int_{\Omega}\bigg(|\nabla v|^2+\frac{1}{\varepsilon^2}f''(u^A)v^2\bigg)dx\geq -C\int_{\Omega}v^2dx.
\end{align*}
holds for any small $\varepsilon$ and $v\in H^1(\Omega)$. Here $u^A$ is an approximate solution of $u^\varepsilon$ to be constructed and satisfies (\ref{profile of app}).  This inequality has been proved in \cite{C} and  used to prove the convergence of the Cahn-Hilliard equation to the Hele-Shaw model in \cite{ABC}. Here we write the details in
a concise way which shows clearly how to vanish the singularity $\frac{1}{\varepsilon^2}$. A sketch of the proof is as follows.

Noting that $f''(\pm1)>0$ and
\begin{align*}
u^A\sim\pm1, \ \text{in}\ \Omega_{\pm},
\end{align*}
then $\frac{1}{\varepsilon^2}f''(u^A)v^2$ is non-negative in $\Omega_{\pm}$ and thus we only need to controll $\frac{1}{\varepsilon^2}f''(u^A)v^2$ in the transition layer $\Gamma(\delta)$ which is defined in (\ref{gamma-neighbour}). According to the construction of the approximate solution,
\begin{align}
u^A\sim\theta(z)+\varepsilon\widetilde{u}^{(1)}(x,t,z)+O(\varepsilon^2), \ \ z\sim\frac{d^{(0)}(x,t)}{\varepsilon}+d^{(1)}(x,t), \ \text{in}\ \Gamma(\delta),\label{profile of app}
\end{align}
where  $\theta(z)$ is the solution to
\begin{align}
\theta''(z)=f'(\theta(z)),\ \ \theta(\pm\infty)=\pm1,\ \ \ \theta(0)=0,\label{ODE-0}
\end{align}
and $d^{(0)},d^{(1)}$ are defined in (\ref{d}), and $\widetilde{u}^{(1)}(x,t,z)$  which is defined in (\ref{in-1})  satisfies the important property Lemma \ref{u1:rewrite}.
Therefore one has
\begin{align*}
\frac{1}{\varepsilon^2}f''(u^A)v^2\sim \frac{1}{\varepsilon^2}f''(\theta(z))v^2+\frac{1}{\varepsilon} f'''(\theta(z))\widetilde{u}^{(1)}(x,t,z)v^2+O(1)v^2, \ \text{in}\ \Gamma(\delta).
\end{align*}

To deal with the most singular term $\frac{1}{\varepsilon^2}f''(\theta(z))v^2$ we will draw support from the diffusion term and  use the  estimates of the first eigenvalue, the corresponding eigenfunction and the second eigenvalue of the following  Neumann eigenvalue problem
\begin{align}
\mathcal{L}_fq:=-\frac{d^2q}{dz^2}+f''(\theta)q=\lambda q, \ \ \ z\in I_{\ve}=(-\frac{1}{\varepsilon},\frac{1}{\varepsilon});\ \ \ q'(\pm\frac{1}{\ve})=0,\label{operator:f}
\end{align}
which has been proved in \cite{C}(proved by a new method in \cite{FWZZ}). And the next singular term $\frac{1}{\varepsilon}\int_\Omega f'''(\theta(z))\widetilde{u}^{(1)}(x,t,z)v^2dx$ will vanish in some sense due to the property Lemma \ref{u1:rewrite} of $\widetilde{u}^{(1)}$.

To deal with the difficulty coming from the coupling term $2\sigma^\varepsilon+u^\varepsilon-\mu^\varepsilon$ in (\ref{model}), we consider the auxiliary variable $\varphi^\varepsilon=u^\varepsilon+\sigma^\varepsilon$ and construct an approximate solution $(\varphi^A,\sigma^A)$ to approximate the system which $(\varphi^\varepsilon,\sigma^\varepsilon)$ satisfies. The system for $(\varphi^\varepsilon,\sigma^\varepsilon)$ is a gradient-flow system(\cite{RS}).
Then we apply the energy method to estimate the errors $\varphi^\varepsilon-\varphi^A$ and $\sigma^\varepsilon-\sigma^A$.

Our main conclusion is
\begin{Theorem} \label{main theorem}  Given a  classical  solution $(\mu,\sigma,\Gamma)$ of (\ref{sharp interface model})  in $\Omega\times(0,T)$ which satisfies
\begin{align}
\text{dist}(\Gamma,\partial\Omega)>0,\ \ x\in\Omega,\ t\in[0,T].\label{positive distance}
\end{align}
Then there exist $\varepsilon_0>0$ and $C>0$ such that the following hold:
for any $0<\varepsilon<\varepsilon_0$, there exist $u^\varepsilon_0(x)$, $\sigma^\varepsilon_0(x)$ such that if $(u^\varepsilon, \sigma^\varepsilon)$ is the solution to (\ref{model}) in $\Omega\times(0,T)$, we have
\begin{align*}
\big\|u^\varepsilon-u^A\big\|_{C^{4,1}(\overline{\Omega}\times[0.T])}+\big\|\sigma^\varepsilon-\sigma^A\big\|_{C^{2,1}(\overline{\Omega}\times[0.T])}+\big\|\mu^\varepsilon-\mu^A\big\|_{C^{2,1}(\overline{\Omega}\times[0.T])}\leq C\varepsilon.
\end{align*}
where $(u^A, \sigma^A, \mu^A)$ which will be constructed in Section \ref{onstruction of an approximate solution} is an  approximate solution of  $(u^\varepsilon, \sigma^\varepsilon, \mu^\varepsilon)$  with the order of asymptotic expansion $k$ large enough.
\end{Theorem}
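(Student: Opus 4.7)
The plan is to follow the matched asymptotic expansion plus energy method strategy outlined in the introduction, combined with an auxiliary variable to resolve the coupling. First, I would construct $(u^A,\sigma^A,\mu^A)$ by Hilbert expansion in three subregions: outer expansions in $\Omega_{\pm}$, an inner expansion in a tubular neighborhood $\Gamma(\delta)$ of $\Gamma$ with stretched normal coordinate $z\sim d^{(0)}/\varepsilon+d^{(1)}$, and a boundary-layer expansion near $\partial\Omega$. Substituting the ansatz into (\ref{model}) and matching orders of $\varepsilon$ together with the inner--outer matching conditions in the overlaps gives: the leading order reproduces (\ref{sharp interface model}), the inner profile $\theta$ solves (\ref{ODE-0}), and each higher-order correction solves a linear inhomogeneous problem whose Fredholm solvability condition pins down the next geometric or outer quantity. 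I would truncate at order $k$ (to be chosen large) and take $u^\varepsilon_0,\sigma^\varepsilon_0$ as the $t=0$ traces of $u^A,\sigma^A$, so the initial error vanishes and the residuals produced when $(u^A,\sigma^A,\mu^A)$ is inserted into (\ref{model}) are $O(\varepsilon^{k})$ in suitable norms.

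Second, set $v=u^\varepsilon-u^A$, $w=\sigma^\varepsilon-\sigma^A$, $q=\mu^\varepsilon-\mu^A$. Following the hint, I would pass to the variable $\varphi^\varepsilon=u^\varepsilon+\sigma^\varepsilon$, so that the coupling term $2\sigma^\varepsilon+u^\varepsilon-\mu^\varepsilon$ enters with the gradient-flow structure of \cite{RS}, and write the error equations for $(\varphi^\varepsilon-\varphi^A,w)$. Testing the $u$-equation against $q$ and the $\sigma$-equation against $w$, and using the relation $\varepsilon q=-\varepsilon^{2}\Delta v+f'(u^\varepsilon)-f'(u^A)$ together with the expansion $f'(u^\varepsilon)-f'(u^A)=f''(u^A)v+O(v^{2})$, produces an energy identity whose delicate quadratic form is
\begin{align*}
\int_{\Omega}\Big(|\nabla v|^{2}+\frac{1}{\varepsilon^{2}}f''(u^A)v^{2}\Big)\,dx,
\end{align*}
plus lower-order quadratic terms and an $O(\varepsilon^{k})$ forcing from the residuals.

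Third, the core of the argument is to bound this functional from below by $-C\|v\|_{L^{2}}^{2}$ uniformly in $\varepsilon$. In $\Omega_{\pm}$ the integrand is nonnegative because $u^A\sim\pm1$ and $f''(\pm1)>0$, so the only danger is the transition layer $\Gamma(\delta)$. There, (\ref{profile of app}) gives
\begin{align*}
\frac{1}{\varepsilon^{2}}f''(u^A)\sim\frac{1}{\varepsilon^{2}}f''(\theta(z))+\frac{1}{\varepsilon}f'''(\theta(z))\widetilde{u}^{(1)}+O(1),
\end{align*}
and I would dominate the most singular term by $|\nabla v|^{2}$ after decomposing $v$ along the eigenmodes (in the transverse variable $z$) of the Neumann operator $\mathcal{L}_{f}$ in (\ref{operator:f}), invoking the sharp first- and second-eigenvalue bounds from \cite{C,FWZZ}: the diffusion absorbs $\varepsilon^{-2}f''(\theta)v^{2}$ at the price of an acceptable $-C\|v\|_{L^{2}}^{2}$ remainder. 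The intermediate singular term $\varepsilon^{-1}\int f'''(\theta)\widetilde{u}^{(1)}v^{2}\,dx$ is then killed by the structural property of $\widetilde{u}^{(1)}$ recorded in Lemma \ref{u1:rewrite}, which restores a factor of $\varepsilon$ after integration against $v^{2}$.

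Finally, inserting this coercivity estimate into the energy identity and closing with a Gronwall argument on $[0,T]$ yields $\|v\|_{L^{2}}+\|w\|_{L^{2}}+\varepsilon\|\nabla v\|_{L^{2}}\leq C\varepsilon^{k-c}$ for a fixed loss $c$. Choosing $k$ large enough and bootstrapping via the elliptic identity for $q$ together with parabolic Schauder estimates on the $v$- and $w$-equations upgrades this $L^{2}$ control to the claimed $C^{4,1}$ and $C^{2,1}$ bounds of order $\varepsilon$. The main obstacle throughout is the spectral-type inequality and the simultaneous cancellation of the $O(\varepsilon^{-1})$ term via Lemma \ref{u1:rewrite}: both rest on the precise structure of the inner expansion near $\Gamma$, and the estimate closes only because $\widetilde{u}^{(1)}$ has been constructed with exactly the orthogonality needed to kill the dangerous contribution.
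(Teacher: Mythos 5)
Your proposal follows essentially the same route as the paper: matched outer/inner/boundary-layer expansions truncated at large order $k$ with sharp-interface initial data, the auxiliary variable $\varphi^\varepsilon=u^\varepsilon+\sigma^\varepsilon$, an energy estimate whose coercivity rests on the spectral bound for $\mathcal{L}_f$ together with the orthogonality of $\widetilde{u}^{(1)}$ from Lemma \ref{u1:rewrite}, and a final bootstrap to $C^{4,1}\times C^{2,1}$. The only points where the paper is more specific are that the $\varphi$-equation is tested against $(-\Delta)^{-1}\varphi^{err}$ (so the spectral condition is used in its $H^{-1}$ form, Theorem \ref{spectral theorem}, rather than the $-C\|v\|_{L^2}^2$ form) and that the cubic nonlinearity $\varepsilon^{-1}\mathbb{F}$ forces the Gronwall step to be a recursive $L^p$ inequality closed by a continuity argument rather than a plain Gronwall lemma; neither changes the substance of your plan.
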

Aa a corollary we have
\begin{Corollary}\label{main corollary}Given a  classical  solution $(\mu,\sigma,\Gamma)$ of (\ref{sharp interface model})  as in Theorem \ref{main theorem}. Then there exist $u^\varepsilon_0(x)$, $\sigma^\varepsilon_0(x)$ such that if $(u^\varepsilon, \sigma^\varepsilon)$ is the solution to (\ref{model}) in $\Omega\times(0,T)$, then as $\varepsilon\rightarrow 0$, there hold
\begin{align}
&\big\|u^\varepsilon-(\pm 1)\big\|_{C(\Omega_{\pm}\backslash\Gamma(\delta))}\rightarrow 0,\ \ \bigg\|u^\varepsilon-\theta\bigg(\frac{d^{(0)}}{\varepsilon}+d^{(1)}\bigg)\bigg\|_{C(\Gamma(\delta))}\rightarrow 0,\label{main corollary-1}
\\&\big\|\sigma^\varepsilon-\sigma\big\|_{C(\overline{\Omega}\times[0.T])}+\big\|\mu^\varepsilon-\mu\big\|_{C(\overline{\Omega}\times[0.T])}\rightarrow 0,\label{main corollary-2}
\end{align}
here and in what follows $\delta$ is a small positive constant satisfying  $\delta<\frac{1}{2}\text{dist}(\Gamma,\partial\Omega)$ for $(x,t)\in\Omega\times[0,T]$.
\end{Corollary}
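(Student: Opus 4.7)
The plan is to deduce the corollary from Theorem \ref{main theorem} by the triangle inequality, using the explicit leading-order structure of the approximate solution $(u^A,\sigma^A,\mu^A)$ produced by the matched asymptotic construction. For any target $(q,U)$ I would write
\begin{align*}
\|u^\varepsilon - q\|_{C(U)} \le \|u^\varepsilon - u^A\|_{C(\overline{\Omega}\times[0,T])} + \|u^A - q\|_{C(U)},
\end{align*}
and analogously for $\sigma^\varepsilon,\mu^\varepsilon$. Theorem \ref{main theorem} already controls the first term by $C\varepsilon$, so the work reduces to showing that $u^A$, $\sigma^A$, $\mu^A$ are themselves $O(\varepsilon)$-close, in the relevant sup norms, to $\pm 1$, $\theta(d^{(0)}/\varepsilon + d^{(1)})$, $\sigma$, and $\mu$.

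In the outer region I would read off from the construction in Section \ref{onstruction of an approximate solution} that $u^A = \pm 1 + \varepsilon u^{(1)}_\pm(x,t) + O(\varepsilon^2)$ on $\Omega_\pm\setminus\Gamma(\delta)$, which yields the first estimate in (\ref{main corollary-1}). In the transition layer $\Gamma(\delta)$ the ansatz (\ref{profile of app}), combined with the uniform-in-$z$ information on $\widetilde{u}^{(1)}$ provided by Lemma \ref{u1:rewrite}, gives $u^A - \theta(d^{(0)}/\varepsilon + d^{(1)}) = O(\varepsilon)$ uniformly, which is the second estimate in (\ref{main corollary-1}).

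For (\ref{main corollary-2}), the outer leading orders and the zeroth inner and boundary-layer profiles of $\sigma^A,\mu^A$ all coincide with the restriction of the sharp-interface solution $(\sigma,\mu)$ of (\ref{sharp interface model}) to the appropriate phase. Together with the smooth cut-off used to glue the pieces, this gives $\|\sigma^A-\sigma\|_{C(\overline{\Omega}\times[0,T])} + \|\mu^A-\mu\|_{C(\overline{\Omega}\times[0,T])} \le C\varepsilon$, and invoking Theorem \ref{main theorem} produces (\ref{main corollary-2}) at rate $\varepsilon$, strictly stronger than what is claimed. The step that needs care, and which I expect to be the only non-automatic piece of the argument, is this last one: verifying that the piecewise definition of $\sigma^A$ and $\mu^A$ (outer in $\Omega_\pm\setminus\Gamma(\delta)$, inner in $\Gamma(\delta)$, plus the boundary-layer correction near $\partial\Omega$) glues together to an object uniformly $O(\varepsilon)$-close to $\sigma,\mu$ on all of $\overline{\Omega}$. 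This relies on the continuity conditions $[\mu]=[\sigma]=0$ in (\ref{sharp interface model}) together with the matching conditions enforced during the Hilbert expansion, both of which are built into the construction but must be checked rather than just quoted.
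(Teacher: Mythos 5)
Your proposal is correct and follows essentially the same route as the paper: the triangle inequality against Theorem \ref{main theorem} combined with a direct comparison of the leading-order profiles of $(u^A,\sigma^A,\mu^A)$ with the stated limits in the outer, transition, and boundary-layer regions. The one verification you defer is exactly the step the paper carries out explicitly: in the $O(\varepsilon)$-neighbourhood of $\Gamma$ where $\widetilde{\sigma}^{(0)}=\eta(z)\sigma_{+}^{(0)}+\big(1-\eta(z)\big)\sigma_{-}^{(0)}$ genuinely interpolates (i.e.\ $|d^{[k]}|<\varepsilon$), one uses $[\sigma^{(0)}]=0$ together with the mean value theorem to get $|\sigma_{\pm}^{(0)}-\sigma|\leq C|d^{(0)}|\leq C\varepsilon$ there, and the exponential inner--outer matching to handle $\Gamma(\delta)\backslash\Gamma(\frac{\delta}{2})$, which closes the argument at the rate $O(\varepsilon)$ you predict.
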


\begin{Remark}
 $u^\varepsilon_0(x)$ and $\sigma^\varepsilon_0(x)$ are defined in (\ref{initial data-impose}). The initial data like this form is often called sharp interface
initial data since we assume the interface has been formed initially. One can refer to \cite{ABC, LPW} for this kind of data.
\end{Remark}

We organize this paper as follows. In Section 2, we do $\varepsilon^k(k=0,1)$-order asymptotic expansion  and get  all the zeroth order terms and some  1th order terms. In Section 3 we establish a spectral condition and  give a  smallness estimation of the error between the approximate solution and the true solution. Then we complete the proof of Theorem \ref{main theorem} and  Corollary \ref{main corollary}. In Section 4, we do $\varepsilon^k(k\geq 2)$-order asymptotic expansion  and get  all the remained  terms. In Section 5, an approximate solution is constructed.

 Through this paper $C$ denotes a generic positive constant independent of small $\varepsilon$.
%
\indent

\section{Matched asymptotic expansion}
\indent

Let $\Gamma^\varepsilon$ be a smooth surface centered in  the transition layer. For any $t\in[0,T]$ for fixed $T>0$, let $d^\varepsilon(x,t)$ be the signed distance from $x$ to $\Gamma^\varepsilon$. Then $d^\varepsilon$ is smooth and $|\nabla d^\varepsilon|=1$ in a neighborhood of  $\Gamma^\varepsilon$. We  assume
\begin{align}
d^\varepsilon(x,t)&=d^{(0)}(x,t)+\varepsilon d^{(1)}(x,t)+\varepsilon^2 d^{(2)}(x,t)+\cdots,\label{d}
\end{align}
where $ d^{(0)}$ is a signed distance to $\Gamma$ and  $d^{(i)}(i\geq 1)$ is to be determined later.

 Since
\begin{align*}
1=|\nabla d^{\varepsilon}|^2&=|\nabla d^{(0)}|^2+2\varepsilon\nabla d^{(0)}\cdot\nabla d^{(1)}+\sum\limits_{k=2}^{+\infty}\varepsilon^k\bigg(\sum\limits_{i=0}^{k}\nabla d^{(i)}\cdot\nabla d^{(k-i)}\bigg)\\&=1+2\varepsilon\nabla d^{(0)}\cdot\nabla d^{(1)}+\sum\limits_{k=2}^{+\infty}\varepsilon^k\bigg(\sum\limits_{i=0}^{k}\nabla d^{(i)}\cdot\nabla d^{(k-i)}\bigg),
\end{align*}
then
\begin{eqnarray}
\nabla d^{(0)}\cdot\nabla d^{(k)}=\mathcal{D}_{k-1}(k\geq1)\triangleq\left \{
\begin {array}{ll}
0,\quad&k=1,\\
\\
-\frac{1}{2}\sum\limits_{i=1}^{k-1}\nabla d^{(i)}\cdot\nabla d^{(k-i)},\quad&k\geq2.\label{equ:gradient d}
\end{array}
\right.
\end{eqnarray}

Furthermore, we have $\Gamma=\{(x,t):d^{(0)}(x,t)=0\}$, $\Omega_{\pm}=\{(x,t):d^{(0)}(x,t)\gtrless 0\}$, $V=-d^{(0)}_t$ and $\kappa=-\Delta d^{(0)}$.
Defining
\begin{align}
\Gamma(\delta)=\{(x,t)\in\Omega\times(0,T):|d^{(0)}(x,t)|<\delta\}.\label{gamma-neighbour}
\end{align}

Now we  do outer expansion in $\Omega_{\pm}$, inner expansion in $\Gamma(\delta)$ and  boundary layer expansion in $\partial\Omega(\delta)=\{(x,t): \text{dist}(x,\partial\Omega)<\delta, x\in\Omega,t\in[0,T]\}$. For clarity  we only match zero-order and $\varepsilon$-order, and solve all the zero-order terms and some  1-order terms in this section. Matching $\varepsilon^k(k\geq2)$-order and all the remained  terms  will be presented in Section \ref{Matching k-order expansions}.
\subsection{Outer expansion in $\Omega_{\pm}$}
\indent

In $\Omega_{\pm}$, we  set
\begin{align}
u^\varepsilon=u^{(0)}_{\pm}+\varepsilon u^{(1)}_{\pm}+\varepsilon^2 u^{(2)}_{\pm}+\cdots,\label{out-1}\\
\mu^\varepsilon=\mu^{(0)}_{\pm}+\varepsilon \mu^{(1)}_{\pm}+\varepsilon^2 \mu^{(2)}_{\pm}+\cdots,\label{out-2}\\
\sigma^\varepsilon=\sigma^{(0)}_{\pm}+\varepsilon \sigma^{(1)}_{\pm}+\varepsilon^2 \sigma^{(2)}_{\pm}+\cdots.\label{out-3}
\end{align}

Moreover, by using Taylor expansion we  write  in $\Omega_{\pm}$
\begin{align*}
f'(u^\varepsilon)=f'(u^{(0)}_{\pm})+\varepsilon f''(u^{(0)}_{\pm})u^{(1)}_{\pm}+\cdots+\varepsilon^k\Big(f''(u^{(0)}_{\pm})u^{(k)}_{\pm}+g\big(u^{(0)}_{\pm},\cdots,u^{(k-1)}_{\pm}\big)\Big) +\cdots,
\end{align*}
here $g\big(u^{(0)}_{\pm},\cdots,u^{(k-1)}_{\pm}\big)$ depends on $u^{(0)}_{\pm},\cdots,u^{(k-1)}_{\pm}.$

Substituting (\ref{out-1})-(\ref{out-3}) into (\ref{model}) and collecting all terms of  zero order  we have
\begin{align*}
\partial_tu^{(0)}_{\pm}-\Delta \mu^{(0)}_{\pm}&=2\sigma^{(0)}_{\pm}+u^{(0)}_{\pm}-\mu^{(0)}_{\pm},\\
\partial_t\sigma^{(0)}_{\pm}-\Delta \sigma^{(0)}_{\pm}&=-\big(2\sigma^{(0)}_{\pm}+u^{(0)}_{\pm}-\mu^{(0)}_{\pm}\big),\\
f'(u^{(0)}_{\pm})&=0.
\end{align*}

We  take
\begin{align}
u_{\pm}^{(0)}=\pm1\label{out-leading}
\end{align}
and then
\begin{align}
-\Delta \mu^{(0)}_{\pm}+\mu^{(0)}_{\pm}&=2\sigma^{(0)}_{\pm}\pm1, \label{out-leading-1}  \\
\partial_t\sigma^{(0)}_{\pm}-\Delta \sigma^{(0)}_{\pm}+2\sigma^{(0)}_{\pm}&=\mu^{(0)}_{\pm}\mp1.\label{out-leading-2}
\end{align}

\indent
%
Substituting (\ref{out-1})-(\ref{out-3}) into (\ref{model}) and collecting all terms of  $\varepsilon$-order  we have
%
\begin{align*}
u^{(1)}_{\pm}&=\frac{\mu^{(0)}_{\pm}}{f''(u^{(0)}_{\pm})},\\
-\Delta \mu^{(1)}_{\pm}+\mu^{(1)}_{\pm}&=2\sigma^{(1)}_{\pm}+u^{(1)}_{\pm}-\partial_tu^{(1)}_{\pm},\\
\partial_t\sigma^{(1)}_{\pm}-\Delta \sigma^{(1)}_{\pm}+2\sigma^{(1)}_{\pm}&=\mu^{(1)}_{\pm}-u^{(1)}_{\pm}.
\end{align*}
%
\subsection{Inner expansion in $\Gamma(\delta)$}
\indent

Let  $z=\frac{d^\varepsilon}{\varepsilon}\in(-\infty,+\infty)$. In $\Gamma(\delta)$, we  set
\begin{align}
u^\varepsilon(x,t)=\widetilde{u}^\varepsilon(x,t,z)&=\widetilde{u}^{(0)}(x,t,z)+\varepsilon \widetilde{u}^{(1)}(x,t,z)+\varepsilon^2 \widetilde{u}^{(2)}(x,t,z)+\cdots,\label{in-1}\\
\mu^\varepsilon(x,t)=\widetilde{\mu}^\varepsilon(x,t,z)&=\widetilde{\mu}^{(0)}(x,t,z)+\varepsilon \widetilde{\mu}^{(1)}(x,t,z)+\varepsilon^2 \widetilde{\mu}^{(2)}(x,t,z)+\cdots,\label{in-2}\\
\sigma^\varepsilon(x,t)=\widetilde{\sigma}^\varepsilon(x,t,z)&=\widetilde{\sigma}^{(0)}(x,t,z)+\varepsilon \widetilde{\sigma}^{(1)}(x,t,z)+\varepsilon^2 \widetilde{\sigma}^{(2)}(x,t,z)+\cdots,\label{in-3}
\end{align}
and the following inner-outer matching conditions: there exists a fixed positive constant $\nu$  such  that as $z\rightarrow\pm\infty$   there hold
\begin{align}
&D_x^\alpha D_t^\beta D_z^\gamma\Big(\widetilde{u}^{(i)}(x,t,z)- u^{(i)}_{\pm}(x,t)\Big)=O(e^{-\nu|z|}),\label{mc-1}\\
&D_x^\alpha D_t^\beta D_z^\gamma\Big(\widetilde{\mu}^{(i)}(x,t,z) -\mu^{(i)}_{\pm}(x,t)\Big)=O(e^{-\nu|z|}),\label{mc-2}\\
&D_x^\alpha D_t^\beta D_z^\gamma\Big(\widetilde{\sigma}^{(i)}(x,t,z)- \sigma^{(i)}_{\pm}(x,t)\Big)=O(e^{-\nu|z|})\label{mc-3}
\end{align}
for $(x,t)\in\Gamma(\delta)$ and $0\leq\alpha,\beta,\gamma\leq N$ with $N$ depending on actual expansion order.

Using Taylor expansion and (\ref{in-1}) one has
\begin{align}
f'(\widetilde{u}^\varepsilon)=f'(\widetilde{u}^{(0)})+\varepsilon f''(\widetilde{u}^{(0)})\widetilde{u}^{(1)}+\cdots+\varepsilon^k\Big(f''(\widetilde{u}^{(0)})\widetilde{u}^{(k)}
+\widetilde{g}\big(\widetilde{u}^{(0)},\cdots,\widetilde{u}^{(k-1)}\big)\Big)+\cdots,\label{inequ-4}
\end{align}
here $g\big(\widetilde{u}^{(0)},\cdots,\widetilde{u}^{(k-1)}\big)$ depends on $\widetilde{u}^{(0)},\cdots,\widetilde{u}^{(k-1)}.$

Noting that
\begin{align*}
\widetilde{u}_t^\varepsilon(x,t,z)&=\partial_t\widetilde{u}^\varepsilon+\varepsilon^{-1}\partial_z\widetilde{u}^\varepsilon\partial_td^\varepsilon,\\
\Delta_x\widetilde{\mu}^\varepsilon(x,t,z)&=\varepsilon^{-2}\partial_{zz}\widetilde{\mu}^\varepsilon+2\varepsilon^{-1}\nabla_x\partial_z\widetilde{\mu}^\varepsilon\cdot\nabla_x d^\varepsilon+\varepsilon^{-1}\partial_z\widetilde{\mu}^\varepsilon\Delta_x d^\varepsilon+\Delta_x\widetilde{\mu}^\varepsilon,
\end{align*}
then in the new coordinate $(x,t,z)$ the first equation in (\ref{model}) becomes
\begin{align}
-\partial_{zz}\widetilde{\mu}^\varepsilon+&\varepsilon\Big(\partial_z\widetilde{u}^\varepsilon\partial_td^\varepsilon
-2\nabla_x\partial_z\widetilde{\mu}^\varepsilon\cdot\nabla_x d^\varepsilon-\partial_z\widetilde{\mu}^\varepsilon\Delta_x d^\varepsilon \Big)\nonumber
\\+&\varepsilon^2\Big(\partial_t\widetilde{u}^\varepsilon-\Delta_x \widetilde{\mu}^\varepsilon-\big(2\widetilde{\sigma}^\varepsilon+\widetilde{u}^\varepsilon-\widetilde{\mu}^\varepsilon\big)\Big)=0.\label{inequ-1}
\end{align}

Similarly, the second equation and the third equation in (\ref{model}) become respectively
\begin{align}
&-\partial_{zz}\widetilde{\sigma}^\varepsilon+\varepsilon\Big(\partial_z\widetilde{\sigma}^\varepsilon\partial_td^\varepsilon
-2\nabla_x\partial_z\widetilde{\sigma}^\varepsilon\cdot\nabla_x d^\varepsilon-\partial_z\widetilde{\sigma}^\varepsilon\Delta_x d^\varepsilon \Big)\nonumber\\ &\quad\ \ \qquad+\varepsilon^2\Big(\partial_t\widetilde{\sigma}^\varepsilon-\Delta_x \widetilde{\sigma}^\varepsilon+2\widetilde{\sigma}^\varepsilon+\widetilde{u}^\varepsilon-\widetilde{\mu}^\varepsilon\Big)=0,\label{inequ-2}\\
&-\partial_{zz}\widetilde{u}^\varepsilon+f'(\widetilde{u}^\varepsilon)-\varepsilon\Big(2\nabla_x\partial_z\widetilde{u}^\varepsilon\cdot\nabla_x d^\varepsilon+\partial_z\widetilde{u}^\varepsilon\Delta_x d^\varepsilon+\widetilde{\mu}^\varepsilon\Big)-\varepsilon^2\Delta_x\widetilde{u}^\varepsilon=0.\label{inequ-3}
\end{align}

Generally the  inner-outer exponentially decaying matching conditions (\ref{mc-1})-(\ref{mc-3}) may not necessarily hold. To ensure   this   conditions,  we will modify (\ref{inequ-1})-(\ref{inequ-3}) as follows motivated by \cite{ABC}.  For that we choose a smooth non-decreasing  function $\eta$ such that $\eta(z)=0$ for $z\leq -1$, $\eta(z)=1$ for $z\geq 1$ and define
\begin{align*}
\eta^{\pm}(z)=\eta(-M\pm z),\ \ \ z\in \mathbb{R},
\end{align*}
here the constant $M=\|d^{(1)}\|_{C^0(\Gamma(\delta))}+2$.

Now we modify (\ref{inequ-1})-(\ref{inequ-3}) as follows
 \begin{align}
&-\partial_{zz}\widetilde{\mu}^\varepsilon+\varepsilon\Big(\partial_z\widetilde{u}^\varepsilon\partial_td^\varepsilon
-2\nabla_x\partial_z\widetilde{\mu}^\varepsilon\cdot\nabla_x d^\varepsilon-\partial_z\widetilde{\mu}^\varepsilon\Delta_x d^\varepsilon \Big)\nonumber
\\&\quad\ \ \qquad+\varepsilon^2\Big(\partial_t\widetilde{u}^\varepsilon-\Delta_x \widetilde{\mu}^\varepsilon-\big(2\widetilde{\sigma}^\varepsilon+\widetilde{u}^\varepsilon-\widetilde{\mu}^\varepsilon\big)\Big)
\nonumber\\&\quad\ \ \qquad+\eta'' p^\varepsilon(d^\varepsilon-\varepsilon z)+\varepsilon \eta' g^\varepsilon(d^\varepsilon-\varepsilon z)-\varepsilon^2(s_{+}^\varepsilon\eta^++s_{-}^\varepsilon \eta^-)=0,\label{inequ-m-1}\\
&-\partial_{zz}\widetilde{\sigma}^\varepsilon+\varepsilon\Big(\partial_z\widetilde{\sigma}^\varepsilon\partial_td^\varepsilon
-2\nabla_x\partial_z\widetilde{\sigma}^\varepsilon\cdot\nabla_x d^\varepsilon-\partial_z\widetilde{\sigma}^\varepsilon\Delta_x d^\varepsilon \Big)\nonumber\\ &\quad\ \ \qquad+\varepsilon^2\Big(\partial_t\widetilde{\sigma}^\varepsilon-\Delta_x \widetilde{\sigma}^\varepsilon+2\widetilde{\sigma}^\varepsilon+\widetilde{u}^\varepsilon-\widetilde{\mu}^\varepsilon\Big)
\nonumber\\&\quad\ \ \qquad+\eta'' q^\varepsilon(d^\varepsilon-\varepsilon z)+\varepsilon \eta' h^\varepsilon(d^\varepsilon-\varepsilon z)-\varepsilon^2(r_{+}^\varepsilon\eta^++r_{-}^\varepsilon\eta^-)=0,\label{inequ-m-2}\\[5pt]
&-\partial_{zz}\widetilde{u}^\varepsilon+f'(\widetilde{u}^\varepsilon)-\varepsilon\big(2\nabla_x\partial_z\widetilde{u}^\varepsilon\cdot\nabla_x d^\varepsilon+\partial_z\widetilde{u}^\varepsilon\Delta_x d^\varepsilon+\widetilde{\mu}^\varepsilon\big)-\varepsilon^2\Delta_x\widetilde{u}^\varepsilon
\nonumber\\[5pt]
&\quad\ \ \qquad+\varepsilon\eta' l^\varepsilon(d^\varepsilon-\varepsilon z)=0,\label{inequ-m-3}
\end{align}
 where
 \begin{align*}
&g^\varepsilon(x,t)=\sum\limits_{i=0}^{+\infty}\varepsilon^ig^{(i)}(x,t),\ \ h^\varepsilon(x,t)=\sum\limits_{i=0}^{+\infty}\varepsilon^ih^{(i)}(x,t),\ \ l^\varepsilon(x,t)=\sum\limits_{i=0}^{+\infty}\varepsilon^il^{(i)}(x,t),
\\&p^\varepsilon(x,t)=\sum\limits_{i=0}^{+\infty}\varepsilon^ip^{(i)}(x,t),\ \ q^\varepsilon(x,t)=\sum\limits_{i=0}^{+\infty}\varepsilon^iq^{(i)}(x,t),
\\&s_{\pm}^\varepsilon(x,t)=\sum\limits_{i=0}^{+\infty}\varepsilon^is_{\pm}^{(i)}(x,t)=\sum\limits_{i=0}^{+\infty}\varepsilon^i\big(\partial_tu_\pm^{(i)}-\Delta \mu_\pm^{(i)}-\big(2\sigma_\pm^{(i)}+u_\pm^{(i)}-\mu_\pm^{(i)}\big)\big)(x,t),
\\&r_{\pm}^\varepsilon(x,t)=\sum\limits_{i=0}^{+\infty}\varepsilon^ir_{\pm}^{(i)}(x,t)=\sum\limits_{i=0}^{+\infty}\varepsilon^i\big(
\partial_t\sigma_\pm^{(i)}-\Delta \sigma_\pm^{(i)}-\big(2\sigma_\pm^{(i)}+u_\pm^{(i)}-\mu_\pm^{(i)}\big)\big)(x,t).
\end{align*}

\begin{Remark}
Due to $z=\frac{d^\varepsilon}{\varepsilon}$, then $d^\varepsilon-\varepsilon z=0$. Moreover, since $M=\|d^{(1)}\|_{C^0(\Gamma(\delta))}+2$, we can find $r_{+}^\varepsilon\eta^++r_{-}^\varepsilon\eta^-=0$ for $(x,t)\in\Gamma(\delta)$ and small $\varepsilon$. One can refer to Remark 4.2 in \cite{ABC} for the details. Therefore (\ref{inequ-1})-(\ref{inequ-3}) are the same to (\ref{inequ-m-1})-(\ref{inequ-m-3}) respectively. However, through the modifications, we have changed the equation of every order such that the matching conditions (\ref{mc-1})-(\ref{mc-3}) hold.
\end{Remark}

For clarity we divide into two subsections to proceed.
\subsubsection{Matching zeroth order}\label{Matching zero-order}
\indent

Substituting (\ref{d}) and (\ref{in-1})-(\ref{in-3}) into (\ref{inequ-m-1})-(\ref{inequ-m-3})  and collecting all terms of  zero order  we have
\begin{align}
&\partial_{zz}\widetilde{\mu}^{(0)}=\eta''p^{(0)}d^{(0)},\label{in-0-1}\\
&\partial_{zz}\widetilde{\sigma}^{(0)}=\eta''q^{(0)}d^{(0)},\label{in-0-2}\\
&\partial_{zz}\widetilde{u}^{(0)}=f'(\widetilde{u}^{(0)}).\label{in-0-3}
\end{align}

Now we argue  in this subsection according to the following order:
\begin{align*}
(\widetilde{\mu}^{(0)},\widetilde{\sigma}^{(0)})\rightarrow (p^{(0)},q^{(0)},\big[\mu^{(0)}\big],\big[\sigma^{(0)}\big])\rightarrow\widetilde{u}^{(0)}.
\end{align*}

$\bullet$ $(\widetilde{\mu}^{(0)},\widetilde{\sigma}^{(0)})$

From (\ref{in-0-1}) we can write
\begin{align*}
\widetilde{\mu}^{(0)}(z,x,t)=\eta(z)p^{(0)}(x,t)d^{(0)}(x,t)+a(x,t)z+b(x,t)
\end{align*}
for some $a(x,t)$ and $b(x,t)$. Since the inner-outer matching condition $\widetilde{\mu}^{(0)}(z,x,t)\rightarrow \mu_{\pm}^{(0)}(x,t)$ as $z\rightarrow \pm\infty$ needs to be satisfied, we must have
\begin{align*}
a(x,t)=0,\ b(x,t)=\mu_{-}^{(0)}(x,t)
\end{align*}
and
\begin{align}
p^{(0)}(x,t)d^{(0)}(x,t)=\mu_{+}^{(0)}(x,t)-\mu_{-}^{(0)}(x,t).\label{p0}
\end{align}

Thus we have
\begin{align}
\widetilde{\mu}^{(0)}(x,t,z)=\eta(z)\mu_{+}^{(0)}(x,t)+\big(1-\eta(z)\big)\mu_{-}^{(0)}(x,t)\label{definition:mu0-in}
\end{align}
and then
\begin{align*}
D_x^\alpha D_t^\beta D_z^\gamma\Big(\widetilde{\mu}^{(0)}(x,t,z) -\mu^{(0)}_{\pm}(x,t)\Big)=O(e^{-\nu|z|})
\end{align*}
for any $\alpha,\beta,\gamma\in\mathbb{N}$ and $\nu>0$.

Similarly, we get
\begin{align}
q^{(0)}(x,t)d^{(0)}(x,t)=\sigma_{+}^{(0)}(x,t)-\sigma_{-}^{(0)}(x,t)\label{q0}
\end{align}
and
\begin{align}
\widetilde{\sigma}^{(0)}(x,t,z)=\eta(z)\sigma_{+}^{(0)}(x,t)+\big(1-\eta(z)\big)\sigma_{-}^{(0)}(x,t)\label{definition:sigma0-in}
\end{align}
which implies
\begin{align*}
D_x^\alpha D_t^\beta D_z^\gamma\Big(\widetilde{\sigma}^{(0)}(x,t,z) -\sigma^{(0)}_{\pm}(x,t)\Big)=O(e^{-\nu|z|})
\end{align*}
for any $\alpha,\beta,\gamma\in\mathbb{N}$ and $\nu>0$.

$\bullet$ $(p^{(0)},q^{(0)},\big[\mu^{(0)}\big],\big[\sigma^{(0)}\big])$

Moreover, according to (\ref{p0}) and (\ref{q0}) there hold on $\Gamma$
\begin{align}
\big[\mu^{(0)}\big]\triangleq\mu_{+}^{(0)}-\mu_{-}^{(0)}=0,\ \ \big[\sigma^{(0)}\big]\triangleq\sigma_{+}^{(0)}-\sigma_{-}^{(0)}=0.\label{jump condition:mu0}
\end{align}
And we can define  smooth functions $p^{(0)}$ and $q^{(0)}$ in $\Gamma(\delta)$ as follows
\begin{eqnarray}
p^{(0)}=\left \{
\begin {array}{ll}
\frac{\mu_{+}^{(0)}-\mu_{-}^{(0)}}{d^{(0)}},\quad&\text{in}\ \ \Gamma(\delta)\backslash\Gamma,\\
\\
\nabla_x d^{(0)}\cdot\nabla_x(\mu_{+}^{(0)}-\mu_{-}^{(0)}),\quad &\text{on} \ \ \Gamma,
\end{array}\label{definition:p0}
\right.
\end{eqnarray}
and
\begin{eqnarray}
q^{(0)}=\left \{
\begin {array}{ll}
\frac{\sigma_{+}^{(0)}-\sigma_{-}^{(0)}}{d^{(0)}},\quad&\text{in}\ \ \Gamma(\delta)\backslash\Gamma,\\
\\
\nabla_x d^{(0)}\cdot\nabla_x(\sigma_{+}^{(0)}-\sigma_{-}^{(0)}),\quad &\text{on} \ \ \Gamma.
\end{array}\label{definition:q0}
\right.
\end{eqnarray}

$\bullet$ $\widetilde{u}^{(0)}$

By  (\ref{in-0-3}) and the inner-outer matching condition, $\widetilde{u}^{(0)}$ satisfies
\begin{align*}
\partial_{zz}\widetilde{u}^{(0)}=f'(\widetilde{u}^{(0)}),\ \ \widetilde{u}^{(0)}(\pm\infty)=u_{\pm}^{(0)}=\pm1,\ \ \ \widetilde{u}^{(0)}(0)=0,
\end{align*}
here   the condition $\widetilde{u}^{(0)}(0)=0$ is imposed to  ensure  $\widetilde{u}^{(0)}$ is unique. Therefore $\widetilde{u}^{(0)}$ is independent of $(x,t)$ and then $\widetilde{u}^{(0)}(x,t,z)=\theta(z)$ which is defined in (\ref{ODE-0}). In fact, we can get
\begin{align}
\widetilde{u}^{(0)}(x,t,z)=\theta(z)=\tanh(\sqrt{2}z)\label{in-leading}
 \end{align}
and for $k\in \mathbb{N}\cup\{0\}$
\begin{align*}
\frac{d^k}{dz^k}\big(\theta(z)+1\big)=O(e^{-\sqrt{2}|z|}), \ \text{as }z\to-\infty;\ \frac{d^k}{dz^k}\big(\theta(z)-1\big)=O(e^{-\sqrt{2}|z|}),\ \text{as }z\to+\infty,
\end{align*}
which implies
\begin{align*}
D_x^\alpha D_t^\beta D_z^\gamma\Big(\widetilde{u}^{(0)}(x,t,z) -u^{(0)}_{\pm}(x,t)\Big)=O(e^{-\sqrt{2}|z|})
\end{align*}
for any $\alpha,\beta,\gamma\in\mathbb{N}$.

\subsubsection{Matching 1th order}
\indent

Substituting (\ref{d}) and (\ref{in-1})-(\ref{in-3}) into (\ref{inequ-m-1})-(\ref{inequ-m-3})  and collecting all terms of  $\varepsilon$-order  we have
 \begin{align}
&-\partial_{zz}\widetilde{\mu}^{(1)}+\Big(\partial_z\widetilde{u}^{(0)}\partial_td^{(0)}-2\nabla_x\partial_z\widetilde{\mu}^{(0)}\cdot\nabla_x d^{(0)}-\partial_z\widetilde{\mu}^{(0)}\Delta_x d^{(0)} \Big)
\nonumber\\&\quad\ \ \ \ \qquad+\eta'' \big(p^{(1)}d^{(0)}+p^{(0)}d^{(1)}\big)-\eta''zp^{(0)}+ \eta' g^{(0)}d^{(0)}=0,\label{in-1-1}\\
&-\partial_{zz}\widetilde{\sigma}^{(1)}+\Big(\partial_z\widetilde{\sigma}^{(0)}\partial_td^{(0)}-2\nabla_x\partial_z\widetilde{\sigma}^{(0)}\cdot\nabla_x d^{(0)}-\partial_z\widetilde{\sigma}^{(0)}\Delta_x d^{(0)} \Big)
\nonumber\\&\quad\ \ \ \ \qquad+\eta'' \big(q^{(1)}d^{(0)}+q^{(0)}d^{(1)}\big)-\eta''zq^{(0)}+ \eta'h^{(0)}d^{(0)}=0,\label{in-1-2}\\
&-\partial_{zz}\widetilde{u}^{(1)}+f''(\widetilde{u}^{(0)})\widetilde{u}^{(1)}-\Big(2\nabla_x\partial_z\widetilde{u}^{(0)}\cdot\nabla_x d^{(0)}+\partial_z\widetilde{u}^{(0)}\Delta_x d^{(0)}+\widetilde{\mu}^{(0)}\Big)
\nonumber\\&\quad\ \ \ \ \qquad+\eta' l^{(0)}d^{(0)}=0.\label{in-1-3}
\end{align}

Next we argue  according to the following order:
\begin{align*}
\bigg(\widetilde{\mu}^{(1)},g^{(0)},\Big[\frac{\partial\mu^{(0)}}{\partial\nn}\Big]\bigg)\rightarrow\bigg(\widetilde{\sigma}^{(1)},h^{(0)},
\Big[\frac{\partial\sigma^{(0)}}{\partial\nn}\Big]\bigg)\rightarrow\bigg(\widetilde{u}^{(1)},l^{(0)},\mu_{\pm}^{(0)}\big|_{\Gamma}\bigg).
\end{align*}

$\bullet$ $\Big(\widetilde{\mu}^{(1)},g^{(0)},\Big[\frac{\partial\mu^{(0)}}{\partial\nn}\Big]\Big)$

For $(x,t)\in\Gamma(\delta)$, we write (\ref{in-1-1}) as
\begin{align}
-\Big(\widetilde{\mu}^{(1)}- \eta\big(p^{(1)}d^{(0)}+p^{(0)}d^{(1)}\big)\Big)_{zz}&=\eta''zp^{(0)}- \eta' g^{(0)}d^{(0)}-\partial_z\widetilde{u}^{(0)}\partial_td^{(0)}
\nonumber\\&\quad+2\nabla_x\partial_z\widetilde{\mu}^{(0)}\cdot\nabla_x d^{(0)}+\partial_z\widetilde{\mu}^{(0)}\Delta_x d^{(0)}
\nonumber\\&\triangleq \Theta_{0,1}.\label{in-1-1-0}
\end{align}

It follows from Lemma 4.3 in \cite{ABC} and direct computations that if
\begin{align}
\int_{-\infty}^{+\infty}\Theta_{0,1}(x,t,z)dz=0,\label{in-1-solving condition-1}
\end{align}
then (\ref{in-1-1-0}) has a bounded solution
\begin{align*}
\widetilde{\mu}^{(1)}(x,t,z)&=\eta(z)\big(p^{(1)}d^{(0)}+p^{(0)}d^{(1)}\big)(x,t)
\\&\quad+\int_{-\infty}^{z}\int_{z'}^{+\infty}\Theta_{0,1}(z'',x,t)dz''dz'+\mu_-^{(1)}(x,t)
\end{align*}
which and $\widetilde{\mu}^{(1)}(+\infty,x,t)=\mu_+^{(1)}(x,t)$ imply
\begin{align*}
\big(p^{(1)}d^{(0)}+p^{(0)}d^{(1)}\big)(x,t)=\mu_+^{(1)}(x,t)-\int_{-\infty}^{+\infty}\int_{z'}^{+\infty}\Theta_{0,1}(z'',x,t)dz''dz'-\mu_-^{(1)}(x,t).
\end{align*}

Hence we  obtain
\begin{align*}
\widetilde{\mu}^{(1)}(x,t,z)=&\eta(z)\mu_+^{(1)}(x,t)+\big(1-\eta(z)\big)\mu_-^{(1)}(x,t)\nonumber\\&
-\eta(z)\int_{-\infty}^{+\infty}\int_{z'}^{+\infty}\Theta_{0,1}(z'',x,t)dz''dz'
\nonumber\\&+\int_{-\infty}^{z}\int_{z'}^{+\infty}\Theta_{0,1}(z'',x,t)dz''dz'
\end{align*}
and
\begin{align*}
\big[\mu^{(1)}\big]\triangleq\mu_{+}^{(1)}-\mu_{-}^{(1)}=p^{(0)}d^{(1)}+\int_{-\infty}^{+\infty}\int_{z'}^{+\infty}\Theta_{0,1}(z'',x,t)dz''dz',\ \  \ \text{on}\ \ \Gamma.
\end{align*}

According to the results obtained in subsection \ref{Matching zero-order} one has for any $\alpha,\beta,\gamma\in\mathbb{N}$,
\begin{align*}
D_x^\alpha D_t^\beta D_z^\gamma\Theta_{0,1}=O(e^{-\nu|z|}),\ \ \text{for\ some }\ \nu>0,
\end{align*}
and thus
\begin{align*}
D_x^\alpha D_t^\beta D_z^\gamma\Big(\widetilde{\mu}^{(1)}(x,t,z) -\mu^{(1)}_{\pm}(x,t)\Big)=O(e^{-\nu|z|}),\ \ \text{for\ some }\ \nu>0.
\end{align*}

Moreover, by (\ref{in-1-solving condition-1}), (\ref{jump condition:mu0}), (\ref{definition:p0}) and direct computations we can get
\begin{align}
\bigg[\frac{\partial\mu^{(0)}}{\partial\nn}\bigg]\triangleq\nabla_x\big(\mu_{+}^{(0)}-\mu_{-}^{(0)}\big)\cdot\nabla_x d^{(0)}= 2d_t^{(0)}\triangleq -2V,\ \ \text{on}\ \Gamma,\label{jump condition:mun0}
\end{align}
and define a smooth function $g^{(0)}$ in $\Gamma(\delta)$
\begin{eqnarray}
g^{(0)}=\left \{
\begin {array}{ll}
\frac{(\mu_{+}^{(0)}-\mu_{-}^{(0)})\triangle_x d^{(0)}+2\nabla_x(\mu_{+}^{(0)}-\mu_{-}^{(0)})\cdot\nabla_x d^{(0)}-p^0-2\partial_td^{(0)}}{d^{(0)}},\quad&\text{in}\ \ \Gamma(\delta)\backslash\Gamma,\\
\\
\nabla_x d^{(0)}\cdot\nabla_x\big((\mu_{+}^{(0)}-\mu_{-}^{(0)})\Delta_x d^{(0)}+2\nabla_x(\mu_{+}^{(0)}-\mu_{-}^{(0)})\cdot\nabla_x d^{(0)}
\\ \qquad\qquad\qquad\qquad-p^0-2\partial_td^{(0)}\big),\quad &\text{on} \ \ \Gamma.
\end{array}\label{definition:g0}
\right.
\end{eqnarray}

$\bullet$ $\Big(\widetilde{\sigma}^{(1)},h^{(0)},\big[\frac{\partial\sigma^{(0)}}{\partial\nn}\big]\Big)$

Applying the above similar arguments to (\ref{in-1-2}), we obtain  that if
\begin{align}
\int_{-\infty}^{+\infty}\Theta_{0,2}(x,t,z)dz=0,\label{in-1-solving condition-1-1}
\end{align}
then (\ref{in-1-2}) has a bounded solution
\begin{align*}
\widetilde{\sigma}^{(1)}(x,t,z)=&\eta(z)\sigma_+^{(1)}(x,t)+\big(1-\eta(z)\big)\sigma_-^{(1)}(x,t)\nonumber\\&
-\eta(z)\int_{-\infty}^{+\infty}\int_{z'}^{+\infty}\Theta_{0,2}(x,t,z'')dz''dz'
\nonumber\\&+\int_{-\infty}^{z}\int_{z'}^{+\infty}\Theta_{0,2}(x,t,z'')dz''dz',
\end{align*}
and for any $\alpha,\beta,\gamma\in\mathbb{N}$,
\begin{align*}
D_x^\alpha D_t^\beta D_z^\gamma\Big(\widetilde{\sigma}^{(1)}(x,t,z) -\sigma^{(1)}_{\pm}(x,t)\Big)=O(e^{-\nu|z|}),\ \ \text{for\ some }\ \nu>0,
\end{align*}
and
\begin{align*}
\big[\sigma^{(1)}\big]\triangleq\sigma_{+}^{(1)}-\sigma_{-}^{(1)}=q^{(0)}d^{(1)}+\int_{-\infty}^{+\infty}\int_{z'}^{+\infty}\Theta_{0,2}(x,t,z'')dz''dz',\ \  \ \text{on}\ \ \Gamma,
\end{align*}
where
\begin{align*}
\Theta_{0,2}\triangleq \eta''zq^{(0)}- \eta' h^{(0)}d^{(0)}-\partial_z\widetilde{\sigma}^{(0)}\partial_td^{(0)}
+2\nabla_x\partial_z\widetilde{\sigma}^{(0)}\cdot\nabla_x d^{(0)}+\partial_z\widetilde{\sigma}^{(0)}\triangle_x d^{(0)}.
\end{align*}
%
Moreover, it follows from (\ref{in-1-solving condition-1-1}), (\ref{jump condition:mu0}), (\ref{definition:q0}) and direct computations that
\begin{align}
\bigg[\frac{\partial\sigma^{(0)}}{\partial\nn}\bigg]\triangleq\nabla\big(\sigma_{+}^{(0)}-\sigma_{-}^{(0)}\big)\cdot\nabla d^{(0)}= 0,\ \ \text{on}\ \Gamma,\label{jump condition:sigman0}
\end{align}
and
\begin{eqnarray}
h^{(0)}=\left \{
\begin {array}{ll}
\frac{(\sigma_{+}^{(0)}-\sigma_{-}^{(0)})(\triangle_x d^{(0)}-\partial_td^{(0)})+2\nabla_x(\sigma_{+}^{(0)}-\sigma_{-}^{(0)})\cdot\nabla_x d^{(0)}-q^0}{d^{(0)}},\quad&\text{in}\ \ \Gamma(\delta)\backslash\Gamma,\\
\\
\nabla_x d^{(0)}\cdot\nabla_x\big((\sigma_{+}^{(0)}-\sigma_{-}^{(0)})(\Delta_x d^{(0)}-\partial_td^{(0)})
\\ \qquad\qquad\qquad+2\nabla_x(\sigma_{+}^{(0)}-\sigma_{-}^{(0)})\cdot\nabla_x d^{(0)}-q^0\big),\quad &\text{on} \ \ \Gamma.
\end{array}\label{definition:h0}
\right.
\end{eqnarray}

$\bullet$ $\big(\widetilde{u}^{(1)},l^{(0)},\mu_{\pm}^{(0)}\big|_{\Gamma}\big)$

Based on the method of variation of constants for ODE and direct computations (or Lemma 4.3 in \cite{ABC}) we find that if
 \begin{align}
\int_{-\infty}^{+\infty}\Theta_{0,3}(x,t,z)\theta'(z)dz=0,\label{in-1-solving condition-2}
\end{align}
then the solution to (\ref{in-1-3})  with $\widetilde{u}^{(1)}(0,x,t)=1$ is
\begin{align*}
\widetilde{u}^{(1)}(x,t,z)=\frac{\theta'(z)}{\theta'(0)}+\theta'(z)\int_{0}^{z}(\theta'(\varsigma))^{-2}\int_\varsigma^{+\infty}\Theta_{0,3}(x,t,\tau)\theta'(\tau)d\tau d\varsigma
\end{align*}
which satisfies for any $\alpha,\beta,\gamma\in\mathbb{N}$,
\begin{align*}
D_x^\alpha D_t^\beta D_z^\gamma\Big(\widetilde{u}^{(1)}(x,t,z) -u^{(1)}_{\pm}(x,t)\Big)=O(e^{-\nu|z|}),\ \ \text{for\ some }\ \nu>0,
\end{align*}
where
\begin{align*}
\Theta_{0,3}&\triangleq2\nabla_x\partial_z\widetilde{u}^{(0)}\cdot\nabla_x d^{(0)}+\partial_z\widetilde{u}^{(0)}\Delta_x d^{(0)}+\widetilde{\mu}^{(0)}-\eta' l^0d^{(0)}
\nonumber\\&=\theta'\Delta_x d^{(0)}+\widetilde{\mu}^{(0)}-\eta' l^0d^{(0)}.
\end{align*}

%
Due to (\ref{in-1-solving condition-2}) there holds
\begin{align}
\mu_{\pm}^{(0)}(x,t)&=-\Delta_x d^{(0)}\int_{-\infty}^{+\infty}(\theta'(z))^2dz=\kappa\int_{-\infty}^{+\infty}(\theta'(z))^2dz
\nonumber\\&=2\kappa\int_{-\infty}^{+\infty}f(\theta(z))dz=\kappa\int_{-1}^{1}\sqrt{2f(u)}du,\ \ \  \ \text{on}\  \ \Gamma,\label{curvature condition}
\end{align}
here we have used $\theta'(z)=\sqrt{2f(\theta(z))}$.

Furthermore, by (\ref{in-1-solving condition-2})  a smooth function $l^{(0)}$ is defined as follows:
\begin{eqnarray}
l^{(0)}=\left \{
\begin {array}{ll}
\frac{1}{d^{(0)}\int_{-\infty}^{+\infty}\eta'\theta'dz}\Big(\Delta_x d^{(0)}\int_{-\infty}^{+\infty}(\theta')^2dz+\int_{-\infty}^{+\infty}\widetilde{\mu}^{(0)}\theta'dz\Big),\quad&\text{in}\ \ \Gamma(\delta)\backslash\Gamma,\\
\\
\frac{1}{\int_{-\infty}^{+\infty}\eta'\theta'dz}\nabla_x d^{(0)}\cdot\nabla_x\Big(\Delta_x d^{(0)}\int_{-\infty}^{+\infty}(\theta')^2dz+\int_{-\infty}^{+\infty}\widetilde{\mu}^{(0)}\theta'dz\Big),\quad &\text{on} \ \ \Gamma.
\end{array}\label{definition:l0}
\right.
\end{eqnarray}

%

\subsection{Boundary layer expansion in $\partial\Omega(\delta)$ }\label{section:bdexpan}

\indent

Let $d_B(x)<0$ be the signed distance from $x$ to $\partial\Omega$ and  $z=\frac{d_B(x)}{\varepsilon}\in(-\infty,0]$. In $\partial\Omega(\delta)=\{x\in\overline{\Omega}:-\delta<d_B(x)\leq0\}$, we  set
\begin{align}
u^\varepsilon(x,t)=u_B^\varepsilon(x,t,z)&=u_B^{(0)}(x,t,z)+\varepsilon u_B^{(1)}(x,t,z)+\varepsilon^2 u_B^{(2)}(x,t,z)+\cdots,\label{bd-1}\\
\mu^\varepsilon(x,t)=\mu_B^\varepsilon(x,t,z)&=\mu_B^{(0)}(x,t,z)+\varepsilon \mu_B^{(1)}(x,t,z)+\varepsilon^2 \mu_B^{(2)}(x,t,z)+\cdots,\label{bd-2}\\
\sigma^\varepsilon(x,t)=\sigma_B^\varepsilon(x,t,z)&=\sigma_B^{(0)}(x,t,z)+\varepsilon \sigma_B^{(1)}(x,t,z)+\varepsilon^2 \sigma_B^{(2)}(x,t,z)+\cdots,\label{bd-3}
\end{align}
and the following boundary-outer matching conditions: there exists a fixed positive constant $\nu$  such  that as $z\rightarrow-\infty$   there hold
\begin{align}
&D_x^\alpha D_t^\beta D_z^\gamma\Big(u_B^{(i)}(x,t,z)- u^{(i)}_{+}(x,t)\Big)=O(e^{-\nu|z|}),\label{bmc-1}\\
&D_x^\alpha D_t^\beta D_z^\gamma\Big(\mu_B^{(i)}(x,t,z) -\mu^{(i)}_{+}(x,t)\Big)=O(e^{-\nu|z|}),\label{bmc-2}\\
&D_x^\alpha D_t^\beta D_z^\gamma\Big(\sigma_B^{(i)}(x,t,z)- \sigma^{(i)}_{+}(x,t)\Big)=O(e^{-\nu|z|}),\label{bmc-3}
\end{align}
for $(x,t)\in\partial\Omega(\delta)\times[0,T]$ and $0\leq\alpha,\beta,\gamma\leq N$ with $N$ depending on actual expansion order.

Using Taylor expansion and (\ref{bd-1}) one has
\begin{align*}
f'(u_B^\varepsilon)=f'(u_B^{(0)})+\varepsilon f''(u_B^{(0)})u_B^{(1)}+\cdots+\varepsilon^k\Big(f''(u_B^{(0)})u_B^{(k)}+g_{_B}\big(u_B^{(0)},\cdots,u_B^{(k-1)}\big)\Big)+\cdots,
\end{align*}
here the function $g_{_B}\big(u_B^{(0)},\cdots,u_B^{(k-1)}\big)$ depends on $u_B^{(0)},\cdots,u_B^{(k-1)}.$
%

 We write (\ref{model}) in $\partial\Omega(\delta)$ in the new coordinate $(x,t,z)$ as follows
\begin{align}
&-\partial_{zz}\mu_B^\varepsilon-\varepsilon\Big(2\nabla_x\partial_z\mu_B^\varepsilon\cdot \nabla_x d_B+\partial_z\mu_B^\varepsilon \Delta_x d_B \Big)+\varepsilon^2\Big(\partial_tu_B^\varepsilon\nonumber
\\&\qquad\qquad\qquad\qquad\qquad\qquad\qquad\qquad-\Delta_x \mu_B^\varepsilon-\big(2\sigma_B^\varepsilon+u_B^\varepsilon-\mu_B^\varepsilon\big)\Big)=0,\label{bdequ-1}\\
&-\partial_{zz}\sigma_B^\varepsilon-\varepsilon\Big(2\nabla_x\partial_z\sigma_B^\varepsilon\cdot\nabla_x d_B+\partial_z\sigma_B^\varepsilon\Delta_x d_B \Big)+\varepsilon^2\Big(\partial_t\sigma_B^\varepsilon\nonumber
\\&\qquad\qquad\qquad\qquad\qquad\qquad\qquad\qquad-\Delta_x \sigma_B^\varepsilon+2\sigma_B^\varepsilon+u_B^\varepsilon-\mu_B^\varepsilon\Big)=0,\label{bdequ-2}\\
&-\partial_{zz}u_B^\varepsilon+f'(u_B^\varepsilon)-\varepsilon\big(2\nabla_x\partial_zu_B^\varepsilon\cdot\nabla_x d_B+\partial_zu_B^\varepsilon\Delta_x d_B+\mu_B^\varepsilon\big)-\varepsilon^2\Delta_x u_B^\varepsilon=0.\label{bdequ-3}
\end{align}

Moreover, homogeneous Neumann boundary conditions in (\ref{model}) imply on $\partial\Omega\times[0,T]$
\begin{align}
&\partial_{z}\mu_B^\varepsilon(x,t,0)+\varepsilon\nabla_x\mu_B^\varepsilon(x,t,0)\cdot \nabla_x d_B(x,t)=0,\label{bdc-1}\\
&\partial_{z}\sigma_B^\varepsilon(x,t,0)+\varepsilon\nabla_x\sigma_B^\varepsilon(x,t,0)\cdot \nabla_x d_B(x,t)=0,\label{bdc-2}\\
&\partial_{z}u_B^\varepsilon(x,t,0)+\varepsilon\nabla_xu_B^\varepsilon(x,t,0)\cdot \nabla_x d_B(x,t)=0.\label{bdc-3}
\end{align}

Firstly substituting (\ref{bd-1})-(\ref{bd-3}) into (\ref{bdequ-1})-(\ref{bdequ-3}) and (\ref{bdc-1})-(\ref{bdc-3}) and collecting all terms of  zero order we have
\begin{align*}
&-\partial_{zz}\mu_B^{(0)}=0,\\
&-\partial_{zz}\sigma_B^{(0)}=0,\\
&-\partial_{zz}u_B^{(0)}+f'(u_B^{(0)})=0,
\end{align*}
and on $\partial\Omega\times[0,T]$
\begin{align*}
&\partial_{z}\mu_B^{(0)}(x,t,0)=0,\\
&\partial_{z}\sigma_B^{(0)}(x,t,0)=0,\\
&\partial_{z}u_B^{(0)}(x,t,0)=0.
\end{align*}

%
Therefore we can take
\begin{align}
\mu_B^{(0)}=\mu^{(0)}_{+},\ \ \ \sigma_B^{(0)}=\sigma^{(0)}_{+},\ \ \ u_B^{(0)}=u^{(0)}_{+}=1,\label{definition:0bd}
\end{align}
which satisfy (\ref{bmc-1})-(\ref{bmc-3}) in the case of $i=0.$

Next substituting (\ref{bd-1})-(\ref{bd-3}) into (\ref{bdequ-1})-(\ref{bdequ-3}) and (\ref{bdc-1})-(\ref{bdc-3}) and collecting all terms of  $\varepsilon$-order we have
\begin{align*}
&-\partial_{zz}\mu_B^{(1)}=0,\\
&-\partial_{zz}\sigma_B^{(1)}=0,\\
&-\partial_{zz}u_B^{(1)}+f''(1)u_B^{(1)}=\mu_B^{(0)},
\end{align*}
and  on $\partial\Omega\times[0,T]$
\begin{align*}
&\partial_{z}\mu_B^{(1)}(x,t,0)=-\frac{\partial\mu^{(0)}_{+}}{\partial\nn},\\
& \partial_{z}\sigma_B^{(1)}(x,t,0)=-\frac{\partial\sigma^{(0)}_{+}}{\partial\nn},\\
&\partial_{z}u_B^{(1)}(x,t,0)=0,
\end{align*}
here $\nn$ is the unit outer normal to $\partial\Omega$.

Therefore we can take
\begin{align*}
\mu_B^{(1)}=\mu^{(1)}_{+},\ \ \ \sigma_B^{(1)}=\sigma^{(1)}_{+},\ \ \ u_B^{(1)}=u^{(1)}_{+},
\end{align*}
which satisfy (\ref{bmc-1})-(\ref{bmc-3}) in the case of $i=1$ and imply  on $\partial\Omega\times[0,T]$
\begin{align}
\frac{\partial\mu^{(0)}_{+}}{\partial\nn}=\frac{\partial\sigma^{(0)}_{+}}{\partial\nn}=0.\label{bdc-11-0}
\end{align}

\subsection{Solving the leading order terms}
\indent

 $u^{(0)}_{\pm}$ and $\widetilde{u}^{(0)}$ are determined by  (\ref{out-leading}) and (\ref{in-leading}) respectively. Collecting (\ref{out-leading-1}), (\ref{out-leading-2}), (\ref{jump condition:mu0}), (\ref{jump condition:mun0}), (\ref{jump condition:sigman0}), (\ref{curvature condition}) and (\ref{bdc-11-0}) one has
\begin{eqnarray*}
\left \{
\begin {array}{ll}
-\Delta \mu^{(0)}_{\pm}+\mu^{(0)}_{\pm}=2\sigma^{(0)}_{\pm}\pm1,\ \qquad\qquad\qquad&\text{in}\ \ \Omega_{\pm},\\[5pt]
\partial_t\sigma^{(0)}_{\pm}-\Delta \sigma^{(0)}_{\pm}+2\sigma^{(0)}_{\pm}=\mu^{(0)}_{\pm}\mp1,\ &\text{in}\ \ \Omega_{\pm},\\[5pt]
\left[\mu^{(0)}\right]=[\sigma^{(0)}]=0,\ &\text{on}\ \ \Gamma,\\[5pt]
\big[\frac{\partial\mu^{(0)}}{\partial\nn}\big]=-2V,&\text{on}\ \ \Gamma,\\[5pt]
\big[\frac{\partial\sigma^{(0)}}{\partial\nn}\big]=0,&\text{on}\ \ \Gamma,\\[5pt]
\mu^{(0)}_{\pm}=\kappa\int_{-1}^{1}\sqrt{2f(u)}du,&\text{on}\ \ \Gamma,\\[5pt]
\frac{\partial \mu_+^{(0)}}{\partial\nn}=\frac{\partial \sigma_+^{(0)}}{\partial\nn}=0,\quad &\text{on}\ \partial\Omega,
\end{array}
\right.
\end{eqnarray*}
which is just the sharp interface model (\ref{sharp interface model}) of (\ref{model}).  Therefore we recover the sharp interface model by the above matched asymptotic expansion method.

Let (\ref{sharp interface model}) has a local smooth solution $(\mu,\sigma,\Gamma)$ which satisfies  (\ref{positive distance}). Let  $\mu^{(0)}_{\pm}=\mu\big|_{\Omega_{\pm}}, \sigma^{(0)}_{\pm}=\sigma\big|_{\Omega_{\pm}}$ and $d^{(0)}$ be the signed distance to $\Gamma$. Then
$\Gamma=\{(x,t)\in\Omega\times(0,T):d^{(0)}(x,t)=0\}$ and $\Omega_{\pm}=\{(x,t)\in\Omega\times(0,T):d^{(0)}(x,t)\gtrless 0\}$.
$p^{(0)},q^{(0)},g^{(0)},h^{(0)}$ and $l^{(0)}$ are determined by (\ref{definition:p0}), (\ref{definition:q0}), (\ref{definition:g0}), (\ref{definition:h0}) and (\ref{definition:l0}) respectively. $\widetilde{\mu}^{(0)}$ and $\widetilde{\sigma}^{(0)}$ are determined by (\ref{definition:mu0-in}) and (\ref{definition:sigma0-in}) respectively. And $\mu^{(0)}_{B}, \sigma^{(0)}_{B},u^{(0)}_{B}$ are determined by (\ref{definition:0bd}).  Moreover, the inner-outer matching conditions (\ref{mc-1})-(\ref{mc-3}) and  the boundary-outer matching conditions (\ref{bmc-1})-(\ref{bmc-3}) hold   for $i=0.$

\begin{Remark}
We can extend $(u^{(0)}_{\pm}, \mu^{(0)}_{\pm}, \sigma^{(0)}_{\pm})$ smoothly from $\Omega_{\pm}$ to $\Omega$ as in Remark 4.1 in  \cite{ABC}.
\end{Remark}
\indent

\section{Spectral condition and error estimates}
\indent

For clarity we leave the higher order expansions and the construction of the approximate solution $(u^A, \mu^A,\sigma^A, \varphi^A)$ which is defined in (\ref{approximate solutions}) to the last two sections.
In this section we firstly establish a spectral condition and  give a  smallness estimation of the error between the approximate solution and the true solution. Then Theorem \ref{main theorem} and  Corollary \ref{main corollary} will  be proved.
\subsection{Spectral condition}
\indent

%
\begin{Theorem}\label{spectral theorem} (Spectral condition) There exist two positive constants $\varepsilon_0$ and $C$ such that for any $0<\varepsilon<\varepsilon_0$, $v\in H^1(\Omega)$ and $w\in H^2(\Omega)$ with $\Delta w=v$ there holds
\begin{align}
\int_{\Omega}\bigg(\varepsilon|\nabla v|^2+\frac{1}{\varepsilon}f''(u^A)v^2\bigg)dx\geq -C\int_{\Omega}|\nabla w|^2dx,\label{spectral}
\end{align}
where $u^A$ is  an approximate solution of $u^\varepsilon$ which satisfies (\ref{profile of app}) and will be  constructed in Section \ref{onstruction of an approximate solution}.
\end{Theorem}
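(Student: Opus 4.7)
The plan is to localize the analysis to the transition layer $\Gamma(\delta)$. On $\Omega\setminus\Gamma(\delta)$ the construction of Section \ref{onstruction of an approximate solution} will give $u^A$ uniformly close to $\pm 1$, hence $f''(u^A)\geq c_0>0$, so there the contribution $\frac{1}{\varepsilon}f''(u^A)v^2$ is nonnegative and of size $\varepsilon^{-1}$. All the difficulty is concentrated in $\Gamma(\delta)$. Inside $\Gamma(\delta)$ I plan to pass to curvilinear coordinates $(s,z)$ with $z=d^{(0)}/\varepsilon$ the stretched normal variable, and then use the profile (\ref{profile of app}) to Taylor-expand
\begin{align*}
\frac{1}{\varepsilon}f''(u^A)=\frac{1}{\varepsilon}f''(\theta(z))+f'''(\theta(z))\,\widetilde u^{(1)}+O(\varepsilon).
\end{align*}
After the change of variables, the leading piece acts slice-wise through the $1$D Neumann operator $\mathcal{L}_f=-\partial_z^2+f''(\theta)$ on $I_\varepsilon$, whose spectral structure from \cite{C,FWZZ} is the central tool: the first eigenvalue satisfies $\lambda_1^\varepsilon=O(e^{-c/\varepsilon})$, the normalized first eigenfunction $\phi_1^\varepsilon$ is exponentially close to $\theta'/\|\theta'\|_{L^2}$, and the second eigenvalue is gapped, $\lambda_2^\varepsilon\geq c>0$.

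The main step is a fiberwise spectral decomposition $v(s,z)=\alpha(s)\phi_1^\varepsilon(z)+v^\perp(s,z)$ with $\int v^\perp\phi_1^\varepsilon\,dz=0$ in each normal fiber. For $v^\perp$ the spectral gap gives
\begin{align*}
\int_{I_\varepsilon}\bigl(|\partial_zv^\perp|^2+f''(\theta)(v^\perp)^2\bigr)dz\geq\lambda_2^\varepsilon\int_{I_\varepsilon}(v^\perp)^2dz,
\end{align*}
so after integration in $s$ this component contributes nonnegatively and even dominates $\frac{1}{\varepsilon}\int(v^\perp)^2dx$. The dangerous soft-mode piece $\alpha(s)\phi_1^\varepsilon(z)$ contributes only $O(\alpha^2\lambda_1^\varepsilon)$ through $f''(\theta)$ plus tangential energy $\varepsilon\|\nabla_s\alpha\|^2$ coming from the $\varepsilon|\nabla v|^2$ term; neither of these is visibly nonnegative or individually controlled, and this is where the assumption $\Delta w=v$ is used. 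The plan is to test that identity against a smooth extension of $\alpha\phi_1^\varepsilon$ to $\Omega$,
\begin{align*}
\int_\Omega\alpha\,\phi_1^\varepsilon\,v\,dx=-\int_\Omega\nabla w\cdot\nabla(\alpha\phi_1^\varepsilon)\,dx,
\end{align*}
and use Cauchy--Schwarz together with the physical-scale estimate $\|\phi_1^\varepsilon\|_{L^2(dx)}=O(\sqrt{\varepsilon})$ to convert the soft-mode contribution into a term absorbable by $\varepsilon\|\nabla v\|_{L^2}^2$ on the left plus $C\|\nabla w\|_{L^2}^2$ on the right, which is exactly the shape of (\ref{spectral}).

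The one remaining subtlety is the intermediate term $\int f'''(\theta)\widetilde u^{(1)}v^2\,dx$, which is formally $O(1)$ and therefore cannot be absorbed by trivial estimates on $\int v^2$. Here I will invoke Lemma \ref{u1:rewrite}: the content of that lemma (whose proof I defer to its statement in the later section) should let me rewrite $f'''(\theta)\widetilde u^{(1)}$ as an expression whose pairing against the soft mode $\phi_1^\varepsilon\sim\theta'$ vanishes up to exponentially small corrections, so that this integral reduces to a quadratic form in $v^\perp$ controllable by $\lambda_2^\varepsilon\int(v^\perp)^2dz$. Any leftover $O(1)\int v^2$ remainders are handled by the identity $\int v^2=-\int\nabla v\cdot\nabla w$ and Young's inequality, $\int v^2\leq\eta\varepsilon\int|\nabla v|^2+C_\eta\varepsilon^{-1}\int|\nabla w|^2$, choosing $\eta$ small. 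The main obstacle, in my view, will be the soft-mode step: replacing the weaker $-C\int v^2$ right-hand side appearing in the formulation recalled in the introduction with the sharper $-C\int|\nabla w|^2$ hinges on a careful bookkeeping of how $\alpha(s)$ couples to $w$ via $\Delta w=v$ and on the correct $x$-scale normalization of $\phi_1^\varepsilon$; by contrast, the gap estimate on $v^\perp$ and the cancellation produced by Lemma \ref{u1:rewrite} are mechanical once the eigenstructure of $\mathcal{L}_f$ and the structural lemma are in hand.
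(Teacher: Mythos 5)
Your overall architecture for the bulk of the argument coincides with the paper's: localize to the transition layer, pass to curvilinear coordinates with the stretched normal variable, expand $f''(u^A)=f''(\theta)+\varepsilon f'''(\theta)\widetilde u^{(1)}+O(\varepsilon^2)$, perform a fiberwise decomposition of $v$ along the first Neumann eigenfunction of $\mathcal{L}_f$, use the exponential smallness of $\lambda_1^f$ and the gap $\lambda_2^f\geq c_f$ from Proposition \ref{Neumann eigenvalue}, and kill the $O(\varepsilon^{-1})$ term $\int f'''(\theta)\widetilde u^{(1)}v^2$ via the structural cancellation of Lemma \ref{u1:rewrite} against $(\theta')^2$ together with $q_1^f\approx\alpha\theta'$. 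That is exactly the paper's proof of Lemma \ref{spectral lemma}, i.e.\ of the weaker bound (\ref{spectral-0}) with right-hand side $-C\int_\Omega v^2\,dx$. (You omit the Jacobian bookkeeping --- the paper works with $\hat v=vJ^{1/2}$ and the appendix estimate (\ref{Jacobic transform}) --- but that is a technical point.)

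The genuine gap is in your final step, the upgrade from $-C\int v^2$ to $-C\int|\nabla w|^2$. Your proposed device, $\int v^2=-\int\nabla v\cdot\nabla w\leq\eta\varepsilon\int|\nabla v|^2+C_\eta\varepsilon^{-1}\int|\nabla w|^2$, leaves a factor $\varepsilon^{-1}$ in front of $\int|\nabla w|^2$ after absorption, so it proves only $\int(\varepsilon|\nabla v|^2+\varepsilon^{-1}f''(u^A)v^2)\geq-C\varepsilon^{-1}\int|\nabla w|^2$, which is strictly weaker than (\ref{spectral}) and would not survive the Gronwall argument in the error estimates. Moreover, the uniform inequality $\varepsilon\int v^2\leq C\int|\nabla w|^2$ is \emph{false} for general $v$ (take $v$ highly oscillatory, so that $\|v\|_{H^{-1}}\ll\|v\|_{L^2}$); it can only hold for $v$ in the negative cone of the quadratic form, where $v$ is forced to concentrate on the soft mode $\theta'(d/\varepsilon)$, whose $L^2$ and $H^{-1}$ norms scale as $\sqrt\varepsilon$ and $\varepsilon$ respectively. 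The paper handles this by a dichotomy: if the form is nonnegative, (\ref{spectral}) is trivial; if it is negative, Theorem~3.1 of \cite{C} applies and yields precisely $-\varepsilon\int v^2\geq-C\int|\nabla w|^2$, which combined with (\ref{spectral-0}) gives the theorem. Your sketch of testing $\Delta w=v$ against $\alpha\phi_1^\varepsilon$ is a step toward reproving Chen's result, but as written it neither invokes the negativity hypothesis nor controls the coupling between the soft mode and $v^\perp$, and you yourself flag it as unresolved. Without either citing \cite{C} or carrying out that concentration argument in full, the proof of (\ref{spectral}) with an $\varepsilon$-independent constant does not close.
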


Thanks to Theorem 3.1 in \cite{C}  we only need to prove the following lemma.
\begin{Lemma} \label{spectral lemma}
There exist two positive constants $\varepsilon_0$ and $C$ such that for any $0<\varepsilon<\varepsilon_0$ and $v\in H^1(\Omega)$  there holds
\begin{align}
\int_{\Omega}\bigg(|\nabla v|^2+\frac{1}{\varepsilon^2}f''(u^A)v^2\bigg)dx\geq -C\int_{\Omega}v^2dx.\label{spectral-0}
\end{align}
\end{Lemma}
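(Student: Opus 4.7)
The strategy is to localize to the transition layer, where the term $\frac{1}{\varepsilon^2} f''(u^A) v^2$ is singular, and to exploit the Neumann spectral structure of the stretched operator $\mathcal{L}_f$ from (\ref{operator:f}). First I split $\Omega = \Omega^{\mathrm{out}} \cup \Gamma(\delta)$. On $\Omega^{\mathrm{out}} := \Omega \setminus \Gamma(\delta)$, the construction of the approximate solution forces $u^A$ to be uniformly close to $\pm 1$, so $f''(u^A) \geq c_0 > 0$ for small $\varepsilon$; both the gradient term and the potential term are then non-negative and can simply be dropped. All the real work is therefore on $\Gamma(\delta)$.

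On $\Gamma(\delta)$ I switch to curvilinear coordinates $(s, d)$ adapted to $\Gamma$ and rescale $z = d/\varepsilon \in I_\varepsilon$; the Jacobian is smooth and uniformly bounded. Taylor-expanding via (\ref{profile of app}) yields
\[
\frac{1}{\varepsilon^2} f''(u^A) = \frac{1}{\varepsilon^2} f''(\theta(z)) + \frac{1}{\varepsilon} f'''(\theta(z)) \widetilde{u}^{(1)} + O(1).
\]
For each fixed $s$, the spectral estimate for $\mathcal{L}_f$ from \cite{C} gives that the first Neumann eigenvalue $\lambda_1$ on $I_\varepsilon$ is exponentially small in $\varepsilon$, the first eigenfunction $q_1$ is essentially $\theta'/\|\theta'\|_{L^2(I_\varepsilon)}$, and the second eigenvalue satisfies $\lambda_2 \geq \lambda_\ast > 0$. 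Decomposing $v(s, \cdot) = \alpha(s) q_1 + v^\perp(s, \cdot)$ with $v^\perp \perp q_1$ in $L^2(I_\varepsilon)$, the leading slice-wise quadratic form is bounded below by $\lambda_\ast \| v^\perp \|^2_{L^2(I_\varepsilon)} - C e^{-\nu/\varepsilon} \alpha(s)^2$.

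The genuinely delicate step is the $\varepsilon^{-1}$ correction $\frac{1}{\varepsilon} \int_{\Gamma(\delta)} f'''(\theta) \widetilde{u}^{(1)} v^2 \, dx$. Writing $v^2 = \alpha^2 q_1^2 + 2 \alpha q_1 v^\perp + (v^\perp)^2$, the cross and square terms in $v^\perp$ are absorbed by Cauchy--Schwarz against the coercive piece $\lambda_\ast \int (v^\perp)^2 \, dz$, producing only errors of size $O(1) \int v^2 \, dx$. The truly dangerous contribution is
\[
\frac{1}{\varepsilon} \int_\Gamma \alpha(s)^2 \left( \int_{I_\varepsilon} f'''(\theta(z)) \, \widetilde{u}^{(1)} \, q_1(z)^2 \, dz \right) ds,
\]
and it is here that the structural identity of $\widetilde{u}^{(1)}$ stated in Lemma \ref{u1:rewrite} enters: one rewrites $f'''(\theta) \widetilde{u}^{(1)} (\theta')^2$ as a total $z$-derivative (or as $\frac{1}{2} \partial_z [f''(\theta) \widetilde{u}^{(1)} \cdot \text{(something)}]$) so that, after integration by parts in $z$, the inner integral either vanishes or gains a factor $\varepsilon$, cleanly removing the $\varepsilon^{-1}$ singularity.

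The main obstacle I expect is precisely this last cancellation: translating the formula of Lemma \ref{u1:rewrite} --- which encodes how $\widetilde{u}^{(1)}$ is built from $\theta'$, $\widetilde{\mu}^{(0)}$ and the geometric quantities $\Delta_x d^{(0)}$, $d^{(1)}$ --- into an algebraic removal of the $\varepsilon^{-1}$ singularity, uniformly in $s$. Once that cancellation is secured, assembling the slice-wise estimates, handling the smooth cut-off between $\Omega^{\mathrm{out}}$ and $\Gamma(\delta)$, and absorbing the Jacobian and $O(1)$ Taylor remainders into $-C \int_\Omega v^2 \, dx$ yields (\ref{spectral-0}).
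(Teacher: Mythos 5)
Your proposal follows essentially the same route as the paper's proof: drop the region where $f''(u^A)\geq c_0>0$, pass to stretched normal coordinates in the transition layer, expand $f''(u^A)$ around $f''(\theta(z))$, decompose along the first Neumann eigenfunction of $\mathcal{L}_f$ using the eigenvalue/eigenfunction estimates of Proposition \ref{Neumann eigenvalue}, absorb the cross terms into the coercive second-eigenvalue piece, and remove the $\varepsilon^{-1}$ singularity via Lemma \ref{u1:rewrite}. The only (cosmetic) difference is in your description of the key cancellation: it does not come from writing $f'''(\theta)\widetilde{u}^{(1)}(\theta')^2$ as a total $z$-derivative, but directly from the decomposition $\widetilde{u}^{(1)}=\bar{p}\,\theta_1+\bar{q}$ with the orthogonality $\int_{\mathbb{R}}f'''(\theta)\theta_1(\theta')^2\,dz=0$ (up to exponentially small tails) and the pointwise bound $|\bar{q}|\leq C\varepsilon(1+|z|)$ against the decaying weight $(\theta')^2$ --- which is precisely the ``vanishes or gains a factor $\varepsilon$'' dichotomy you anticipated.
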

In fact, if
\begin{align*}
\int_{\Omega}\bigg(|\nabla v|^2+\frac{1}{\varepsilon^2}f''(u^A)v^2\bigg)dx\geq 0,
\end{align*}
then (\ref{spectral}) holds obviously. If
\begin{align*}
\int_{\Omega}\bigg(|\nabla v|^2+\frac{1}{\varepsilon^2}f''(u^A)v^2\bigg)dx\leq 0,
\end{align*}
and $w\in H^2(\Omega)$ with $\Delta w=v$, then the assumptions of  Theorem 3.1 in \cite{C} hold, hence we have
\begin{align*}
-\varepsilon\int_{\Omega}v^2dx\geq-C\int_{\Omega}|\nabla w|^2dx
\end{align*}
which and (\ref{spectral-0}) immediately imply (\ref{spectral})

Theorem \ref{spectral theorem}  and Lemma  \ref{spectral lemma}  have been used to prove the convergence of the Cahn-Hilliard equation to the Hele-Shaw model in \cite{ABC}. However the proof of Lemma  \ref{spectral lemma} were   established in another paper \cite{C}. Here we write the details in
a concise way which will show clearly how to vanish the singularity.

In order to prove Lemma  \ref{spectral lemma} we show the following proposition on the spectral analyses of  Neumann eigenvalue problem   (\ref{operator:f}). The proof has been given in \cite{C} and proved by a new method in \cite{FWZZ}.
\begin{Proposition}(\cite{C})\label{Neumann eigenvalue}
(1)(Estimate of the first eigenvalue of $\mathcal{L}_f$)
\begin{align}
\lambda^f_1\triangleq \inf_{\|q\|=1}\int_{I_{\ve}}\Big(\big(q')^2+f''(\theta)q^2\Big)dz=O(e^{-\frac{C}{\ve}})
,\label{eigen:f-1}
\end{align}
here $C$ is a positive constant independent of small $\ve$ and  $\|q\|=\big(\int_{I_{\ve}} q^2(z)dz\big)^{\frac{1}{2}}$.

(2)(Estimate of the second eigenvalue of $\mathcal{L}_f$)
\begin{align}
\lambda^f_2\triangleq \inf_{\|q\|=1,q\bot q^f_1}\int_{I_{\ve}}\Big(\big(q')^2+f''(\theta)q^2\Big)dz\geq c_f>0
,\label{eigen:f-2}
\end{align}
here $q\bot q^f_1\Leftrightarrow \int_{I_{\ve}} qq^f_1dz=0$, $q^f_1$ is the normalized eigenfunction corresponding to $\lambda^f_1$ and $c_f$ is a positive constant independent of small $\ve$.

(3)(Characterization of the first normalized eigenfunction of $\mathcal{L}_f$)
\begin{align}
\|q^f_1-\alpha \theta'\|^2=O(e^{-\frac{C}{\ve}}),\label{eigenfunc:f}
\end{align}
here $\alpha=\frac{1}{\| \theta'\|}$.
\end{Proposition}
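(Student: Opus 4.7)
The plan rests on two well-known facts about the full-line operator $\mathcal{L}_f^\infty := -d^2/dz^2 + f''(\theta)$ on $\mathbb{R}$. First, differentiating the profile equation $\theta'' = f'(\theta)$ gives $\mathcal{L}_f^\infty \theta' = 0$; since $\theta' > 0$, the function $\theta'$ is the ground state with eigenvalue zero. Second, the operator has a strictly positive spectral gap, i.e., its $L^2(\mathbb{R})$ spectrum consists of $\{0\}\cup\Sigma$ with $\inf\Sigma > 0$; this follows from the factorization $\mathcal{L}_f^\infty = A^*A$ with $A = \partial_z - (\theta''/\theta')$, which makes the operator manifestly non-negative, together with a standard absence-of-embedded-eigenvalue argument. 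The whole strategy is to transfer both facts from $\mathbb{R}$ to $I_\varepsilon = (-1/\varepsilon,1/\varepsilon)$ with only exponentially small corrections, powered by the decay $|\theta'(z)|,|\theta''(z)|\lesssim e^{-\sqrt{2}|z|}$.

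For item (1), I would plug $\tilde q = \theta'/\|\theta'\|_{L^2(I_\varepsilon)}$ into the Rayleigh quotient. Integration by parts together with $\mathcal{L}_f^\infty \theta'=0$ yields
\begin{equation*}
\int_{I_\varepsilon}\!\bigl((\tilde q')^2 + f''(\theta)\tilde q^2\bigr)\,dz \;=\; \frac{[\theta'\theta'']_{-1/\varepsilon}^{1/\varepsilon}}{\|\theta'\|^2_{L^2(I_\varepsilon)}} \;=\; O(e^{-C/\varepsilon}),
\end{equation*}
which gives the upper bound $\lambda_1^f \leq C e^{-C/\varepsilon}$. A matching lower bound will follow once (2) and (3) are in hand: writing $q_1^f = \alpha\theta' + r$ with $r$ exponentially small and essentially orthogonal to $\theta'$, the expansion $\lambda_1^f = \langle q_1^f, \mathcal{L}_f q_1^f\rangle$ contains a vanishing principal term plus boundary and cross contributions that are all $O(e^{-C/\varepsilon})$.

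For (2), I would argue by contradiction: were there a sequence $q_n$ with $\|q_n\|=1$, $q_n\perp q_1^{f,\varepsilon_n}$, and $\int_{I_{\varepsilon_n}}[(q_n')^2 + f''(\theta)q_n^2]\to 0$, then after extension by zero (or even reflection) to $\mathbb{R}$ and subtracting the small projection onto $\theta'$, the resulting family would contradict the full-line spectral gap. For (3), decompose $q_1^f = c_\varepsilon\,\alpha\theta' + q_1^{f,\perp}$ with $q_1^{f,\perp}\perp \theta'$ in $L^2(I_\varepsilon)$. Pairing $\mathcal{L}_f q_1^f = \lambda_1^f q_1^f$ with $q_1^{f,\perp}$ and invoking the gap (2) forces
\begin{equation*}
\|q_1^{f,\perp}\|^2 \;\leq\; \frac{|\lambda_1^f|\, + O(e^{-C/\varepsilon})}{c_f} \;=\; O(e^{-C/\varepsilon}),
\end{equation*}
after which the normalization $\|q_1^f\|=1$ pins $c_\varepsilon\to 1$ with the same exponential error.

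The main obstacle is the lower bound on $\lambda_1^f$, equivalently the clean transfer of the full-line gap to $I_\varepsilon$ under Neumann conditions. The delicate point is that $\theta'$ does not satisfy the Neumann conditions exactly at $z=\pm 1/\varepsilon$, so one must either correct it by an exponentially small boundary-layer modification or use a cutoff-and-partition (IMS-type) scheme that decouples bulk and boundary contributions, all the while ensuring the cost of localization stays within the $O(e^{-C/\varepsilon})$ error budget.
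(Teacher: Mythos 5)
The paper itself offers no proof of Proposition \ref{Neumann eigenvalue}: it is imported verbatim from \cite{C} (re-proved in \cite{FWZZ}), so there is no in-paper argument to compare yours against; what follows assesses your sketch on its own terms. Your overall strategy --- transferring the spectral structure of the full-line operator $-\frac{d^2}{dz^2}+f''(\theta)$ (ground state $\theta'$ with eigenvalue $0$ via $\mathcal{L}_f^\infty\theta'=0$, positive gap above it via the factorization $A^*A$) to $I_{\ve}$ with exponentially small errors --- is the right one, and your upper bound $\lambda_1^f\leq Ce^{-C/\ve}$ via the test function $\theta'/\|\theta'\|_{L^2(I_\ve)}$ is correct and essentially complete, since the Neumann infimum is taken over all of $H^1(I_\ve)$. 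The logical ordering of the remaining steps, however, is circular: your proof of (2) subtracts ``the small projection onto $\theta'$'' from $q_n$, but the only reason that projection is small is that $q_n\perp q_1^f$ and $q_1^f$ is exponentially close to $\alpha\theta'$ --- which is (3), whose proof you in turn base on (2); and the lower bound in (1) is then deferred to (2) and (3). The standard repair is to prove first, with no reference to $q_1^f$, a uniform gap on the orthogonal complement of $\theta'$ itself, namely $\inf\big\{\int_{I_\ve}\big((q')^2+f''(\theta)q^2\big)dz:\|q\|=1,\ \int_{I_\ve}q\,\theta'\,dz=0\big\}\geq c>0$; all three statements then follow from this together with your test-function computation.

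Even after that reordering, two steps you wave at would need real work. First, extension by zero does not map $H^1(I_\ve)$ into $H^1(\mathbb{R})$; you must cut off (or reflect) and control the localization cost, which is possible only because $f''(\theta)\geq c_0>0$ near $z=\pm\frac{1}{\ve}$ forces any function with small Rayleigh quotient to carry exponentially small mass and $H^1$-norm there. That coercivity-near-the-endpoints estimate is the quantitative heart of the transfer (it is also what makes the Neumann boundary terms such as $[gq^2]_{-1/\ve}^{1/\ve}$ harmless in the lower bound for $\lambda_1^f$), and it appears nowhere in your sketch. Second, in the compactness argument for the gap you must exclude the minimizing sequence losing all its mass to the region where $f''(\theta)$ is uniformly positive, again using $f''(\pm1)>0$. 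Neither obstacle is fatal, but as written the proposal records the conclusions of the hard steps rather than their proofs, and your own closing paragraph correctly identifies that this is where the entire difficulty lies.
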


Let
\begin{align*}
d^{[k]}(x,t)=d^{(0)}(x,t)+\varepsilon d^{(1)}(x,t)+\varepsilon^2 d^{(2)}(x,t)+\cdots+\varepsilon^k d^{(k)}(x,t),
\end{align*}
then $d^{[k]}(x,t)(k\geq1)$ is a $k$-th approximate of the signed distance  from $x$ to the interface $\Gamma_k^\ve\triangleq\{(x,t):d^{[k]}(x,t)=0\}$ in the following sense.

\begin{Lemma}\label{approximate of the signed distance}For every fixed $t\in[0,T]$, let $r_t(x)$  be the signed distance from $x$ to $\Gamma_k^\ve$. Then  for small $\varepsilon$
\begin{align}
\big\|r_t(x)-d^{[k]}(x,t)\big\|_{C^1(\Gamma_t(\delta))}=O(\varepsilon^{k+1}).\label{approximate distance}
\end{align}
\end{Lemma}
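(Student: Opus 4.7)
The plan is to exploit the fact that the recursive choice (\ref{equ:gradient d}) of the correctors $d^{(i)}$ makes $d^{[k]}$ an \emph{approximate} signed distance function, in the sense that the eikonal identity $|\nabla d^{[k]}|^2 = 1$ holds modulo $O(\varepsilon^{k+1})$. Once this is established, I would compare $d^{[k]}$ with the true signed distance $r_t$ to its own zero set $\Gamma_k^\varepsilon$ by parameterising a tubular neighbourhood of $\Gamma_k^\varepsilon$ via the gradient flow of $d^{[k]}$, along which $\nabla d^{[k]}$ is nearly constant.

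First I would expand $|\nabla d^{[k]}|^2 = \sum_{0\le i,j\le k}\varepsilon^{i+j}\nabla d^{(i)}\cdot\nabla d^{(j)}$ and collect powers of $\varepsilon$. The $\varepsilon^0$-coefficient is $|\nabla d^{(0)}|^2 = 1$, while for every $1\le m\le k$ the $\varepsilon^m$-coefficient equals $2\nabla d^{(0)}\cdot\nabla d^{(m)} + \sum_{i=1}^{m-1}\nabla d^{(i)}\cdot\nabla d^{(m-i)}$, which vanishes identically by the very relation (\ref{equ:gradient d}) defining $d^{(m)}$. Only terms with $k+1\le i+j\le 2k$ survive, so $|\nabla d^{[k]}|^2 = 1 + R_k$ with $\|R_k\|_{C^{m}(\Gamma_t(\delta))} = O(\varepsilon^{k+1})$ for any fixed $m$. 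Differentiating gives the decisive corollary $D^2 d^{[k]}\cdot\nabla d^{[k]} = \tfrac{1}{2}\nabla R_k = O(\varepsilon^{k+1})$, i.e.\ $\nabla d^{[k]}$ is an approximate null direction of the Hessian.

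Since $d^{[k]} - d^{(0)} = O(\varepsilon)$ in $C^m$, for small $\varepsilon$ the zero set $\Gamma_k^\varepsilon$ is a smooth closed hypersurface $O(\varepsilon)$-close to $\Gamma$ with a smooth nearest-point projection $\pi$ defined on a neighbourhood containing $\Gamma_t(\delta)$. For $y\in\Gamma_k^\varepsilon$ I would flow along the curve
$$\dot\gamma_y(s) = \frac{\nabla d^{[k]}(\gamma_y(s))}{|\nabla d^{[k]}(\gamma_y(s))|^2},\qquad \gamma_y(0) = y.$$
Then $\tfrac{d}{ds}d^{[k]}(\gamma_y) \equiv 1$ yields $d^{[k]}(\gamma_y(s)) = s$, while $\tfrac{d}{ds}\nabla d^{[k]}(\gamma_y) = |\nabla d^{[k]}|^{-2} D^2 d^{[k]}\cdot\nabla d^{[k]} = O(\varepsilon^{k+1})$ yields $\nabla d^{[k]}(\gamma_y(s)) = \nabla d^{[k]}(y) + O(s\varepsilon^{k+1})$. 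For small $\varepsilon$ the map $(y,s)\mapsto\gamma_y(s)$ is a diffeomorphism whose image contains $\Gamma_t(\delta)$, so every $x\in\Gamma_t(\delta)$ has the form $x = \gamma_y(s)$.

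To close the argument, write $x = \gamma_y(s)$. Because $|\nabla d^{[k]}| \le 1+O(\varepsilon^{k+1})$ globally, $d^{[k]}$ is $(1+O(\varepsilon^{k+1}))$-Lipschitz, so $|s| = |d^{[k]}(x)| \le (1+O(\varepsilon^{k+1}))r_t(x)$; conversely $r_t(x) \le |\gamma_y(s) - y| \le \int_0^s|\dot\gamma_y|\,d\tau = s(1+O(\varepsilon^{k+1}))$. Combining gives the $C^0$-estimate $d^{[k]}(x) = r_t(x) + O(\varepsilon^{k+1})$. For the gradient estimate, integrating $\dot\gamma_y = \nu_k(y) + O(\varepsilon^{k+1})$ shows $x - y = s\nu_k(y) + O(s\varepsilon^{k+1})$, hence $\pi(x) = y + O(\varepsilon^{k+1})$ and $\nabla r_t(x) = \nu_k(\pi(x)) = \nu_k(y) + O(\varepsilon^{k+1})$; but also $\nabla d^{[k]}(x) = |\nabla d^{[k]}(y)|\,\nu_k(y) + O(\varepsilon^{k+1}) = \nu_k(y) + O(\varepsilon^{k+1})$, whence $\nabla d^{[k]}(x) - \nabla r_t(x) = O(\varepsilon^{k+1})$. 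The main obstacle --- and the reason for flowing along $\gamma_y$ rather than along a straight normal segment from $y$ --- is that on a straight normal line the second directional derivative $\nu_k(y)^T D^2 d^{[k]}\,\nu_k(y)$ is only $O(\varepsilon^{k+1})$ at $y$ itself and degrades linearly away from it, whereas the identity $D^2 d^{[k]}\cdot\nabla d^{[k]} = O(\varepsilon^{k+1})$ translates \emph{uniformly} into $\tfrac{d}{ds}\nabla d^{[k]}(\gamma_y) = O(\varepsilon^{k+1})$ along the gradient-flow curves, which is what drives both estimates.
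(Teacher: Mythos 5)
Your proof is correct, but it takes a genuinely different route from the paper's. The paper's argument is a short two-step computation: from $|\nabla d^{[k]}|^2=1+O(\varepsilon^{k+1})$ it gets $|\nabla d^{[k]}|-1=O(\varepsilon^{k+1})$, then asserts that $\nabla r_t$ is parallel to $\nabla d^{[k]}$, so that
\begin{align*}
|\nabla r_t-\nabla d^{[k]}|^2=1-2|\nabla d^{[k]}|+|\nabla d^{[k]}|^2=\big(1-|\nabla d^{[k]}|\big)^2=O(\varepsilon^{2(k+1)}),
\end{align*}
and finally integrates this gradient bound along the straight segment joining $x$ to a point $x_0\in\Gamma_k^\varepsilon$ to obtain the $C^0$ estimate. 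You instead differentiate the approximate eikonal identity once more to get $D^2d^{[k]}\cdot\nabla d^{[k]}=\frac12\nabla R_k=O(\varepsilon^{k+1})$ and propagate $\nabla d^{[k]}$ along its own gradient-flow curves, extracting both the $C^0$ and the $C^1$ bounds from the resulting tubular-neighbourhood parametrisation. What each approach buys: the paper's is far shorter, but its parallelism claim $\nabla r_t\parallel\nabla d^{[k]}$ is exact only on $\Gamma_k^\varepsilon$ itself --- off the interface the level sets of $d^{[k]}$ need not coincide with the parallel surfaces of $\Gamma_k^\varepsilon$, so the pointwise identity for $|\nabla r_t-\nabla d^{[k]}|$ really carries a hidden error of the same $O(\varepsilon^{k+1})$ order (harmless for the conclusion, but implicit). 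Your gradient-flow construction makes that geometry explicit, avoids the parallelism assertion altogether, and produces the gradient estimate directly via the projection map rather than as the starting point. Both proofs ultimately rest on the same single input, namely that the recursion (\ref{equ:gradient d}) annihilates every coefficient of $\varepsilon^m$, $1\le m\le k$, in $|\nabla d^{[k]}|^2$.
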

\begin{proof}
  Noting that $|\nabla d^{[k]}|^2=1+O(\varepsilon^{k+1})$, then for small $\varepsilon$ one gets
  \begin{align*}
|\nabla d^{[k]}|-1=\frac{|\nabla d^{[k]}|^2-1}{|\nabla d^{[k]}|+1}=O(\varepsilon^{k+1}).
\end{align*}

Since $r_t(x)$  is the signed distance, then $|\nabla r_t|=1$ and $\nabla r_t$ is parallel to $\nabla d^{[k]}$. Consequently, we obtain
\begin{align*}
|\nabla r_t(x)-\nabla d^{[k]}(x,t)|^2&=|\nabla r_t(x)|^2-2\nabla d^{[k]}(x,t)\cdot\nabla r_t(x)+|\nabla d^{[k]}(x,t)|^2
\\&=1-2|\nabla d^{[k]}(x,t)|+|\nabla d^{[k]}(x,t)|^2
\\&=\big(1-|\nabla d^{[k]}(x,t)|\big)^2
\\&=O(\varepsilon^{2(k+1)}).
\end{align*}

Choosing $x_0\in\Gamma_k^\ve$, i.e., $r_t(x_0)=d^{[k]}(x_0,t)=0$, then
\begin{align*}
|r_t(x)-d^{[k]}(x,t)|&=|r_t(x)-d^{[k]}(x,t)-r_t(x_0)+d^{[k]}(x_0,t)|
\\&=\bigg|\int_0^1\Big(\nabla r_t(t'x+(1-t')x_0)-\nabla d^{[k]}\big((t'x+(1-t')x_0),t\big)\Big)\cdot(x-x_0)dt'\Big|
\\&\leq C\varepsilon^{k+1}.
\end{align*}

Hence we complete the proof of this lemma.
\end{proof}

Let $s_t(x)$ be the projection of $x$ on $\Gamma_k^\ve$ along the normal of $\Gamma_k^\ve$. Then the transformation $x\longmapsto(r_t(x),s_t(x))$ is a diffeomorphism in $\Gamma_k^\ve(\delta)$ for small $\delta$. Let $J(r_t,s_t)=\det\frac{\partial x^{-1}(r_t,s_t)}{\partial(r_t,s_t)}$ be the Jacobian of the transformation, then $J|_{\Gamma_k^\ve}=1$ and $\frac{\partial J}{\partial r_t}|_{\Gamma_k^\ve}=0.$ Thus
\begin{align}
0<C_1\leq J(r_t,s_t)\leq C_2,\ \ \ \bigg|J_{r_t}(r_t,s_t)\triangleq\frac{\partial J}{\partial r_t}(r_t,s_t)\bigg|\leq C\big|r_t\big|.\label{Jacobic derivative}
\end{align}

In view of (\ref{in-1-3}) and the similar arguments in P199 in \cite{ABC} we obtain
\begin{Lemma}\label{u1:rewrite}
In $\Gamma(\delta)$, $\widetilde{u}^{(1)}$ can be expressed as
\begin{align*}
&\widetilde{u}^{(1)}(x,t,z)\bigg|_{z=\frac{r_t(x)}{\varepsilon}}=\bar{p}(s_t(x))\theta_1\bigg(\frac{r_t(x)}{\varepsilon}\bigg)+\bar{q}(x)=\bar{p}(s_t(x))\theta_1(z)+\bar{q}(x),
\end{align*}
where $\theta_1\in L^\infty(\mathbb{R})$, $\bar{p}\in L^\infty(\Gamma(\delta))$ and
\begin{align*}
\int_{-\infty}^{+\infty}f'''\big(\theta(z)\big)\theta_1(z)\big(\theta'(z)\big)^2dz=0,\ \ \ \big|\bar{q}(x)\big|\leq C(\varepsilon+\big|r_t(x)\big|)\leq C\varepsilon (1+|z|).
\end{align*}
\end{Lemma}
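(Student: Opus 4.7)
The plan is to start from the explicit representation of $\widetilde u^{(1)}$ (derived in the preceding section as the solution of (\ref{in-1-3}) with $\widetilde u^{(1)}(x,t,0)=1$) and identify its ``frozen'' profile on $\Gamma$. Specifically, I would freeze the $x$-dependent coefficients of the source
$$\Theta_{0,3}(x,t,\tau)=\theta'(\tau)\Delta_x d^{(0)}+\widetilde\mu^{(0)}-\eta'(\tau)l^{(0)}d^{(0)}$$
at the projection $s_t(x)\in\Gamma$ and show that (i) the resulting source factorizes as a universal $z$-profile times the scalar amplitude $\kappa(s_t)$, and (ii) the discrepancy from using $x$ in place of $s_t$ is controlled by the normal distance $|r_t(x)|$.

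The frozen source simplifies dramatically. Since $d^{(0)}(s_t,t)=0$, the jump $[\mu^{(0)}]|_\Gamma=0$ from (\ref{jump condition:mu0}) gives $\widetilde\mu^{(0)}(s_t,t,\tau)=\mu^{(0)}|_\Gamma(s_t)$ independent of $\tau$, and the curvature compatibility $\mu^{(0)}|_\Gamma=\kappa\int_{-1}^{1}\sqrt{2f(u)}\,du$ from (\ref{curvature condition}) expresses $\mu^{(0)}|_\Gamma$ as $\kappa(s_t)$ times a universal constant. One obtains $\Theta_{0,3}(s_t,t,\tau)=\kappa(s_t,t)\,h_0(\tau)$ for an even universal profile $h_0$ satisfying $\int_{-\infty}^{+\infty}h_0(\tau)\theta'(\tau)\,d\tau=0$ (this is precisely the projected solvability (\ref{in-1-solving condition-2}) on $\Gamma$). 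Inserting the frozen source back into the integral formula for $\widetilde u^{(1)}$, together with the universal boundary piece $\theta'(z)/\theta'(0)$, produces a bounded $z$-profile $\theta_1$ and an amplitude $\bar p(s_t)$ related to $\kappa(s_t)$. The key property is that $\theta_1$ is even in $z$: both $\theta'$ (since $\theta$ is odd) and the bounded solution $\Phi$ of $\mathcal L_f\Phi=h_0$ normalized by $\Phi(0)=0$ are even, because $\mathcal L_f=-\partial_{zz}+f''(\theta)$ preserves parity (note $f''(\theta)=12\theta^2-4$ is even in $z$). The required orthogonality then follows by a parity argument: $f'''(\theta)=24\theta$ is odd in $z$, $(\theta')^2$ is even, and $\theta_1$ is even, so the integrand $f'''(\theta)\theta_1(\theta')^2$ is odd and $\int_{-\infty}^{+\infty}f'''(\theta)\theta_1(\theta')^2\,dz=0$.

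To bound the remainder $\bar q(x)=\widetilde u^{(1)}(x,t,r_t/\varepsilon)-\bar p(s_t)\theta_1(r_t/\varepsilon)$, I would Taylor-expand the smooth coefficients $\Delta_x d^{(0)},\widetilde\mu^{(0)},l^{(0)}d^{(0)}$ in the normal direction to obtain $\Theta_{0,3}(x,t,\tau)-\Theta_{0,3}(s_t,t,\tau)=O(|r_t(x)|)$, then propagate this through the double integral defining $\widetilde u^{(1)}$ using the exponential decay of $\theta'$ together with (\ref{Jacobic derivative}); the remaining exponentially small matching residues contribute $O(\varepsilon)$. Altogether this yields $|\bar q(x)|\le C(\varepsilon+|r_t(x)|)$, equivalently $C\varepsilon(1+|z|)$ in the stretched variable.

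The main obstacle is fitting the frozen expression $\kappa(s_t)\Phi(z)+\theta'(z)/\theta'(0)$ into the clean tensor form $\bar p(s_t)\theta_1(z)$: since $\kappa$ is non-constant along $\Gamma$ and $\Phi$ is not proportional to $\theta'$, this is not literally a product, so the non-factorizable component must be carefully absorbed into $\bar q$ without spoiling the bound $C(\varepsilon+|r_t|)$. This reduction, which is the content of the referenced construction on p.~199 of \cite{ABC}, preserves the evenness of $\theta_1$ (any bounded linear combination of the even functions $\Phi$ and $\theta'$ remains even) and hence the orthogonality.
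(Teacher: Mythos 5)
The paper does not actually prove this lemma---it simply invokes ``the similar arguments in P.\ 199 of \cite{ABC}''---so there is no internal proof to compare against; judged on its own terms, your reconstruction contains the right ingredients and is essentially the intended argument: the explicit variation-of-constants formula for $\widetilde{u}^{(1)}$, freezing the coefficients of $\Theta_{0,3}$ at the foot point $s_t(x)$ (where $d^{(0)}=0$ kills the $\eta'l^{(0)}d^{(0)}$ term and makes $\widetilde{\mu}^{(0)}$ constant in $z$ by $[\mu^{(0)}]=0$), the parity argument ($\theta$ odd, hence $f''(\theta)$ and $\theta'$ even, $f'''(\theta)=24\theta$ odd, and $\mathcal{L}_f$ parity-preserving) to get the orthogonality, and a normal-direction Taylor expansion for the remainder (note that the perturbed source still satisfies the solvability condition $\int\delta\Theta_{0,3}\,\theta'\,dz=0$, which is what keeps the double integral from growing exponentially and yields the $C\varepsilon(1+|z|)$ bound).

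The one step of your plan that would fail as literally stated is ``absorbing the non-factorizable component into $\bar{q}$'': the frozen profile is $\theta'(z)/\theta'(0)+\kappa(s_t)\Phi(z)$, and the homogeneous piece $\theta'(z)/\theta'(0)$ equals $1$ at $z=0$, so it can never be hidden inside a remainder obeying $|\bar{q}|\le C(\varepsilon+|r_t|)$. The correct resolution is the one you already state in passing: both $\theta'$ and $\Phi$ are even in $z$, so each is separately orthogonal to $f'''(\theta)(\theta')^2$, and the only property of the decomposition that is ever used (in Step 3 of the proof of Lemma \ref{spectral lemma}) is $\int_{-\infty}^{+\infty}f'''(\theta(z))\,U(s_t,z)\,(\theta'(z))^2dz=0$ for the frozen profile $U(s_t,z)$. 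Thus the rank-one form $\bar{p}(s_t)\theta_1(z)$ in the statement should be read as shorthand for a finite sum $\sum_j\bar{p}_j(s_t)\theta_{1,j}(z)$ with each $\theta_{1,j}$ even; with that reading your argument closes, and nothing downstream is affected.
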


Now we focus on the proof of Lemma \ref{spectral lemma}.

{\it Proof of Lemma \ref{spectral lemma}.}
For clarity we divide into three steps to proceed.

\underline{Step 1.}
Noting that $f''(\pm 1)>0$, then for small $\varepsilon$ there holds
\begin{align*}
&\int_{\Omega}\bigg(|\nabla v|^2+\frac{1}{\varepsilon^2}f''(u^A)v^2\bigg)dx
\\&\geq\int_{\Gamma_k^\ve(\delta)}\bigg(|\nabla v|^2+\frac{1}{\varepsilon^2}f''(u^A)v^2\bigg)dx
\\&=\varepsilon^{-2}\int_{\Gamma_k^\ve}\int_{-\frac{\delta}{\varepsilon}}^{\frac{\delta}{\varepsilon}}\Big(|\partial_z v|^2+f''(u^A)v^2\Big)J(r_t(x),s_t(x))dzdS
\\&\geq\varepsilon^{-2}\int_{\Gamma_k^\ve}\int_{-\frac{\delta}{\varepsilon}}^{\frac{\delta}{\varepsilon}}\Big(|\partial_z v|^2+f''\big(\theta(z)\big)v^2\Big)J(r_t(x),s_t(x))dzdS
\\&\quad+\varepsilon^{-1}\int_{\Gamma_k^\ve}\int_{-\frac{\delta}{\varepsilon}}^{\frac{\delta}{\varepsilon}} f'''\big(\theta(z)\big)\widetilde{u}^{(1)}(x,t,z)v^2J(r_t(x),s_t(x))dzdS-C\int_{\Omega}v^2dx,
\end{align*}
where we have used the fact that in $\Gamma_k^\ve(\delta)$
\begin{align*}
f''(u^A)&=f''\Big(\theta(z)+\varepsilon\widetilde{u}^{(1)}(x,t,z)+O(\varepsilon^2)\Big)\Big|_{z=\frac{d^{[k]}}{\varepsilon}}
\\&=f''\big(\theta(z)\big)+\varepsilon f'''\big(\theta(z)\big)\widetilde{u}^{(1)}(x,t,z)+O(\varepsilon^2)\Big|_{z=\frac{d^{[k]}}{\varepsilon}}
\\&=f''\big(\theta(z)\big)+\varepsilon f'''\big(\theta(z)\big)\widetilde{u}^{(1)}(x,t,z)+O(\varepsilon^2)\Big|_{z=\frac{r_t(x)}{\varepsilon}}.
\end{align*}
Set $\hat{v}=vJ^{\frac{1}{2}}$, from Lemma 5.8 in \cite{FWZZ} (we show the proof in Appendix for completeness of this paper) one gets
\begin{align}
&\int_{\Gamma_k^\ve}\int_{-\frac{\delta}{\varepsilon}}^{\frac{\delta}{\varepsilon}}\Big(|\partial_z v|^2+f''\big(\theta(z)\big)v^2\Big)J(r_t(x),s_t(x))dzdS
\nonumber\\&\geq\frac{3}{4}\int_{\Gamma_k^\ve}\int_{-\frac{\delta}{\varepsilon}}^{\frac{\delta}{\varepsilon}}\Big(|\partial_z \hat{v}|^2+f''\big(\theta(z)\big)\hat{v}^2\Big)dzdS-C\varepsilon^2\int_{\Omega}v^2dx.\label{Jacobic transform}
\end{align}

Consequently
\begin{align}
\int_{\Omega}\bigg(|\nabla v|^2+\frac{1}{\varepsilon^2}f''(u^A)v^2\bigg)dx
\geq\varepsilon^{-2}\int_{\Gamma_k^\ve}I dS+\varepsilon^{-1}\int_{\Gamma_k^\ve}IIdS-C\int_{\Omega}v^2dx,\label{modified bad terms}
\end{align}
where
\begin{align*}
I=\frac{3}{4}\int_{-\frac{\delta}{\varepsilon}}^{\frac{\delta}{\varepsilon}}\Big(|\partial_z \hat{v}|^2+f''\big(\theta(z)\big)\hat{v}^2\Big)dz,\
II=\int_{-\frac{\delta}{\varepsilon}}^{\frac{\delta}{\varepsilon}} f'''\big(\theta(z)\big)\widetilde{u}^{(1)}(x,t,z)\hat{v}^2dz.
\end{align*}

\underline{Step 2.} To deal with term $I$, we decompose  $\hat{v}=\gamma q_1^f + p_1$,  here $p_1\bot q^f_1$  and then $\|\hat{v}\|^2=\gamma^2+\|p_{1}\|^2$. Then
\begin{align*}
I&=\frac{3}{4}\int_{-\frac{\delta}{\varepsilon}}^{\frac{\delta}{\varepsilon}}\Big(|\partial_z \hat{v}|^2+f''\big(\theta(z)\big)\hat{v}^2\Big)dz
\nonumber\\&=\frac34\gamma^2\int_{-\frac{\delta}{\varepsilon}}^{\frac{\delta}{\varepsilon}}\Big(\big((q^f_{1})'\big)^2+f''\big(\theta(z)\big)(q^f_{1})^2\Big)dz
+\frac 34\int_{-\frac{\delta}{\varepsilon}}^{\frac{\delta}{\varepsilon}}
\Big((\partial_z{p_1})^2+f''\big(\theta(z)\big){p_1}^2\Big)dz\nonumber
\nonumber\\&\geq\frac 34\gamma^2\lambda^f_1+\frac 34\lambda^f_2\|{p_1}\|^2\geq-C\ve^2\gamma^2+\frac 34\lambda^f_2\|{p_1}\|^2
\nonumber\\&\geq-C\ve^2\int_{-\frac{\delta}{\varepsilon}}^{\frac{\delta}{\varepsilon}}v^2Jdz+\frac 34\lambda^f_2\|{p_1}\|^2,
\end{align*}
here we have used (\ref{eigen:f-1}) and (\ref{eigen:f-2}).

\underline{Step 3.} To estimate $II$, we use Lemma \ref{u1:rewrite}
and then
\begin{align*}
&\int_{-\frac{\delta}{\varepsilon}}^{\frac{\delta}{\varepsilon}}f'''\big(\theta(z)\big)\widetilde{u}^{(1)}(x,t,z)(\theta'(z))^2dz\nonumber\\&=\bar{p}(s_t(x))\int_{-\frac{\delta}{\varepsilon}}^{\frac{\delta}{\varepsilon}}f'''\big(\theta(z)\big)\theta_1(z)(\theta'(z))^2dz
+\int_{-\frac{\delta}{\varepsilon}}^{\frac{\delta}{\varepsilon}}f'''\big(\theta(z)\big)\bar{q}(x)(\theta'(z))^2dz
\nonumber\\&=-\bar{p}(s_t(x))\int_{-\infty}^{-\frac{\delta}{\ve}}f'''\big(\theta(z)\big)\theta_1(z)(\theta'(z))^2dz
-\bar{p}(s_t(x))\int_{\frac{\delta}{\ve}}^{+\infty}f'''\big(\theta(z)\big)\theta_1(z)(\theta'(z))^2dz
\nonumber\\&\quad+\int_{-\frac{\delta}{\varepsilon}}^{\frac{\delta}{\varepsilon}}f'''\big(\theta(z)\big)\bar{q}(x)(\theta'(z))^2dz
\nonumber\\&=O(\varepsilon).
\end{align*}
Hence we find
\begin{align*}
II&=\int_{-\frac{\delta}{\varepsilon}}^{\frac{\delta}{\varepsilon}} f'''\big(\theta(z)\big)\widetilde{u}^{(1)}(x,t,z)\hat{v}^2dzdS
\nonumber\\&=\gamma^2\int_{-\frac{\delta}{\varepsilon}}^{\frac{\delta}{\varepsilon}}f'''\big(\theta(z)\big)\widetilde{u}^{(1)}(q^f_{1})^2dz+ 2\gamma\int_{-\frac{\delta}{\varepsilon}}^{\frac{\delta}{\varepsilon}}f'''\big(\theta(z)\big)\widetilde{u}^{(1)}q^f_{1}{p_1} dz
\nonumber\\&\qquad\qquad\qquad+\int_{-\frac{\delta}{\varepsilon}}^{\frac{\delta}{\varepsilon}}f'''\big(\theta(z)\big)\widetilde{u}^{(1)}{p_1}^2dz
\nonumber\\&=\gamma^2
\alpha^2\int_{-\frac{\delta}{\varepsilon}}^{\frac{\delta}{\varepsilon}}f'''\big(\theta(z)\big)\widetilde{u}^{(1)}(\theta')^2dz
+O(e^{-\frac{C}{\ve}})\gamma^2+O(1)\|{p_1}\|\gamma+O(1)\|{p_1}\|^2
\nonumber\\&=O(\ve)\gamma^2
+O(e^{-\frac{C}{\ve}})\gamma^2+O(1)\gamma\|{p_1}\|+O(1)\|{p_1}\|^2
\nonumber\\&\geq\big(O(e^{-\frac{C}{\ve}})+O(\ve)\big)\gamma^2
-\frac{1}{8\varepsilon}\lambda^f_2\|{p_1}\|^2
\nonumber\\&\geq-C\ve\gamma^2-\frac{1}{8\varepsilon}\lambda^f_2\|{p_1}\|^2
\nonumber\\&\geq-C\ve\int_{-\frac{\delta}{\varepsilon}}^{\frac{\delta}{\varepsilon}}v^2Jdz
-\frac{1}{8\varepsilon}\lambda^f_2\|{p_1}\|^2,
\end{align*}
where we have used $\hat{v}=\gamma q_1^f + p_1$ and (\ref{eigenfunc:f}).

Therefore we have
\begin{align*}
\varepsilon^{-2}\int_{\Gamma_k^\ve}I dS+\varepsilon^{-1}\int_{\Gamma_k^\ve}IIdS\geq-C\int_{\Omega}v^2dx
\end{align*}
which and (\ref{modified bad terms}) imply  the desired conclusion.

Thus we complete the proof of the lemma.

\subsection{Error estimates}
\indent

Let $u^{err}=u^\varepsilon-u^A,\mu^{err}=\mu^\varepsilon-\mu^A, \sigma^{err}=\sigma^\varepsilon-\sigma^A, \varphi^{err}=\varphi^\varepsilon-\varphi^A $ with $\varphi^\varepsilon=u^\varepsilon+\sigma^\varepsilon$
and impose
\begin{align}
u_0^\varepsilon(x)=\varphi^A(x,0)-\sigma_0^\varepsilon(x),\ \sigma_0^\varepsilon(x)=\sigma^A(x,0).\label{initial data-impose}
\end{align}
Here  $(u^A, \mu^A,\sigma^A, \varphi^A)$  is defined in (\ref{approximate solutions}).

By (\ref{model}) and (\ref{approximate model}) there hold
\begin{eqnarray}
\left \{
\begin {array}{ll}
\partial_t\varphi^{err}-\Delta \mu^{err}-\Delta \sigma^{err}=0,\quad&\text{in}\ \ \Omega\times(0,T),\\[3pt]
\partial_t\sigma^{err}-\Delta \sigma^{err}=-(2\sigma^{err}+u^{err}-\mu^{err}),\ &\text{in}\ \ \Omega\times(0,T),\\[3pt]
\mu^{err}=-\varepsilon \Delta u^{err}+\frac{1}{\varepsilon}f''(u^A)u^{err}+\frac{1}{\varepsilon}\mathbb{F}-\omega_4^A,\ &\text{in}\ \ \Omega\times(0,T),\\[3pt]
u^{err}=\varphi^{err}-\sigma^{err}+\omega_5^A,\ &\text{in}\ \ \Omega\times(0,T),\\[3pt]
\varphi^{err}(x,0)=\sigma^{err}(x,0)= 0,&\text{on}\ \ \Omega\times\{0\},\\[3pt]
\frac{\partial \varphi^{err}}{\partial\nn}=\frac{\partial \mu^{err}}{\partial\nn}=\frac{\partial \sigma^{err}}{\partial\nn}=0,\quad &\text{on}\ \partial\Omega\times(0,T),\label{err equation}
\end{array}
\right.
\end{eqnarray}
where
\begin{align*}
\mathbb{F}=f'(u^{err}+u^A)-f'(u^A)-f''(u^A)u^{err}=4(u^{err})^3+8u^A(u^{err})^2
\end{align*}
and
\begin{align*}
\omega_5^A=-\frac{1}{|\Omega|}\int_0^t\int_{\Omega}\big(\omega^A_1+\omega^A_2\big)(x,t')dxdt'+\omega_2^A+\tilde{\mu}^A=O(\varepsilon^{k-1}).
\end{align*}

%
%

\begin{Theorem} For small $\varepsilon$ and large $k$, there exist  $\gamma=\gamma(k)\in(1,k)$  which is an increasing function of $k$ and  a positive constant $C$ such that
\begin{align*}
\big\|u^{err}\big\|_{L^p(\Omega\times(0.T))}+\big\|\varphi^{err}\big\|_{L^p(\Omega\times(0.T))}+\big\|\sigma^{err}\big\|_{L^p(\Omega\times(0.T))}\leq C\varepsilon^{\gamma}.
\end{align*}
\end{Theorem}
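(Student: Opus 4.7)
The strategy is a coupled energy estimate on $(\varphi^{err},\sigma^{err})$, exploiting the gradient-flow structure of the $(\varphi^\varepsilon,\sigma^\varepsilon)$ system, with Theorem \ref{spectral theorem} supplying the crucial lower bound on the Cahn--Hilliard quadratic form. First I would observe that, by integrating the first equation of \eqref{err equation} over $\Omega$ and using the Neumann condition together with $\varphi^{err}(\cdot,0)=0$, one has $\int_\Omega \varphi^{err}(\cdot,t)\,dx=0$ for all $t\in[0,T]$. Thus I can introduce $w\in H^2(\Omega)$ with zero mean and $\partial_\nn w=0$ solving $\Delta w=\varphi^{err}$, and work with the energy
\begin{equation*}
E(t)=\tfrac{1}{2}\int_\Omega |\nabla w|^2\,dx+\tfrac{1}{2}\int_\Omega |\sigma^{err}|^2\,dx \;\sim\; \tfrac{1}{2}\|\varphi^{err}\|_{H^{-1}}^2+\tfrac{1}{2}\|\sigma^{err}\|_{L^2}^2.
\end{equation*}

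Next I would test the $\varphi^{err}$ equation against $w$ (using $\int w\,\partial_t \varphi^{err}\,dx=-\tfrac{1}{2}\tfrac{d}{dt}\|\nabla w\|_{L^2}^2$) and the $\sigma^{err}$ equation against $\sigma^{err}$, integrate by parts, and add the two identities. The cross couplings $\int \mu^{err}\sigma^{err}\,dx$ cancel thanks to the algebraic identity $\varphi^{err}=u^{err}+\sigma^{err}-\omega_5^A$. Substituting the explicit expression for $\mu^{err}$ from the third line of \eqref{err equation} produces on the dissipative side the key quadratic form
\begin{equation*}
\varepsilon\int_\Omega|\nabla u^{err}|^2\,dx+\tfrac{1}{\varepsilon}\int_\Omega f''(u^A)(u^{err})^2\,dx,
\end{equation*}
together with $\|\nabla\sigma^{err}\|^2+2\|\sigma^{err}\|^2$ and a collection of lower-order terms involving $\omega_4^A$, $\omega_5^A$ (both $O(\varepsilon^{k-1})$) and the nonlinear residual $\mathbb{F}=4(u^{err})^3+8u^A(u^{err})^2$.

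Since $u^{err}$ need not have zero mean, I would split $u^{err}=(u^{err}-\bar u^{err})+\bar u^{err}$. Using $u^{err}-\bar u^{err}=\varphi^{err}-(\sigma^{err}-\bar\sigma^{err})+(\omega_5^A-\bar\omega_5^A)$, I construct $w^\star$ with $\Delta w^\star=u^{err}-\bar u^{err}$ as a sum of inverse Laplacians, so that $\|\nabla w^\star\|_{L^2}^2\le C\bigl(\|\nabla w\|_{L^2}^2+\|\sigma^{err}\|_{L^2}^2+\varepsilon^{2(k-1)}\bigr)$. Theorem \ref{spectral theorem} then yields
\begin{equation*}
\varepsilon\int|\nabla(u^{err}-\bar u^{err})|^2+\tfrac{1}{\varepsilon}\int f''(u^A)(u^{err}-\bar u^{err})^2\,dx \;\ge\; -C\|\nabla w^\star\|_{L^2}^2,
\end{equation*}
and the mean-interaction crossterms are controlled using $|\bar u^{err}|\le C(\|\sigma^{err}\|_{L^2}+\varepsilon^{k-1})$ combined with a boundedness bound for $\int f''(u^A)\,dx$. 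The source contributions from $\omega_4^A,\omega_5^A$ are dispatched by Cauchy--Schwarz and Young, each producing a term $\le \delta E(t)+C_\delta\varepsilon^{2(k-1)}$.

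Assembling these ingredients yields a differential inequality of the form
\begin{equation*}
\tfrac{d}{dt}E(t)+c\bigl(\|\nabla\sigma^{err}\|_{L^2}^2+\|\sigma^{err}\|_{L^2}^2\bigr)\le CE(t)+C\varepsilon^{2\gamma},
\end{equation*}
to which Gronwall and $E(0)=0$ give $E(t)\le C\varepsilon^{2\gamma}$ on $[0,T]$. The desired $L^p$ bounds for $u^{err},\varphi^{err},\sigma^{err}$ then follow by interpolating this $H^{-1}\times L^2$ estimate against higher-order Sobolev bounds on $u^{err}$, extracted from the elliptic identity $\mu^{err}=-\varepsilon\Delta u^{err}+\tfrac{1}{\varepsilon}f''(u^A)u^{err}+\cdots$ by differentiating, with a small loss of exponent that produces $\gamma=\gamma(k)\in(1,k)$ increasing in $k$. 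The main obstacle will be the nonlinear contribution $\tfrac{1}{\varepsilon}\int \mathbb{F}\,u^{err}\,dx$: in dimension $N\ge 2$ the quartic $\int(u^{err})^4$ cannot be absorbed into $\varepsilon\|\nabla u^{err}\|_{L^2}^2$ without a priori $L^\infty$ smallness, so the argument must be phrased as a continuity/bootstrap on the largest subinterval of $[0,T]$ on which $\|u^{err}\|_{L^\infty}\le \varepsilon^{1/2}$; closedness and openness of this set in turn require the exponent $\gamma$ to strictly exceed $1$, which is why $k$ has to be chosen large enough in the matched expansion.
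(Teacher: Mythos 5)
Your overall skeleton -- testing the $\varphi^{err}$ equation against the Neumann inverse Laplacian $w$ of $\varphi^{err}$, testing the $\sigma^{err}$ equation against $\sigma^{err}$, invoking the spectral condition of Theorem \ref{spectral theorem}, and closing with Gronwall, Sobolev interpolation and a continuity argument -- is exactly the paper's. But there is a genuine gap in your treatment of the nonlinearity, and it propagates into how you close the argument. You describe $\frac{1}{\varepsilon}\int\mathbb{F}\,u^{err}\,dx$ as ``the main obstacle'' because ``the quartic $\int (u^{err})^4$ cannot be absorbed into $\varepsilon\|\nabla u^{err}\|_{L^2}^2$ without a priori $L^\infty$ smallness,'' and you propose a bootstrap on the largest subinterval where $\|u^{err}\|_{L^\infty}\le\varepsilon^{1/2}$. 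This misreads the sign structure: after adding the two energy identities the nonlinear contribution is $\frac{1}{\varepsilon}\int(\varphi^{err}-\sigma^{err})\mathbb{F}\,dx$ with $\varphi^{err}-\sigma^{err}=u^{err}-\omega_5^A$, whose leading part $\frac{4}{\varepsilon}\int(u^{err})^4$ is \emph{nonnegative} and sits on the dissipative side. The cubic term $8u^A(u^{err})^3$ with bounded coefficient is then dominated pointwise by the quartic up to a term of order $|u^{err}|^p$ for any $p\in(1,3]$; this is precisely the paper's inequality (\ref{nonlinear estimate}), $(\varphi-\sigma)\mathbb{F}\ge -C_p|\varphi|^p-C_p|\sigma|^p+\omega_8^A|\varphi-\sigma|$, and it requires no smallness of $u^{err}$ whatsoever. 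Your proposed $L^\infty$ bootstrap is therefore not only unnecessary but also circular as stated: pointwise control of $u^{err}$ at order $\varepsilon^{1/2}$ is essentially the higher-order regularity (Theorem \ref{err:higher order}) that the paper derives \emph{after} this $L^p$ theorem, so you cannot assume it at this stage.

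The second consequence is that your closure differs from, and is weaker than, the paper's. The paper absorbs the residual $\frac{C_p}{\varepsilon}\int(|\varphi|^p+|\sigma|^p)$ by the Gagliardo--Nirenberg interpolation $\|\varphi\|_{L^p}\le C\|\varphi\|_{L^2}^{\theta}\|\nabla\varphi\|_{L^2}^{1-\theta}$ combined with $\|\varphi\|_{L^2}^2\le\|\nabla\psi\|_{L^2}\|\nabla\varphi\|_{L^2}$, where $\|\nabla\varphi\|_{L^2(\Omega\times(0,t))}$ is itself estimated from the quadratic form and the measure of $\{f''(u^A)<0\}=O(\varepsilon)$; this produces the recursive inequality (\ref{recursive inequality}) for $\Theta(t)=\|\varphi\|_{L^p(\Omega\times(0,t))}+\|\sigma\|_{L^p(\Omega\times(0,t))}$ with total exponent $1+\theta(p-2)/4>1$ for fixed $p\in(2,3]$, and the continuity argument is run on $\Theta$ in the space-time $L^p$ norm -- not on an $L^\infty$ bound. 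Your plan to ``interpolate against higher-order Sobolev bounds on $u^{err}$, extracted by differentiating the elliptic identity'' skips this mechanism and leaves unexplained where the superlinear power (hence the gain $\gamma>1$) actually comes from. A minor further remark: your mean-correction of $u^{err}$ is not needed; the paper applies the spectral condition directly to $\varphi^{err}-\sigma^{err}$ with $w=\psi-\varrho$, where $\varrho$ solves a Dirichlet problem for $\sigma^{err}$, since Theorem \ref{spectral theorem} only requires some $w\in H^2$ with $\Delta w=v$.
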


\begin{proof}
For the sake of clarity we omit the superscript ``err" in the proof and only in this proof.

Noting that
\begin{align*}
\int_\Omega\varphi (t,x)dx=\int_0^t\int_\Omega\partial_t\varphi (t,x)dx=\int_0^t\int_\Omega(\Delta \mu+\Delta \sigma)(t,x)dx=0,
\end{align*}
then there exists a function $\psi(t,\cdot)(t\in(0,T))$ which satisfies
\begin{eqnarray}
\left \{
\begin {array}{ll}
-\Delta\psi=\varphi,\ &\text{in}\ \ \Omega,\\[3pt]
\frac{\partial \psi}{\partial\nn}=0,\quad &\text{on}\ \partial\Omega,\\[3pt]
 \int_\Omega\psi(t,x) dx=0,&t\in(0,T),\label{inverse laplace-1}
\end{array}
\right.
\end{eqnarray}

And  let $\varrho(t,\cdot)(t\in(0,T))$ be the solution of the following equation
\begin{eqnarray}
\left \{
\begin {array}{ll}
-\Delta\varrho=\sigma,\ &\text{in}\ \ \Omega,\\
\varrho=0,\quad &\text{on}\ \partial\Omega.\label{inverse laplace-2}
\end{array}
\right.
\end{eqnarray}

Multiplying the first equation in (\ref{err equation}) by  $\psi$ and integrating by parts we have
\begin{align}
&\frac{1}{2}\frac{d}{dt}\int_{\Omega}|\nabla\psi|^2dx+\int_{\Omega}\bigg(\varepsilon|\nabla \varphi|^2+\frac{1}{\varepsilon}f''(u^A)\varphi^2\bigg)dx\nonumber\\&\quad-\int_{\Omega}\bigg(\varepsilon \nabla \varphi\nabla \sigma+\frac{1}{\varepsilon}f''(u^A)\varphi\sigma\bigg)dx+\frac{1}{\varepsilon}\int_{\Omega}\mathbb{F} \varphi dx+\int_{\Omega}\nabla\psi\nabla\sigma dx
\nonumber\\&=\int_{\Omega}\omega_6^A \varphi dx,\label{energy-1}
\end{align}
where
\begin{align*}
\omega_6^A=\omega_4^A+\varepsilon\Delta\omega_5^A-\varepsilon^{-1}f''(u^A)\omega_5^A=O(\varepsilon^{k-2}).
\end{align*}
%
%

Multiplying the second equation in (\ref{err equation}) by  $\sigma$ and integrating by parts we have
\begin{align}
&\frac{1}{2}\frac{d}{dt}\int_{\Omega}\sigma^2dx+\int_{\Omega}|\nabla \sigma|^2dx+\int_{\Omega}\sigma^2 dx+\int_{\Omega}\bigg(\varepsilon|\nabla \sigma|^2+\frac{1}{\varepsilon}f''(u^A)\sigma^2\bigg)dx\nonumber\\&\qquad\qquad-\int_{\Omega}\bigg(\varepsilon \nabla \varphi\nabla \sigma+\frac{1}{\varepsilon}f''(u^A)\varphi\sigma\bigg)dx-\frac{1}{\varepsilon}\int_{\Omega}\mathbb{F} \sigma dx+\int_{\Omega}\nabla\psi\nabla\sigma dx
\nonumber\\&=\int_{\Omega}\omega_7^A\sigma dx,\label{energy-2}
\end{align}
where
\begin{align*}
\omega_7^A=-\omega_4^A-\omega_5^A-\varepsilon\Delta\omega_5^A+\varepsilon^{-1}f''(u^A)\omega_5^A=O(\varepsilon^{k-2}).
\end{align*}

Combining (\ref{energy-1}) and (\ref{energy-2}) we get
\begin{align}
&\frac{1}{2}\frac{d}{dt}\int_{\Omega}|\nabla\psi|^2dx+\frac{1}{2}\frac{d}{dt}\int_{\Omega}\sigma^2dx+\int_{\Omega}|\nabla \sigma|^2dx+\int_{\Omega}\sigma^2 dx\nonumber\\&\qquad+\int_{\Omega}\bigg(\varepsilon|\nabla (\varphi-\sigma)|^2+\frac{1}{\varepsilon}f''(u^A)(\varphi-\sigma)^2\bigg)dx
\nonumber\\&\qquad+2\int_{\Omega}\nabla\psi\nabla\sigma dx+\frac{1}{\varepsilon}\int_{\Omega}(\varphi-\sigma)\mathbb{F}  dx
\nonumber\\&=\int_{\Omega}\omega_6^A \varphi dx+\int_{\Omega}\omega_7^A\sigma dx.\label{energy-3}
\end{align}

Moreover, we can easily find
\begin{align}
(\varphi-\sigma)\mathbb{F}&=4(\varphi-\sigma)^4+(8u^A+12\omega_5^A)(\varphi-\sigma)^3+(16u^A\omega_5^A+12(\omega_5^A)^2)(\varphi-\sigma)^2
\nonumber\\[3pt]
&\qquad\qquad\quad\ \ +(8u^A(\omega_5^A)^2+4(\omega_5^A)^3)(\varphi-\sigma)
\nonumber\\[3pt]
&\geq-\widetilde{C}_p|\varphi-\sigma|^p+\omega_8^A|\varphi-\sigma|,
\nonumber\\[3pt]
&\geq-C_p|\varphi|^p-C_p|\sigma|^p+\omega_8^A|\varphi-\sigma|,\ \ \forall p\in(1,3],\label{nonlinear estimate}
\end{align}
where the positive constants $\widetilde{C}_p, C_p$ depend on $p$ and $\omega_8^A=O(\varepsilon^{k-1})$.

Plugging (\ref{nonlinear estimate}) into (\ref{energy-3}),  using the Young's inequality and the Sobolev inequality we arrive at
\begin{align}
&\frac{1}{2}\frac{d}{dt}\int_{\Omega}|\nabla\psi|^2dx+\frac{1}{2}\frac{d}{dt}\int_{\Omega}\sigma^2dx+\frac{1}{2}\int_{\Omega}|\nabla \sigma|^2dx+\int_{\Omega}\sigma^2 dx
\nonumber\\&\qquad+\int_{\Omega}\bigg(\varepsilon|\nabla(\varphi-\sigma)|^2+\frac{1}{\varepsilon}f''(u^A)(\varphi-\sigma)^2\bigg)dx
\nonumber\\&\leq C\int_{\Omega}|\nabla\psi|^2dx+C\int_{\Omega}\sigma^2dx+\frac{C_p}{\varepsilon}\int_{\Omega}|\varphi|^pdx+\frac{C_p}{\varepsilon}\int_{\Omega}|\sigma|^pdx
\nonumber\\&\quad+\bigg(\int_{\Omega}|\varphi|^pdx\bigg)^{\frac{1}{p}}\bigg(\int_{\Omega}|\omega_9^A|^qdx\bigg)^{\frac{1}{q}}
\nonumber\\&\quad+\bigg(\int_{\Omega}|\sigma|^pdx\bigg)^{\frac{1}{p}}\bigg(\int_{\Omega}|\omega_{10}^A|^qdx\bigg)^{\frac{1}{q}},\label{energy-4}
\end{align}
where  $\omega_9^A=O(\varepsilon^{k-2}), \omega_{10}^A=O(\varepsilon^{k-2})$ and $\frac{1}{p}+\frac{1}{q}=1.$

According to the spectral condition Theorem \ref{spectral theorem} one has for small $\varepsilon$
\begin{align}
\int_{\Omega}\bigg(\varepsilon|\nabla(\varphi-\sigma)|^2+\frac{1}{\varepsilon}f''(u^A)(\varphi-\sigma)^2\bigg)dx\geq -C\int_{\Omega}|\nabla (\psi-\varrho)|^2dx,\label{energy-5}
\end{align}
where $C$ is a positive constant independent of $t.$

By the Poincar\'{e} inequality and (\ref{inverse laplace-2}) one get
\begin{align}
\int_{\Omega}|\nabla \varrho|^2dx\leq C\int_{\Omega}\sigma^2dx.\label{energy-6}
\end{align}

Plugging (\ref{energy-5})-(\ref{energy-6}) into (\ref{energy-4}) we obtain
\begin{align*}
&\frac{1}{2}\frac{d}{dt}\int_{\Omega}|\nabla\psi|^2dx+\frac{1}{2}\frac{d}{dt}\int_{\Omega}\sigma^2dx+\frac{1}{2}\int_{\Omega}|\nabla \sigma|^2dx+\int_{\Omega}\sigma^2 dx
\nonumber\\
&\leq C\int_{\Omega}|\nabla\psi|^2dx+C\int_{\Omega}\sigma^2dx+\frac{C_p}{\varepsilon}\int_{\Omega}|\varphi|^pdx+\frac{C_p}{\varepsilon}\int_{\Omega}|\sigma|^pdx
\nonumber\\&\quad+\bigg(\int_{\Omega}|\varphi|^pdx\bigg)^{\frac{1}{p}}\bigg(\int_{\Omega}|\omega_9^A|^qdx\bigg)^{\frac{1}{q}}
\nonumber\\&\quad+\bigg(\int_{\Omega}|\sigma|^pdx\bigg)^{\frac{1}{p}}\bigg(\int_{\Omega}|\omega_{10}^A|^qdx\bigg)^{\frac{1}{q}}.
\end{align*}

It follows from the Gronwall inequality that for any $t\in(0,T]$
\begin{align}
&\sup_{0\leq t'\leq t}\Big(\big\|\nabla\psi\big\|^2_{L^2(\Omega)}+\big\|\sigma\big\|^2_{L^2(\Omega)}\Big)
+\big\|\sigma\big\|^2_{L^2(\Omega\times(0,t))}+\big\|\nabla\sigma\big\|^2_{L^2(\Omega\times(0,t))}
\nonumber\\
&\leq C\Big(\varepsilon^{-1}\big\|\varphi\big\|^p_{L^p(\Omega\times(0,t))}+\varepsilon^{-1}\big\|\sigma\big\|^p_{L^p(\Omega\times(0,t))}
+\big\|\varphi\big\|_{L^p(\Omega\times(0,t))}\big\|\omega_9^A\big\|_{L^q(\Omega\times(0,t))}
\nonumber\\&\qquad\qquad+\big\|\sigma\big\|_{L^p(\Omega\times(0,t))}\big\|\omega_{10}^A\big\|_{L^q(\Omega\times(0,t))}\Big).
\label{energy estimate-1}
\end{align}

Furthermore,  by (\ref{energy-4}) and  (\ref{energy estimate-1}) one gets
\begin{align*}
&\big\|\nabla(\varphi-\sigma)\big\|^2_{L^2(\Omega\times(0.t))}\nonumber\\
&\leq\varepsilon^{-2}\bigg(-\int_0^t\int_{\Omega}f''(u^A)(\varphi-\sigma)^2dxdt'\bigg)
+ C\varepsilon^{-2}\Big(\big\|\varphi\big\|^p_{L^p(\Omega\times(0,t))}+\big\|\sigma\big\|^p_{L^p(\Omega\times(0,t))}\Big)
\nonumber\\[3pt]
&\quad+C\varepsilon^{-1}\Big(\big\|\varphi\big\|_{L^p(\Omega\times(0,t))}\big\|\omega_9^A\big\|_{L^q(\Omega\times(0,t))}
+\big\|\sigma\big\|_{L^p(\Omega\times(0,t))}\big\|\omega_{10}^A\big\|_{L^q(\Omega\times(0,t))}\Big)
\nonumber\\[3pt]
&\leq \varepsilon^{-2}\big\|\varphi-\sigma\big\|^2_{L^p(\Omega\times(0,t))}\big\|f''(
{u^A})\big\|_{L^\infty(\Omega\times(0,t))}\text{measure}\{f''({u^A})<0\}^{1-\frac{2}{p}}
\nonumber\\[3pt]
&\quad+ C\varepsilon^{-2}\Big(\big\|\varphi\big\|^p_{L^p(\Omega\times(0,t))}+\big\|\sigma\big\|^p_{L^p(\Omega\times(0,t))}\Big)
+C\varepsilon^{-1}\big\|\varphi\big\|_{L^p(\Omega\times(0,t))}\big\|\omega_7^A\big\|_{L^q(\Omega\times(0,t))}
\nonumber\\[3pt]
&\leq C\varepsilon^{-1-\frac{2}{p}}\Big(\big\|\varphi\big\|^2_{L^p(\Omega\times(0,t))}+\big\|\sigma\big\|^2_{L^p(\Omega\times(0,t))}\Big)
+ C\varepsilon^{-2}\Big(\big\|\varphi\big\|^p_{L^p(\Omega\times(0,t))}+\big\|\sigma\big\|^p_{L^p(\Omega\times(0,t))}\Big)
\nonumber\\[3pt]
&\quad+C\varepsilon^{-1}\Big(\big\|\varphi\big\|_{L^p(\Omega\times(0,t))}\big\|\omega_9^A\big\|_{L^q(\Omega\times(0,t))}
+\big\|\sigma\big\|_{L^p(\Omega\times(0,t))}\big\|\omega_{10}^A\big\|_{L^q(\Omega\times(0,t))}\Big).
\end{align*}

Thus
\begin{align*}
\big\|\nabla\varphi\big\|^2_{L^2(\Omega\times(0,t))}&\leq\big\|\nabla(\varphi-\sigma)\big\|^2_{L^2(\Omega\times(0,t))}+\big\|\nabla\sigma\big\|^2_{L^2(\Omega\times(0,t))}
\nonumber\\[3pt]
&\leq C\varepsilon^{-1-\frac{2}{p}}\Big(\big\|\varphi\big\|^2_{L^p(\Omega\times(0,t))}+\big\|\sigma\big\|^2_{L^p(\Omega\times(0,t))}\Big)
\nonumber\\[3pt]
&\quad+ C\varepsilon^{-2}\Big(\big\|\varphi\big\|^p_{L^p(\Omega\times(0,t))}+\big\|\sigma\big\|^p_{L^p(\Omega\times(0,t))}\Big)
\nonumber\\[3pt]
&\quad+C\varepsilon^{-1}\Big(\big\|\varphi\big\|_{L^p(\Omega\times(0,t))}\big\|\omega_9^A\big\|_{L^q(\Omega\times(0,t))}
+\big\|\sigma\big\|_{L^p(\Omega\times(0,t))}\big\|\omega_{10}^A\big\|_{L^q(\Omega\times(0,t))}\Big).
\end{align*}
%

Applying the Sobolev imbedding theorem and the H\"{o}lder inequality we get
\begin{align}
&\big\|\varphi\big\|^p_{L^p(\Omega\times(0,t))}+\big\|\sigma\big\|^p_{L^p(\Omega\times(0,t))}
\nonumber\\&=\int_0^t\Big(\big\|\varphi\big\|^p_{L^p(\Omega)}+\big\|\sigma\big\|^p_{L^p(\Omega)}\Big)(t')dt'
\nonumber\\&\leq C\int_0^t\big\|\varphi\big\|^{\theta p}_{L^2(\Omega)}\big\|\nabla\varphi\big\|^{(1-\theta) p}_{L^2(\Omega)}(t')dt'+C\int_0^t\big\|\sigma\big\|^{\theta p}_{L^2(\Omega)}\big\|\nabla\sigma\big\|^{(1-\theta) p}_{L^2(\Omega)}(t')dt'
\nonumber\\&\qquad\qquad\qquad\qquad\qquad\qquad\qquad\quad+C\int_0^t\big\|\sigma\big\|^{p}_{L^2(\Omega)}(t')dt'
\nonumber\\&\leq C\int_0^t\big\|\nabla\psi\big\|^{\frac{\theta p}{2}}_{L^2(\Omega)}\big\|\nabla\varphi\big\|^{(1-\frac{\theta}{2}) p}_{L^2(\Omega)}(t')dt'+C\int_0^t\big\|\sigma\big\|^{\theta p}_{L^2(\Omega)}\big\|\nabla\sigma\big\|^{(1-\theta) p}_{L^2(\Omega)}(t')dt'
\nonumber\\&\qquad\qquad\qquad\qquad\qquad\qquad\qquad\quad+C\int_0^t\big\|\sigma\big\|^{p}_{L^2(\Omega)}(t')dt'
\nonumber\\&\leq C\Big(\int_0^t\big\|\nabla\psi\big\|^{\frac{\alpha\theta p}{2}}_{L^2(\Omega)}(t')dt'\Big)^{\frac{1}{\alpha}}\big\|\nabla\varphi\big\|^{(1-\frac{\theta}{2}) p}_{L^2(\Omega\times(0,t))}+C\Big(\int_0^t\big\|\sigma\big\|^{\beta\theta p}_{L^2(\Omega)}(t')dt'\Big)^{\frac{1}{\beta}}\big\|\nabla\sigma\big\|^{(1-\theta) p}_{L^2(\Omega\times(0,t))}
\nonumber\\&\qquad\qquad\qquad\qquad\qquad\qquad\qquad\quad+C\int_0^t\big\|\sigma\big\|^{p}_{L^2(\Omega)}(t')dt'
\nonumber\\&\leq C\Big(\sup_{0\leq t'\leq t}\big\|\nabla\psi\big\|_{L^2(\Omega)}\Big)^{\frac{\theta p}{2}}\big\|\nabla\varphi\big\|^{(1-\frac{\theta}{2}) p}_{L^2(\Omega\times(0,t))}+C\Big(\sup_{0\leq t'\leq t}\big\|\sigma\big\|_{L^2(\Omega)}\Big)^{\theta p}\big\|\nabla\sigma\big\|^{(1-\theta) p}_{L^2(\Omega\times(0,t))}
\nonumber\\&\qquad\qquad\qquad\qquad\qquad\qquad\qquad\quad+CT\Big(\sup_{0\leq t'\leq t}\big\|\sigma\big\|_{L^2(\Omega)}\Big)^{p},\label{energy estimate-4}
\end{align}
where $\frac{1}{p}=\frac{\theta}{2}+\frac{(N-2)(1-\theta)}{2N}$, $\frac{1}{\alpha}+\frac{(1-\frac{\theta}{2}) p}{2}=1$ and $\frac{1}{\beta}+\frac{(1-\theta)p}{2}=1$.

Setting $\Theta(t)=\big\|\varphi\big\|_{L^p(\Omega\times(0,t))}+\big\|\sigma\big\|_{L^p(\Omega\times(0,t))}(t\in(0,T])$, then with the help of $(\ref{energy estimate-1})-(\ref{energy estimate-4})$, we derive a recursive inequality for $\Theta(t)$:
\begin{align}
\Theta^p(t)&\leq\Big(\varepsilon^{-1}\Theta^p(t)+\Theta(t)\big\|\omega_{11}^A\big\|_{L^q(\Omega\times(0,t))}\Big)^\frac{\theta p}{4}\Big(\varepsilon^{-1-\frac{2}{p}}\Theta^2(t)\nonumber\\&\qquad\qquad\qquad+\varepsilon^{-2}\Theta^p(t)
+\varepsilon^{-1}\Theta(t)\big\|\omega_{11}^A\big\|_{L^q(\Omega\times(0,t))}\Big)^{\big(\frac{1}{2}-\frac{\theta }{4}\big)p}
\nonumber\\&\quad +C\Big(\varepsilon^{-1}\Theta^p(t)+\Theta(t)\big\|\omega_{11}^A\big\|_{L^q(\Omega\times(0,t))}\Big)^\frac{ p}{2}
\label{recursive inequality}
\end{align}
where  $\omega_{11}^A=O(\varepsilon^{k-2})$.

Noting that if we  fix  $p\in (2,3]$, then
\begin{align*}
\frac{\theta p}{4}+2\big(\frac{1}{2}-\frac{\theta}{4}\big)=1+\frac{\theta(p-2)}{4}>1,\ \ \frac{p}{2}>1.
\end{align*}

According to the continuous argument, for small $\varepsilon$ and large $k$  there exits $\gamma=\gamma(k)\in(1,k)$ which tends to $+\infty$ as  $k\rightarrow+\infty$ such that
\begin{align*}
\Theta(T)\leq \varepsilon^{\gamma}.
\end{align*}

Hence
\begin{align*}
\big\|\varphi\big\|_{L^p(\Omega\times(0,T))}+\big\|\sigma\big\|_{L^p(\Omega\times(0,T))}\leq \varepsilon^{\gamma}
\end{align*}
and then
\begin{align*}
\big\|u\big\|_{L^p(\Omega\times(0,T))}&=\big\|\varphi-\sigma+\omega_5^A\big\|_{L^p(\Omega\times(0,T))}
\\&\leq\big\|\varphi\big\|_{L^p(\Omega\times(0,T))}+\big\|\sigma\big\|_{L^p(\Omega\times(0,T))}+\big\|\omega_5^A\big\|_{L^p(\Omega\times(0,T))}
\\[3pt]
&\leq C\varepsilon^{\gamma}.
\end{align*}
Therefore the proof of the theorem is completed.
\end{proof}

In order to establish   higher order regularity estimates of $u^{err},\varphi^{err}$ and $\sigma^{err}$, we consider the equations for $(u^{err},\sigma^{err})$
\begin{eqnarray*}
\left \{
\begin {array}{ll}
\partial_tu^{err}-\Delta \mu^{err}+\mu^{err}=2\sigma^{err}+u^{err}+\partial_t\omega_5^A,\quad&\text{in}\ \ \Omega\times(0,T),\\[5pt]
\partial_t\sigma^{err}-\Delta \sigma^{err}+2\sigma^{err}=\mu^{err}-u^{err},\ &\text{in}\ \ \Omega\times(0,T),
\end{array}
\right.
\end{eqnarray*}
which is a Cahn-Hilliard equation coupled linearly with a heat equation. Using the similar arguments in Theorem 2.3 in \cite{ABC} and the
boot-strap method we can give the desired conclusions. Here we omit the detailed argument. In particular we have

\begin{Theorem} \label{err:higher order}For small $\varepsilon$ and large $k$,
\begin{align*}
\big\|u^{err}\big\|_{C^{4,1}(\overline{\Omega}\times[0.T])}+\big\|\varphi^{err}\big\|_{C^{2,1}(\overline{\Omega}\times[0.T])}+\big\|\sigma^{err}\big\|_{C^{2,1}(\overline{\Omega}\times[0.T])}+\big\|\mu^{err}\big\|_{C^{2,1}(\overline{\Omega}\times[0.T])}\leq C\varepsilon.
\end{align*}

\end{Theorem}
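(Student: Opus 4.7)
The starting point is the previous theorem, which for any fixed $p\in(2,3]$ delivers
$$\big\|u^{err}\big\|_{L^p}+\big\|\varphi^{err}\big\|_{L^p}+\big\|\sigma^{err}\big\|_{L^p}\le C\varepsilon^{\gamma(k)},$$
with $\gamma(k)\to+\infty$ as $k\to+\infty$. The plan is to combine this reservoir of small $L^p$ norm with linear parabolic regularity and a bootstrap on the reformulated system
$$\partial_tu^{err}-\Delta\mu^{err}+\mu^{err}=2\sigma^{err}+u^{err}+\partial_t\omega_5^A,\quad \partial_t\sigma^{err}-\Delta\sigma^{err}+2\sigma^{err}=\mu^{err}-u^{err},$$
complemented by the Cahn--Hilliard-type identity
$\mu^{err}=-\varepsilon\Delta u^{err}+\varepsilon^{-1}f''(u^A)u^{err}+\varepsilon^{-1}\mathbb{F}-\omega_4^A$
and homogeneous Neumann data with zero initial values.

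The first step I would take is to treat the coupled system as a linear Cahn--Hilliard/heat pair in the spirit of Theorem 2.3 of \cite{ABC}. Using maximal $L^p$-regularity for the fourth-order Cahn--Hilliard component, the source terms $2\sigma^{err}+u^{err}+\partial_t\omega_5^A$ and the cubic remainder $\varepsilon^{-1}\mathbb{F}=4(u^{err})^3/\varepsilon+8u^A(u^{err})^2/\varepsilon$ are handled by the previous $L^p$ bound and by the error tails $\omega_i^A=O(\varepsilon^{k-2})$. Each application of the regularity estimate costs a bounded number of powers of $\varepsilon^{-1}$ (coming from $\varepsilon\Delta$ on one side of the Cahn--Hilliard identity and $\varepsilon^{-1}f''(u^A)$ on the other), but these losses are absorbed by the $\varepsilon^{\gamma(k)}$ reservoir provided $k$ is taken large enough. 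This upgrades the bounds to $W^{2,1}_p$ for $u^{err},\sigma^{err},\mu^{err}$; Sobolev embedding then produces Hölder smallness with the same $\varepsilon$-rate, up to a fixed (but $k$-independent) arithmetical loss.

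Next I would iterate. Once $u^{err}$ and $\sigma^{err}$ are small in $C^{0,\alpha}$, differentiate the identity for $\mu^{err}$ and the evolution equations, then reapply Schauder/$L^p$ parabolic estimates to reach $C^{2+\alpha,1+\alpha/2}$ for $\mu^{err}$ and $\sigma^{err}$, and $C^{4+\alpha,1+\alpha/2}$ for $u^{err}$ (the additional two derivatives on $u^{err}$ come directly from inverting the $\varepsilon\Delta$ term in the Cahn--Hilliard relation). At each bootstrap round the nonlinear term $\varepsilon^{-1}\mathbb{F}$ is recycled using the already-obtained Hölder norm of $u^{err}$, and the error terms $\omega_j^A$ remain at worst $O(\varepsilon^{k-2})$; thus the constants stay under control. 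Choosing $k$ so large that $\gamma(k)$ exceeds the total number of powers of $\varepsilon^{-1}$ consumed by the finite number of bootstrap steps needed for $C^{4,1}$ / $C^{2,1}$ regularity yields the stated $O(\varepsilon)$ bound.

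The main obstacle is bookkeeping the singular factors. The identity $\mu^{err}=-\varepsilon\Delta u^{err}+\varepsilon^{-1}f''(u^A)u^{err}+\dots$ means that one cannot bound $\mu^{err}$ pointwise directly from smallness of $u^{err}$: a factor $\varepsilon^{-1}$ inevitably appears, and an additional $\varepsilon^{-1}$ from the nonlinear $\mathbb{F}$ as well. The only way to close the estimates at the prescribed rate $\varepsilon$ is to balance these $\varepsilon^{-1}$ losses against the arbitrarily large $\varepsilon^{\gamma(k)}$ gain from the previous theorem plus the $\varepsilon^{k-2}$ smallness of $\omega_i^A$; this is exactly where the phrase "$k$ large enough" is used. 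A secondary, but routine, point is that the spectral condition of Theorem \ref{spectral theorem} is not needed here, because after the $L^p$ bound the cubic term is already a small perturbation in higher norms, so the argument is essentially linear parabolic regularity with careful tracking of $\varepsilon$-powers.
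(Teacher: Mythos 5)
Your proposal takes essentially the same route as the paper: the paper likewise rewrites the error system as a Cahn--Hilliard equation coupled linearly with a heat equation, invokes the regularity argument of Theorem 2.3 in \cite{ABC} together with a bootstrap, and absorbs the finitely many $\varepsilon^{-1}$ losses into the $\varepsilon^{\gamma(k)}$ reservoir by taking $k$ large. The paper in fact omits all details at this point, so your account of the $\varepsilon$-power bookkeeping is, if anything, more explicit than the published text.
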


Accordingly Theorem \ref{main theorem} can be obtained with the aid of Theorem \ref{err:higher order} if we take (\ref{initial data-impose}). Next we prove Corollary \ref{main corollary}.

\begin{proof}
Recalling the definition of $u^A$ in Section 5, we easily obtain (\ref{main corollary-1}). Now we prove (\ref{main corollary-2}). More concretely we only prove
\begin{align*}
\big\|\sigma^\varepsilon-\sigma\big\|_{C(\overline{\Omega}\times[0.T])}\rightarrow 0,\ \ \text{as}\ \  \varepsilon\rightarrow0.
\end{align*}
The other one in (\ref{main corollary-2}) is similar.

The definition of $\sigma^A$ in Section 5 yields
\begin{eqnarray*}
\text{the leading order of }\sigma^A=\left \{
\begin {array}{ll}
\sigma_+^{(0)},\ \qquad&\text{in}\ \ \Omega_+\backslash \Gamma(\delta),\\
\widetilde{\sigma}^{(0)}\zeta(\frac{d^{(0)}}{\delta})+\sigma_+^{(0)}\big(1-\zeta(\frac{d^{(0)}}{\delta})\big),\qquad&\text{in}\ \ \big(\Gamma(\delta)\backslash\Gamma(\frac{\delta}{2})\big)\cap\Omega_+,\\
\widetilde{\sigma}^{(0)},\ &\text{in}\ \ \Gamma(\frac{\delta}{2}),\\
\widetilde{\sigma}^{(0)}\zeta(\frac{d^{(0)}}{\delta})+\sigma_-^{(0)}\big(1-\zeta(\frac{d^{(0)}}{\delta})\big),\ &\text{in}\ \ \big(\Gamma(\delta)\backslash\Gamma(\frac{\delta}{2})\big)\cap\Omega_-,\\
\sigma_-^{(0)},\ &\text{in}\ \ \Omega_-\backslash \Gamma(\delta).
\end{array}
\right.
\end{eqnarray*}

Based on the inner-outer matching condition we find
\begin{align*}
\bigg\|\bigg(\widetilde{\sigma}^{(0)}\zeta(\frac{d^{(0)}}{\delta})+\sigma_{\pm}^{(0)}\big(1-\zeta(\frac{d^{(0)}}{\delta})\big)\bigg)-\sigma_{\pm}^{(0)}\bigg\|_{C\big(\big(\Gamma(\delta)\backslash\Gamma(\frac{\delta}{2})\big)\cap\Omega_\pm\big)}
\leq Ce^{-\frac{\delta}{4\varepsilon}}\rightarrow0,\ \ \text{as}\ \  \varepsilon\rightarrow0.
\end{align*}
Consequently we only need to prove
\begin{align*}
\big\|\widetilde{\sigma}^{(0)}-\sigma\big\|_{C(\Gamma(\frac{\delta}{2}))}\rightarrow 0,\ \ \text{as}\ \  \varepsilon\rightarrow0.
\end{align*}

Note that
\begin{eqnarray*}
\widetilde{\sigma}^{(0)}(x,t,z)=\left \{
\begin {array}{ll}
\sigma_+^{(0)}(x,t),\ \qquad&\text{in}\  \{(x,t):d^{[k]}(x,t)\geq\varepsilon\},\\[3pt]
\eta(z)\sigma_{+}^{(0)}(x,t)+\big(1-\eta(z)\big)\sigma_{-}^{(0)}(x,t),\ \ &\text{in}\  \{(x,t):-\varepsilon<d^{[k]}(x,t)<\varepsilon\},\\[3pt]
\sigma_-^{(0)}(x,t),\ &\text{in}\ \{(x,t):d^{[k]}(x,t)\leq-\varepsilon\},
\end{array}
\right.
\end{eqnarray*}
and
\begin{eqnarray*}
\sigma(x,t)=\left \{
\begin {array}{ll}
\sigma_+^{(0)}(x,t),\ \qquad&\text{in}\ \ \Omega_+,\\[3pt]
\kappa\int_{-1}^{1}\sqrt{2f(u)}du,\qquad &\text{on}\ \ \Gamma,\\[3pt]
\sigma_-^{(0)}(x,t),\ &\text{in}\ \ \Omega_-.
\end{array}
\right.
\end{eqnarray*}

By mean value theorem we derive  in $\Gamma(\frac{\delta}{2})$
\begin{align*}
\bigg|\sigma_+^{(0)}(x,t)-\kappa\int_{-1}^{1}\sqrt{2f(u)}du\bigg|,\ \bigg|\sigma_-^{(0)}(x,t)-\kappa\int_{-1}^{1}\sqrt{2f(u)}du\bigg|\leq C\big|d^{(0)}(x,t)\big|.
\end{align*}
Moreover,
\begin{align*}
\{(x,t):d^{[k]}(x,t)\geq\varepsilon\}\cap\Omega_-&\subseteq\{(x,t):|d^{(0)}(x,t)|\leq C\varepsilon\},\\
\{(x,t):-\varepsilon<d^{[k]}(x,t)<\varepsilon\}&\subseteq\{(x,t):|d^{(0)}(x,t)|\leq C\varepsilon\},\\
\{(x,t):d^{[k]}(x,t)\leq-\varepsilon\}\cap\Omega_+&\subseteq\{(x,t):|d^{(0)}(x,t)|\leq C\varepsilon\}.
\end{align*}
Then there holds
\begin{align*}
\big\|\widetilde{\sigma}^{(0)}-\sigma\big\|_{C(\Gamma(\frac{\delta}{2}))}\leq C\varepsilon,
\end{align*}
which implies the desired result.

The proof of Corollary \ref{main corollary} is completed.
\end{proof}
\indent

\section{Matching $\varepsilon^k(k\geq2)$-order expansions}\label{Matching k-order expansions}
\subsection{Matching kth order($k\geq2$)  outer expansion}\label{section:inexpan-k}
\indent

Substituting (\ref{out-1})-(\ref{out-3}) into (\ref{model}) and  collecting all terms of  $\varepsilon^k$-order($k\geq2$) we have
\begin{align}
u^{(k)}_{\pm}&=\frac{\mu_{\pm}^{(k-1)}+\Delta u^{(k-2)}_{\pm}-g\big(u^{(0)}_{\pm},\cdots,u^{(k-1)}_{\pm}\big)}{f''(u^{(0)}_{\pm})},\label{outerk-3}\\
-\Delta \mu^{(k)}_{\pm}+\mu^{(k)}_{\pm}&=2\sigma^{(k)}_{\pm}+u^{(k)}_{\pm}-\partial_tu^{(k)}_{\pm},\label{outerk-1}\\
\partial_t\sigma^{(k)}_{\pm}-\Delta \sigma^{(k)}_{\pm}+2\sigma^{(k)}_{\pm}&=\mu^{(k)}_{\pm}-u^{(k)}_{\pm}.\label{outerk-2}
\end{align}

\subsection{Matching kth order($k\geq2$)  inner expansion}\label{section:inexpan-k}

\indent

Substituting (\ref{d}) and (\ref{in-1})-(\ref{in-3}) into (\ref{inequ-m-1})-(\ref{inequ-m-3})  and collecting all terms of  $\varepsilon^k$-order  we have
\begin{align}
&-\partial_{zz}\Big(\widetilde{\mu}^{(k)}-\eta\big(p^{(k)}d^{(0)}+p^{(0)}d^{(k)}\big)\Big)
\nonumber\\&\qquad=-\sum_{i=0}^{k-1}\Big(\partial_z\widetilde{u}^{(i)}\partial_td^{(k-1-i)}-2\nabla_x\partial_z\widetilde{\mu}^{(i)}\cdot\nabla_x d^{(k-1-i)}-\partial_z\widetilde{\mu}^{(i)}\triangle_x d^{(k-1-i)} \Big)\nonumber
\\&\qquad\ \  -\Big(\partial_t\widetilde{u}^{(k-2)}-\Delta_x \widetilde{\mu}^{(k-2)}-\big(2\widetilde{\sigma}^{(k-2)}+\widetilde{u}^{(k-2)}-\widetilde{\mu}^{(k-2)}\big)\Big)
\nonumber\\&\qquad \ \ -\eta'' \sum_{i=1}^{k-1}p^{(i)}d^{(k-i)}+\eta''zp^{(k-1)}-\eta'\sum_{i=0}^{k-1} g^{(i)}d^{(k-1-i)}+z\eta'g^{(k-2)}
\nonumber\\&\qquad \ \ +\big(s_+^{(k-2)}\eta^++s_-^{(k-2)}\eta^-\big)
\nonumber\\&\qquad\triangleq \Theta_{k-1,1},\label{in-k-1}
\end{align}
\begin{align}
&-\partial_{zz}\Big(\widetilde{\sigma}^{(k)}-\eta\big(q^{(k)}d^{(0)}+q^{(0)}d^{(k)}\big)\Big)
\nonumber\\&\qquad=-\sum_{i=0}^{k-1}\Big(\partial_z\widetilde{\sigma}^{(i)}\partial_td^{(k-1-i)}-2\nabla_x\partial_z\widetilde{\sigma}^{(i)}\cdot\nabla_x d^{(k-1-i)}-\partial_z\widetilde{\sigma}^{(i)}\triangle_x d^{(k-1-i)} \Big)
\nonumber\\ &\qquad\ \ -\Big(\partial_t\widetilde{\sigma}^{(k-2)}-\Delta_x \widetilde{\sigma}^{(k-2)}+2\widetilde{\sigma}^{(k-2)}+\widetilde{u}^{(k-2)}-\widetilde{\mu}^{(k-2)}\Big)
\nonumber\\&\qquad \ \ -\eta'' \sum_{i=1}^{k-1}q^{(i)}d^{(k-i)}+\eta''zq^{(k-1)}- \eta'\sum_{i=0}^{k-1} h^{(i)}d^{(k-1-i)}+z\eta'h^{(k-2)}
\nonumber\\&\qquad\  \ +\big(r_+^{(k-2)}\eta^++r_-^{(k-2)}\eta^-\big)
\nonumber\\&\qquad\triangleq \Theta_{k-1,2},\label{in-k-2}\\
&-\partial_{zz}\widetilde{u}^{(k)}+f''(\widetilde{u}^{(0)})\widetilde{u}^{(k)}
=-\widetilde{g}\big(\widetilde{u}^{(0)},\cdots,\widetilde{u}^{(k-1)}\big)+2\sum_{i=0}^{k-1}\nabla_x\partial_z\widetilde{u}^{(i)}\cdot\nabla_x d^{(k-1-i)}
\nonumber\\&\qquad\qquad\qquad\qquad\qquad \qquad+\sum_{i=0}^{k-1}\partial_z\widetilde{u}^{(i)}\triangle_x d^{(k-1-i)}+\widetilde{\mu}^{(k-1)}+\triangle_x\widetilde{u}^{(k-2)}
\nonumber\\&\qquad\qquad\qquad\qquad\qquad \qquad-\eta'\sum_{i=0}^{k-1} l^{(i)}d^{(k-1-i)}+z\eta'l^{(k-2)}
\nonumber\\&\qquad\qquad\qquad\qquad\qquad \triangleq \Theta_{k-1,3}.\label{in-k-3}
\end{align}

\underline{Step 1.}
By induction we assume that the inner-outer matching conditions (\ref{mc-1})-(\ref{mc-3}) hold   for order up to $k-1$.
It follows from Lemma 4.3 in \cite{ABC} and direct computations that if

\begin{align}
\int_{-\infty}^{+\infty}\Theta_{k-1,1}(x,t,z)dz=0,\label{in-k-solving condition-1}
\end{align}
then (\ref{in-k-1}) has a bounded solution
\begin{align}
\widetilde{\mu}^{(k)}(x,t,z)=&\eta(z)\mu_+^{(k)}(x,t)+\big(1-\eta(z)\big)\mu_-^{(k)}(x,t)\nonumber\\&
-\eta(z)\int_{-\infty}^{+\infty}\int_{z'}^{+\infty}\Theta_{k-1,1}(x,t,z'')dz''dz'
\nonumber\\&+\int_{-\infty}^{z}\int_{z'}^{+\infty}\Theta_{k-1,1}(x,t,z'')dz''dz'\label{definition:muk}
\end{align}
which satisfies
\begin{align}
\big(p^{(k)}d^{(0)}+p^{(0)}d^{(k)}\big)(x,t)=\mu_+^{(k)}(x,t)-\int_{-\infty}^{+\infty}\int_{z'}^{+\infty}\Theta_{k-1,1}(x,t,z'')dz''dz'-\mu_-^{(k)}(x,t),\label{definition:pk-0}
\end{align}
and
\begin{align}
\big[\mu^{(k)}\big]\triangleq\mu_{+}^{(k)}-\mu_{-}^{(k)}=p^{(0)}d^{(k)}+\int_{-\infty}^{+\infty}\int_{z'}^{+\infty}\Theta_{k-1,1}(x,t,z'')dz''dz',\ \  \ \text{on}\ \ \Gamma,\label{jump condition:muk}
\end{align}
and for any $\alpha,\beta,\gamma\in\mathbb{N}$,
\begin{align*}
D_x^\alpha D_t^\beta D_z^\gamma\Big(\widetilde{\mu}^{(k)}(x,t,z) -\mu^{(k)}_{\pm}(x,t)\Big)=O(e^{-\nu|z|}),\ \ \text{ for} \ |z|\gg 1 \text{ and \ some } \nu>0.
\end{align*}

From (\ref{definition:pk-0})  we can take
\begin{eqnarray}
p^{(k)}=\left \{
\begin {array}{ll}
\frac{1}{d^{(0)}}\Big(\mu_+^{(k)}(x,t)-\mu_-^{(k)}(x,t)-\int_{-\infty}^{+\infty}\int_{z'}^{+\infty}\Theta_{k-1,1}(x,t,z'')dz''dz'
\\ \qquad\qquad\qquad\qquad\qquad\qquad-p^{(0)}d^{(k)}\Big),\quad&\text{in}\ \ \Gamma(\delta)\backslash\Gamma,\\
\\
\nabla_x d^{(0)}\cdot\nabla_x\Big(\mu_+^{(k)}(x,t)-\mu_-^{(k)}(x,t)-\int_{-\infty}^{+\infty}\int_{z'}^{+\infty}\Theta_{k-1,1}(x,t,z'')dz''dz'
\\ \qquad\qquad\qquad\qquad\qquad\qquad-p^{(0)}d^{(k)}\Big),\quad &\text{on} \ \ \Gamma.
\end{array}\label{definition:pk}
\right.
\end{eqnarray}

Moreover, by (\ref{in-k-solving condition-1}) we arrive at
\begin{align}
&2\partial_td^{(k-1)}=-g^{(0)}d^{(k-1)}-g^{(k-1)}d^{(0)}+2\nabla_x\big(\mu_+^{(0)}-\mu_-^{(0)}\big)\cdot\nabla_x d^{(k-1)}-\big(u_+^{(k-1)}-u_-^{(k-1)}\big)\partial_t d^{(0)}
\nonumber\\&\qquad\qquad\quad+2\nabla_x\big(\mu_+^{(k-1)}-\mu_-^{(k-1)}\big)\cdot\nabla_x d^{(0)}+\big(\mu_+^{(k-1)}-\mu_-^{(k-1)}\big)\Delta_x d^{(0)}\nonumber
\\&\qquad\qquad\quad -\sum_{i=1}^{k-2}\Big(\partial_z\widetilde{u}^{(i)}\partial_td^{(k-1-i)}-2\nabla_x\partial_z\widetilde{\mu}^{(i)}\cdot\nabla_x d^{(k-1-i)}-\partial_z\widetilde{\mu}^{(i)}\Delta_x d^{(k-1-i)} \Big)
\nonumber\\&\qquad\qquad\quad -p^{(k-1)}-\sum_{i=1}^{k-2} g^{(i)}d^{(k-1-i)}+g^{(k-2)}\int_{-\infty}^{+\infty}z\eta'dz
+\int_{-\infty}^{+\infty}\big(s_+^{(k-2)}\eta^++s_-^{(k-2)}\eta^-\big)dz
\nonumber\\&\qquad\qquad\quad -\int_{-\infty}^{+\infty}\big(\partial_t\widetilde{u}^{(k-2)}-\Delta_x \widetilde{\mu}^{(k-2)}-(2\widetilde{\sigma}^{(k-2)}+\widetilde{u}^{(k-2)}-\widetilde{\mu}^{(k-2)})\big)dz.\label{definition:gk1-0}
\end{align}

In particular, for $(x,t)\in\Gamma$ there holds
\begin{align}
&2\partial_td^{(k-1)}=-g^{(0)}d^{(k-1)}+2\big[\nabla_x\mu^{(0)}\big]\cdot\nabla_x d^{(k-1)}-\big[u^{(k-1)}\big]\partial_t d^{(0)}
\nonumber\\&\qquad\qquad\quad+2\bigg[\frac{\partial\mu^{(k-1)}}{\partial\nn}\bigg]+\big[\mu^{(k-1)}\big]\Delta_x d^{(0)}\nonumber
\\&\qquad\qquad\quad -\sum_{i=1}^{k-2}\Big(\partial_z\widetilde{u}^{(i)}\partial_td^{(k-1-i)}-2\nabla_x\partial_z\widetilde{\mu}^{(i)}\cdot\nabla_x d^{(k-1-i)}-\partial_z\widetilde{\mu}^{(i)}\Delta_x d^{(k-1-i)} \Big)
\nonumber\\&\qquad\qquad\quad -p^{(k-1)}-\sum_{i=1}^{k-2} g^{(i)}d^{(k-1-i)}+g^{(k-2)}\int_{-\infty}^{+\infty}z\eta'dz
\nonumber\\&\qquad\qquad\quad +\int_{-\infty}^{+\infty}\big(s_+^{(k-2)}\eta^++s_-^{(k-2)}\eta^-\big)dz
\nonumber\\&\qquad\qquad\quad -\int_{-\infty}^{+\infty}\big(\partial_t\widetilde{u}^{(k-2)}-\Delta_x \widetilde{\mu}^{(k-2)}-(2\widetilde{\sigma}^{(k-2)}+\widetilde{u}^{(k-2)}-\widetilde{\mu}^{(k-2)})\big)dz.\label{interface law:k-1-0}
\end{align}

Due to (\ref{jump condition:mu0}) there holds $\big[\nabla_x\mu^{(0)}\big]$  is parallel to $\nabla_x d^{(0)}$ on $\Gamma$ and then $\big[\nabla_x\mu^{(0)}\big]=p^{(0)}\nabla_x d^{(0)}$ on $\Gamma$ by (\ref{definition:p0}),
thus  for $(x,t)\in\Gamma$
\begin{eqnarray}
\big[\nabla_x\mu^{(0)}\big]\cdot\nabla_x d^{(k-1)}=\left \{
\begin {array}{ll}
0,\quad&k=2,\\
\\
-\frac{p^{(0)}}{2}\sum\limits_{i=1}^{k-2}\nabla_x d^{(i)}\cdot\nabla_x d^{(k-1-i)},\quad&k\geq3.
\end{array}\label{interface law:k-1-1}
\right.
\end{eqnarray}

Combining (\ref{interface law:k-1-1}), (\ref{outerk-3})$(k\rightarrow k-1)$,  (\ref{jump condition:muk})$(k\rightarrow k-1)$, (\ref{definition:pk})$(k\rightarrow k-1)$ with (\ref{interface law:k-1-0})
we obtain
\begin{align}
\partial_td^{(k-1)}=\frac{1}{2}\Big(p^{(0)}\Delta_x d^{(0)}-\nabla_x d^{(0)}\cdot\nabla_x p^{(0)}-g^{(0)}\Big)d^{(k-1)}+\frac{1}{2}\bigg[\frac{\partial\mu^{(k-1)}}{\partial\nn}\bigg]+\Lambda_{k-2,1},\ \ \text{on}\ \ \Gamma,\label{interface law:k-1}
\end{align}
where the function $\Lambda_{k-2,1}$ depends only on terms up to order $k-2$.

And by (\ref{definition:gk1-0}) one has
\begin{eqnarray}
g^{(k-1)}=\left \{
\begin {array}{ll}
\frac{1}{d^{(0)}}\Big(-2\partial_td^{(k-1)}-g^{(0)}d^{(k-1)}+2\nabla_x\big(\mu_+^{(0)}-\mu_-^{(0)}\big)\cdot\nabla_x d^{(k-1)}
\\ \qquad\quad-\big(u_+^{(k-1)}-u_-^{(k-1)}\big)\partial_t d^{(0)}
+2\nabla_x\big(\mu_+^{(k-1)}-\mu_-^{(k-1)}\big)\cdot\nabla_x d^{(0)}\\ \qquad\quad +\big(\mu_+^{(k-1)}-\mu_-^{(k-1)}\big)\Delta_x d^{(0)} -p^{(k-1)}+\Lambda_{k-2,2}\Big),&\text{in}\ \ \Gamma(\delta)\backslash\Gamma,\\
\\
\nabla d^{(0)}\cdot\nabla\Big(-2\partial_td^{(k-1)}-g^{(0)}d^{(k-1)}+2\nabla_x\big(\mu_+^{(0)}-\mu_-^{(0)}\big)\cdot\nabla_x d^{(k-1)}
\\ \qquad\qquad\quad-\big(u_+^{(k-1)}-u_-^{(k-1)}\big)\partial_t d^{(0)}
+2\nabla_x\big(\mu_+^{(k-1)}-\mu_-^{(k-1)}\big)\cdot\nabla_x d^{(0)}\\ \qquad\qquad\quad +\big(\mu_+^{(k-1)}-\mu_-^{(k-1)}\big)\Delta_x d^{(0)} -p^{(k-1)}+\Lambda_{k-2,2}\Big),\quad &\text{on} \ \ \Gamma,
\end{array}\label{definition:gk}
\right.
\end{eqnarray}
where the function $\Lambda_{k-2,2}$ depends only on terms up to order $k-2$.
%

Similarly,  if
\begin{align}
\int_{-\infty}^{+\infty}\Theta_{k-1,2}(x,t,z)dz=0,\label{in-k-solving condition-2}
\end{align}
then (\ref{in-k-2}) has a bounded solution
\begin{align*}
\widetilde{\sigma}^{(k)}(x,t,z)=&\eta(z)\sigma_+^{(k)}(x,t)+\big(1-\eta(z)\big)\sigma_-^{(k)}(x,t)\nonumber\\&
-\eta(z)\int_{-\infty}^{+\infty}\int_{z'}^{+\infty}\Theta_{k-1,2}(z'',x,t)dz''dz'
\nonumber\\&+\int_{-\infty}^{z}\int_{z'}^{+\infty}\Theta_{k-1,2}(x,t,z'')dz''dz'
\end{align*}
which satisfies
\begin{align*}
\big[\sigma^{(k)}\big]\triangleq\sigma_{+}^{(k)}-\sigma_{-}^{(k)}=q^{(0)}d^{(k)}+\int_{-\infty}^{+\infty}\int_{z'}^{+\infty}\Theta_{k-1,2}(x,t,z'')dz''dz',\ \  \ \text{on}\ \ \Gamma,
\end{align*}
and for any $\alpha,\beta,\gamma\in\mathbb{N}$,
\begin{align*}
D_x^\alpha D_t^\beta D_z^\gamma\Big(\widetilde{\sigma}^{(k)}(x,t,z) -\sigma^{(k)}_{\pm}(x,t)\Big)=O(e^{-\nu|z|}),\ \ \text{ for} \ |z|\gg 1 \text{ and \ some } \nu>0.
\end{align*}

Furthermore we can obtain
\begin{eqnarray}
q^{(k)}=\left \{
\begin {array}{ll}
\frac{1}{d^{(0)}}\Big(\sigma_+^{(k)}(x,t)-\sigma_-^{(k)}(x,t)-\int_{-\infty}^{+\infty}\int_{z'}^{+\infty}\Theta_{k-1,2}(x,t,z'')dz''dz'
\\ \qquad\qquad\qquad\qquad\qquad\qquad-q^{(0)}d^{(k)}\Big),\quad&\text{in}\ \ \Gamma(\delta)\backslash\Gamma,\\
\\
\nabla_x d^{(0)}\cdot\nabla_x\Big(\sigma_+^{(k)}(x,t)-\sigma_-^{(k)}(x,t)-\int_{-\infty}^{+\infty}\int_{z'}^{+\infty}\Theta_{k-1,2}(x,t,z'')dz''dz'
\\ \qquad\qquad\qquad\qquad\qquad\qquad-q^{(0)}d^{(k)}\Big),\quad &\text{on} \ \ \Gamma,
\end{array}\label{definition:qk}
\right.
\end{eqnarray}
and
\begin{align}
\bigg[\frac{\partial\sigma^{(k-1)}}{\partial\nn}\bigg]=\Big(\nabla_x d^{(0)}\cdot\nabla_x q^{(0)}+h^{(0)}-q^{(0)}\Delta_x d^{(0)}\Big)d^{(k-1)}+\Lambda_{k-2,3},\ \ \text{on}\ \ \Gamma,\label{interface law:sigma-k-1}
\end{align}
and
\begin{eqnarray}
h^{(k-1)}=\left \{
\begin {array}{ll}
\frac{1}{d^{(0)}}\Big(-h^{(0)}d^{(k-1)}-\big(\sigma_+^{(k-1)}-\sigma_-^{(k-1)}\big)\partial_t d^{(0)}
+2\nabla_x\big(\sigma_+^{(k-1)}-\sigma_-^{(k-1)}\big)\cdot\nabla_x d^{(0)}\\ \qquad\quad+\big(\sigma_+^{(k-1)}-\sigma_-^{(k-1)}\big)\Delta_x d^{(0)} -q^{(k-1)}+\Lambda_{k-2,4}\Big),\quad \text{in} \quad  \Gamma(\delta)\setminus \Gamma\\
\\
\nabla_x d^{(0)}\cdot\nabla_x\Big(-h^{(0)}d^{(k-1)}-\big(\sigma_+^{(k-1)}-\sigma_-^{(k-1)}\big)\partial_t d^{(0)}
\\ \qquad\qquad\qquad+2\nabla_x\big(\sigma_+^{(k-1)}-\sigma_-^{(k-1)}\big)\cdot\nabla_x d^{(0)}\\ \qquad\qquad\qquad +\big(\sigma_+^{(k-1)}-\sigma_-^{(k-1)}\big)\Delta_x d^{(0)} -q^{(k-1)}+\Lambda_{k-2,4}\Big),\quad  \text{on} \ \ \Gamma ,
\end{array}\label{definition:hk}
\right.
\end{eqnarray}
where the functions $\Lambda_{k-2,3}$ and $\Lambda_{k-2,4}$ depend only on terms up to order $k-2$.

\underline{Step 2.}
Based on the method of variation of constants and direct computations (or Lemma 4.3 in \cite{ABC}) we find that if
 \begin{align}
\int_{-\infty}^{+\infty}\Theta_{k-1,3}(x,t,z)\theta'(z)dz=0,\label{in-k-solving condition-3}
\end{align}
then (\ref{in-k-3}) has a bounded solution $\widetilde{u}^{(k)}(x,t,z)$ satisfying  $\widetilde{u}^{(k)}(0,x,t)=0$ and for any $\alpha,\beta,\gamma\in\mathbb{N}$,
\begin{align*}
D_x^\alpha D_t^\beta D_z^\gamma\Big(\widetilde{u}^{(k)}(x,t,z) -u^{(k)}_{\pm}(x,t)\Big)=O(e^{-\nu|z|}),\ \ \text{ for} \ |z|\gg 1 \text{ and \ some } \nu>0.
\end{align*}

According to (\ref{definition:muk}) and (\ref{in-k-solving condition-3}) we get
\begin{align}
&\mu_{+}^{(k-1)}\int_{-\infty}^{+\infty}\eta(z)\theta'(z)dz+\mu_{-}^{(k-1)}\int_{-\infty}^{+\infty}\big(1-\eta(z)\big)\theta'(z)dz
\nonumber\\&=-\Delta_x d^{(k-1)}\int_{-\infty}^{+\infty}\big(\theta'(z)\big)^2dz+l^{(0)}d^{(k-1)}\int_{-\infty}^{+\infty}\eta'(z)\theta'(z)dz
\nonumber\\&\quad+l^{(k-1)}d^{(0)}\int_{-\infty}^{+\infty}\eta'(z)\theta'(z)dz+\Lambda_{k-2,5},\label{interface condition:muk1-00}
\end{align}
where  the function $\Lambda_{k-2,5}$ depends only on terms up to order $k-2$.  Here we have used the fact $\widetilde{u}^{(k-1)}$ actually depends only on terms up to order $k-2$.
In particular, for $(x,t)\in\Gamma$ there holds
\begin{align}
&\mu_{+}^{(k-1)}\int_{-\infty}^{+\infty}\eta(z)\theta'(z)dz+\mu_{-}^{(k-1)}\int_{-\infty}^{+\infty}\big(1-\eta(z)\big)\theta'(z)dz
\nonumber\\&=-\Delta_x d^{(k-1)}\int_{-\infty}^{+\infty}\big(\theta'(z)\big)^2dz+l^{(0)}d^{(k-1)}\int_{-\infty}^{+\infty}\eta'(z)\theta'(z)dz+\Lambda_{k-2,5},\ \ \text{on}\ \ \Gamma.\label{interface condition:muk1-0}
\end{align}

It follows from (\ref{jump condition:muk})$(k\rightarrow k-1)$ and (\ref{interface condition:muk1-0}) that
\begin{align}
\mu_{\pm}^{(k-1)}&=-\frac{1}{2}\Delta_x d^{(k-1)}\int_{-\infty}^{+\infty}\big(\theta'(z)\big)^2dz+\frac{1}{2}l^{(0)}d^{(k-1)}\int_{-\infty}^{+\infty}\eta'(z)\theta'(z)dz
\nonumber\\&\quad+p^{(0)}d^{(k-1)}\int_{-\infty}^{+\infty}\Big(\frac{1}{2}-\eta(z)\pm\frac{1}{2}\Big)\theta'(z)dz+\Lambda_{k-2,6}^{\pm},\ \ \ \ \text{on}\ \ \Gamma,\label{interface condition:muk1}
\end{align}
where  the functions $\Lambda_{k-2,6}^{\pm}$ depend only on terms up to order $k-2$.

And by (\ref{interface condition:muk1-00}) one has
\begin{eqnarray}
l^{(k-1)}=\left \{
\begin {array}{ll}
\frac{1}{d^{(0)}\int_{-\infty}^{+\infty}\eta'(z)\theta'(z)dz}\Big(\mu_{+}^{(k-1)}\int_{-\infty}^{+\infty}\eta(z)\theta'(z)dz+\mu_{-}^{(k-1)}\int_{-\infty}^{+\infty}\big(1-\eta(z)\big)\theta'(z)dz
\\[3pt]
 \qquad\qquad\qquad\qquad\ +\Delta_x d^{(k-1)}\int_{-\infty}^{+\infty}\big(\theta'(z)\big)^2dz-l^{(0)}d^{(k-1)}\int_{-\infty}^{+\infty}\eta'(z)\theta'(z)dz
\\[3pt] \qquad\qquad\qquad\qquad\  +\Lambda_{k-2,7}\Big),\quad \text{in}\ \ \Gamma(\delta)\backslash\Gamma,\\
\\
\frac{1}{\int_{-\infty}^{+\infty}\eta'(z)\theta'(z)dz}\nabla_x d^{(0)}\cdot\nabla_x\Big(\mu_{+}^{(k-1)}\int_{-\infty}^{+\infty}\eta(z)\theta'(z)dz+\mu_{-}^{(k-1)}\int_{-\infty}^{+\infty}\big(1-\eta(z)\big)\theta'(z)dz
\\[3pt] \qquad\qquad\qquad\qquad\qquad\quad +\Delta_x d^{(k-1)}\int_{-\infty}^{+\infty}\big(\theta'(z)\big)^2dz-l^{(0)}d^{(k-1)}\int_{-\infty}^{+\infty}\eta'(z)\theta'(z)dz
\\[3pt] \qquad\qquad\qquad\qquad\qquad\quad  +\Lambda_{k-2,7}\Big),\quad  \text{on} \ \ \Gamma,
\end{array}\label{definition:lk}
\right.
\end{eqnarray}
where the function $\Lambda_{k-2,7}$ depends only on terms up to order $k-2$.
\subsection{Matching kth order($k\geq2$)  boundary layer expansion}\label{section:bdexpan-k}
\indent

Substituting (\ref{bd-1})-(\ref{bd-3}) into (\ref{bdequ-1})-(\ref{bdequ-3}) and (\ref{bdc-1})-(\ref{bdc-3}) and collecting all terms of  $\varepsilon^k$-order($k\geq2$) we have
\begin{align}
&-\partial_{zz}\mu_B^{(k)}=\Xi_{k-1,1},\label{bdequ-1k}\\
&-\partial_{zz}\sigma_B^{(k)}=\Xi_{k-1,2},\label{bdequ-2k}\\
&-\partial_{zz}u_B^{(k)}+f''(1)u_B^{(k)}=\Xi_{k-1,3},\label{bdequ-3k}
\end{align}
 and on $\partial\Omega\times[0,T]$
\begin{align}
&\partial_{z}\mu_B^{(0)}(x,t,0)=0,\ \ \partial_{z}\mu_B^{(k)}(x,t,0)=-\nabla_x\mu_B^{(k-1)}(x,t,0)\cdot \nabla_x d_B(x,t),\label{bdc-1k}\\
&\partial_{z}\sigma_B^{(0)}(x,t,0)=0,\ \ \partial_{z}\sigma_B^{(k)}(x,t,0)=-\nabla_x\sigma_B^{(k-1)}(x,t,0)\cdot \nabla_x d_B(x,t),\label{bdc-2k}\\
&\partial_{z}u_B^{(0)}(x,t,0)=0,\ \ \partial_{z}u_B^{(k)}(x,t,0)=-\nabla_xu_B^{(k-1)}(x,t,0)\cdot \nabla_x d_B(x,t),\label{bdc-3k}
\end{align}
where the functions $\Xi_{k-1,1},\Xi_{k-1,2}$ and $\Xi_{k-1,3}$ depend only on the terms up to order $k-1.$  More concretely,
\begin{align*}
&\Xi_{k-1,1}=2\nabla_x\partial_z\mu_B^{(k-1)}\cdot \nabla_x d_B+\partial_z\mu_B^{(k-1)} \Delta_x d_B-\partial_tu_B^{(k-2)}+\Delta_x \mu_B^{(k-2)}\\& \qquad\qquad\qquad+2\sigma_B^{(k-2)}+u_B^{(k-2)}-\mu_B^{(k-2)},\\
&\Xi_{k-1,2}=2\nabla_x\partial_z\sigma_B^{(k-1)}\cdot\nabla_x d_B+\partial_z\sigma_B^{(k-1)}\Delta_x d_B-\partial_t\sigma_B^{(k-2)}+\Delta_x \sigma_B^{(k-2)}
\\& \qquad\qquad\qquad-\big(2\sigma_B^{(k-2)}+u_B^{(k-2)}-\mu_B^{(k-2)}\big),\\
&\Xi_{k-1,3}=-g_{_B}\big(u_B^{(0)},\cdots,u_B^{(k-1)}\big)+2\nabla_x\partial_zu_B^{(k-1)}\cdot\nabla_x d_B+\partial_zu_B^{(k-1)}\Delta_x d_B
\\& \qquad\qquad\qquad+\mu_B^{(k-1)}+\Delta_x u_B^{(k-2)}.
\end{align*}

By induction we assume that the boundary-outer matching conditions (\ref{bmc-1})-(\ref{bmc-3}) hold   for order up to $k-1$.
We then by  (\ref{bdequ-1k})-(\ref{bdequ-2k}) get
\begin{align}
&\mu_B^{(k)}(x,t,z)=-\int_{-\infty}^z\int_{-\infty}^{z'}\Xi_{k-1,1}(x,t,z'')dz''dz'+\mu_+^{(k)}(x,t),\label{def:bdequ-1k}\\
&\sigma_B^{(k)}(x,t,z)=-\int_{-\infty}^z\int_{-\infty}^{z'}\Xi_{k-1,2}(x,t,z'')dz''dz'+\sigma_+^{(k)}(x,t),\label{def:bdequ-2k}
\end{align}
and (\ref{bmc-2})-(\ref{bmc-3}) for order $k$ are satisfied by induction arguments.

In order that $\mu_B^{(k)}$ defined by (\ref{def:bdequ-1k}) satisfies  (\ref{bdc-1k})($k\geq2$) and $\sigma_B^{(k)}$ defined by (\ref{def:bdequ-2k}) satisfies (\ref{bdc-2k})($k\geq2$), we only need to assume on $\partial\Omega\times[0,T]$ that
\begin{align}
&\nabla_x d_B(x,t)\cdot\nabla_x\mu_+^{(k-1)}(x,t)
\nonumber\\&=-\Big(2\nabla_x d_B(x,t)\cdot\nabla_x+\Delta_x d_B(x,t)\Big)\int_{-\infty}^0\int_{-\infty}^{z'}\Xi_{k-2,1}(x,t,z'')dz''dz'
\nonumber\\&\quad-\int_{-\infty}^{0}\Big(\partial_tu_B^{(k-2)}-\Delta_x \mu_B^{(k-2)}-2\sigma_B^{(k-2)}-u_B^{(k-2)}+\mu_B^{(k-2)}\Big)(x,t,z)dz
\nonumber\\&\triangleq\Pi_{k-2,1}\label{bdc-1k-0}
\end{align}
and
\begin{align}
&\nabla_x d_B(x,t)\cdot\nabla_x\sigma_+^{(k-1)}(x,t)\nonumber\\&=-\Big(2\nabla_x d_B(x,t)\cdot\nabla_x+\Delta_x d_B(x,t)\Big)\int_{-\infty}^0\int_{-\infty}^{z'}\Xi_{k-2,2}(x,t,z'')dz''dz'
\nonumber\\&\quad-\int_{-\infty}^{0}\Big(\partial_t\sigma_B^{(k-2)}-\Delta_x \sigma_B^{(k-2)}-2\sigma_B^{(k-2)}-u_B^{(k-2)}+\mu_B^{(k-2)}\Big)(x,t,z)dz
\nonumber\\&\triangleq\Pi_{k-2,2}.\label{bdc-2k-0}
\end{align}

In fact, for $(x,t)\in\partial\Omega\times[0,T]$ one has
\begin{align*}
-\partial_{z}\mu_B^{(k)}(x,t,0)&=\int_{-\infty}^{0}\Xi_{k-1,1}(x,t,z)dz
\nonumber\\&=\Big(2\nabla_x d_B(x,t)\cdot\nabla_x+\Delta_x d_B(x,t)\Big)\mu_B^{(k-1)}(x,t,0)
\nonumber\\&\quad-\Big(2\nabla_x d_B(x,t)\cdot\nabla+\Delta_x d_B(x,t)\Big)\mu_+^{(k-1)}(x,t)
\nonumber\\&\quad-\int_{-\infty}^{0}\Big(\partial_tu_B^{(k-2)}-\Delta_x \mu_B^{(k-2)}-2\sigma_B^{(k-2)}-u_B^{(k-2)}+\mu_B^{(k-2)}\Big)(x,t,z)dz
\nonumber\\&=-\Big(2\nabla_x d_B(x,t)\cdot\nabla_x+\Delta_x d_B(x,t)\Big)\int_{-\infty}^0\int_{-\infty}^{z'}\Xi_{k-2,1}(x,t,z'')dz''dz'
\nonumber\\&\quad-\int_{-\infty}^{0}\Big(\partial_tu_B^{(k-2)}-\Delta_x \mu_B^{(k-2)}-2\sigma_B^{(k-2)}-u_B^{(k-2)}+\mu_B^{(k-2)}\Big)(x,t,z)dz
\end{align*}
and
\begin{align*}
\nabla_x\mu_B^{(k-1)}(x,t,0)\cdot \nabla_x d_B(x,t)&=-\nabla_x d_B(x,t)\cdot\nabla_x\bigg(\int_{-\infty}^0\int_{-\infty}^{z'}\Xi_{k-2,1}(x,t,z'')dz''dz'\bigg)
\nonumber\\&\quad+\nabla_x d_B(x,t)\cdot\nabla_x\mu_+^{(k-1)}(x,t).
\end{align*}
Then we easily get (\ref{bdc-1k})($k\geq2$) with the help of (\ref{bdc-1k-0}) and the above equalities. The other case is complete similar.

Finally, we equip (\ref{bdequ-3k})($k\geq2$) with the following boundary condition at $z=0$
\begin{align}
\partial_{z}u_B^{(k)}(x,t,0)=-\nabla_xu_B^{(k-1)}(x,t,0)\cdot \nabla_x d_B(x,t).\label{bdc-3k-0}
\end{align}
for $(x,t)\in\partial\Omega(\delta)\times[0,T]$. Obviously (\ref{bdc-3k-0}) implies (\ref{bdc-3k}).

Since (\ref{bdequ-3k})($k\geq1$) is a  linear second-order ordinary differential equation with constant coefficients, we can solve it explicitly and conclude that there exists a unique  solution $u_B^{(k)}$ which satisfies  (\ref{bdc-3k-0}) and (\ref{bmc-1}).
\subsection{Solving expansions of $k$th order($k\geq 1$)}
\indent

%
%
Assuming  $u_{\pm}^{(k-1)},\mu_{\pm}^{(k-1)},\sigma_{\pm}^{(k-1)}, d^{(k-1)}$, $p^{(k-1)},q^{(k-1)},g^{(k-1)},h^{(k-1)},l^{(k-1)}$, $\widetilde{u}^{(k-1)},\widetilde{\mu}^{(k-1)},\widetilde{\sigma}^{(k-1)}$,   $\mu^{(k-1)}_{B}, \sigma^{(k-1)}_{B},u^{(k-1)}_{B}$  are known and  the inner-outer matching conditions (\ref{mc-1})-(\ref{mc-3}), the boundary-outer matching conditions (\ref{bmc-1})-(\ref{bmc-3}) hold   for order up to $k-1$. Then $u^{(k)}_{\pm}$ are defined by (\ref{outerk-3}). Combining (\ref{outerk-1}), (\ref{outerk-2}), (\ref{equ:gradient d}), (\ref{interface condition:muk1})($k-1\rightarrow k$), (\ref{interface law:sigma-k-1})($k-1\rightarrow k$),  (\ref{interface law:k-1})($k-1\rightarrow k$), (\ref{bdc-1k-0})($k-1\rightarrow k$), (\ref{bdc-2k-0})($k-1\rightarrow k$), we have
\begin{eqnarray}
\left \{
\begin {array}{ll}
-\Delta \mu^{(k)}_{\pm}+\mu^{(k)}_{\pm}=2\sigma^{(k)}_{\pm}+u^{(k)}_{\pm}-\partial_tu^{(k)}_{\pm},\qquad&\text{in}\ \ \Omega_{\pm},\\[3pt]
\partial_t\sigma^{(k)}_{\pm}-\Delta \sigma^{(k)}_{\pm}+2\sigma^{(k)}_{\pm}=\mu^{(k)}_{\pm}-u^{(k)}_{\pm},\quad&\text{in}\ \ \Omega_{\pm},\\[3pt]
\nabla d^{(0)}\cdot\nabla d^{(k)}=\mathcal{D}_{k-1},&\text{in}\ \ \Gamma(\delta),\\[3pt]
\mu_{\pm}^{(k)}=-a_{0}\Delta d^{(k)}+a_{1}d^{(k)}+\Lambda_{k-1,6}^{\pm},\ \ \ \ &\text{on}\ \ \Gamma,\\[3pt]
\big[\frac{\partial\sigma^{(k)}}{\partial\nn}\big]=a_2d^{(k)}+\Lambda_{k-1,3},\ \ &\text{on}\ \ \Gamma,\\[3pt]
\partial_td^{(k)}=a_3d^{(k)}+\frac{1}{2}\big[\frac{\partial\mu^{(k)}}{\partial\nn}\big]+\Lambda_{k-1,1},\ \ &\text{on}\ \ \Gamma,\\[3pt]
\frac{\partial\mu_+^{(k)}}{\partial\nn}=\Pi_{k-1,1},\ \ &\text{on}\ \ \partial\Omega,\\[3pt]
 \frac{\partial\sigma_+^{(k)}}{\partial\nn}=\Pi_{k-1,2},\ \ &\text{on}\ \ \partial\Omega,\\[3pt]
 d^{(k)}(x,0)=0,&\text{on}\ \ \Gamma_0,\label{kmodel}
\end{array}
\right.
\end{eqnarray}
where $a_0$ is a positive constant, the functions $a_1, a_2$ and $a_3$ depend on $p^{(0)},q^{(0)},g^{(0)},h^{(0)},l^{(0)}$ and $d^{(0)}$, and $\Gamma_0=\Gamma|_{t=0}$.
Giving initial data $\sigma^{(k)}_{\pm}(x,0)$ and solving (\ref{kmodel}) leads to $\mu_{\pm}^{(k)},\sigma_{\pm}^{(k)}$ and $d^{(k)}$. (\ref{kmodel}) is a ``linearized'' Hele-Shaw problem(P193 in \cite{ABC}) coupled linearly with a heat equation satisfied by $\sigma^{(k)}_{\pm}$. The first and key strategy is to get the value of $d^{(k)}$ on $\Gamma$. Here we don't aim to show the lengthy details and one can refer to the similar arguments in Section 6 of \cite{ABC}.

Then $p^{(k)},q^{(k)},g^{(k)},h^{(k)},l^{(k)}$ are determined by (\ref{definition:pk}), (\ref{definition:qk}), (\ref{definition:gk}), (\ref{definition:hk}), (\ref{definition:lk}) respectively. Moreover $\widetilde{u}^{(k)},\widetilde{\mu}^{(k)},\widetilde{\sigma}^{(k)}$ are determined in section \ref{section:inexpan-k},   $\mu^{(k)}_{B}, \sigma^{(k)}_{B},u^{(k)}_{B}$ are determined in section  \ref{section:bdexpan-k}, and  the inner-outer matching conditions (\ref{mc-1})-(\ref{mc-3}) and  the boundary-outer matching conditions (\ref{bmc-1})-(\ref{bmc-3}) hold   for order $k$.
\begin{Remark}
We can extend $(u^{(k)}_{\pm}, \mu^{(k)}_{\pm}, \sigma^{(k)}_{\pm})$ smoothly from $\Omega_{\pm}$ to $\Omega$ as in Remark 4.1 in  \cite{ABC}.
\end{Remark}
\indent

\section{Construction of an approximate solution}\label{onstruction of an approximate solution}
\indent

In this section we divide into two steps to construct an approximate solution and determine the system which is satisfied by the approximate solution.

\underline{Step 1.}
In $\Omega_{+}\cup\Omega_{-}$ we define
\begin{align*}
u_O^A(x,t)=\bigg(\sum_{i=0}^k\varepsilon^iu^{(i)}_{+}(x,t)\bigg)\chi_{\Omega_+}(x,t)+\bigg(\sum_{i=0}^k\varepsilon^iu^{(i)}_{-}(x,t)\bigg)\chi_{\Omega_-}(x,t),\\
\mu_O^A(x,t)=\bigg(\sum_{i=0}^k\varepsilon^i\mu^{(i)}_{+}(x,t)\bigg)\chi_{\Omega_+}(x,t)+\bigg(\sum_{i=0}^k\varepsilon^i\mu^{(i)}_{-}(x,t)\bigg)\chi_{\Omega_-}(x,t),\\
\sigma_O^A(x,t)=\bigg(\sum_{i=0}^k\varepsilon^i\sigma^{(i)}_{+}(x,t)\bigg)\chi_{\Omega_+}(x,t)+\bigg(\sum_{i=0}^k\varepsilon^i\sigma^{(i)}_{-}(x,t)\bigg)
\chi_{\Omega_-}(x,t),
\end{align*}
where $\chi_{\Omega_{\pm}}$ is the characteristic function of $\Omega_{\pm}$.

Thanks to outer matching expansion procedure, we obtain in $\Omega_{+}\cup\Omega_{-}$
\begin{align*}
\partial_tu_O^A-\Delta \mu_O^A&=2\sigma_O^A+u_O^A-\mu_O^A,\\
\partial_t\sigma_O^A-\Delta \sigma_O^A&=-(2\sigma_O^A+u_O^A-\mu_O^A),\\
\mu_O^A&=-\varepsilon\Delta u_O^A+\varepsilon^{-1}f'(u_O^A)+O(\varepsilon^{k}).
\end{align*}

In $\Gamma(\delta)$ we define
\begin{align*}
u_I^A(x,t)=\sum_{i=0}^k\varepsilon^i\tilde{u}^{(i)}(x,t,z)\bigg|_{z=\frac{d^{[k]}(x,t)}{\varepsilon}},\\
\mu_I^A(x,t)=\sum_{i=0}^k\varepsilon^i\tilde{\mu}^{(i)}(x,t,z)\bigg|_{z=\frac{d^{[k]}(x,t)}{\varepsilon}},\\
\sigma_I^A(x,t)=\sum_{i=0}^k\varepsilon^i\tilde{\sigma}^{(i)}(x,t,z)\bigg|_{z=\frac{d^{[k]}(x,t)}{\varepsilon}}.
\end{align*}

Thanks to inner matching expansion procedure, we obtain in $\Gamma(\delta)$
\begin{align*}
\partial_tu_I^A-\Delta \mu_I^A&=2\sigma_I^A+u_I^A-\mu_I^A+O(\varepsilon^{k-1}),\\
\partial_t\sigma_I^A-\Delta \sigma_I^A&=-(2\sigma_I^A+u_I^A-\mu_I^A)+O(\varepsilon^{k-1}),\\
\mu_I^A&=-\varepsilon\Delta u_I^A+\varepsilon^{-1}f'(u_I^A)+O(\varepsilon^{k}).
\end{align*}

In $\partial\Omega(\delta)$ we define
\begin{align*}
u_B^A(x,t)=\sum_{i=0}^k\varepsilon^iu_B^{(i)}(x,t,z)\bigg|_{z=\frac{d_B(x)}{\varepsilon}}-\varepsilon^ku_B^{(k)}(0,x,t),\\
\mu_B^A(x,t)=\sum_{i=0}^k\varepsilon^i\mu_B^{(i)}(x,t,z)\bigg|_{z=\frac{d_B(x)}{\varepsilon}}-\varepsilon^k\mu_B^{(k)}(0,x,t),\\
\sigma_B^A(x,t)=\sum_{i=0}^k\varepsilon^i\sigma_B^{(i)}(x,t,z)\bigg|_{z=\frac{d_B(x)}{\varepsilon}}-\varepsilon^k\sigma_B^{(k)}(0,x,t).
\end{align*}

Thanks to boundary matching expansion procedure, we obtain in $\partial\Omega(\delta)$
\begin{align*}
\partial_tu_B^A-\Delta \mu_B^A&=2\sigma_B^A+u_B^A-\mu_B^A+O(\varepsilon^{k-1}),\\
\partial_t\sigma_B^A-\Delta \sigma_B^A&=-(2\sigma_B^A+u_B^A-\mu_B^A)+O(\varepsilon^{k-1}),\\
\mu_B^A&=-\varepsilon\Delta u_B^A+\varepsilon^{-1}f'(u_B^A)+O(\varepsilon^{k-1})
\end{align*}
and
\begin{align*}
\frac{\partial u_B^A}{\partial\nn}=\frac{\partial \mu_B^A}{\partial\nn}=\frac{\partial \sigma_B^A}{\partial\nn}=0,\ \ \ \text{on}\ \ \partial\Omega\times(0,T).
\end{align*}

\underline{Step 2.}
We define  $(\overline{u^A},\overline{\mu^A},\overline{\sigma^A})$ as follows:
\begin{eqnarray*}
\overline{u^A}=\left \{
\begin {array}{ll}
u_B^A,\ \qquad\qquad\qquad&\text{in}\ \ \partial\Omega(\frac{\delta}{2}),\\[3pt]
u_B^A\zeta(\frac{d_B}{\delta})+u_O^A\big(1-\zeta(\frac{d_B}{\delta})\big),\ \qquad\qquad\qquad&\text{in}\ \ \partial\Omega(\delta)\backslash\partial\Omega(\frac{\delta}{2}),\\[3pt]
u_O^A,\ \qquad\qquad\qquad&\text{in}\ \ \Omega\backslash\big(\partial\Omega(\delta)\cup\Gamma(\delta)\big),\\[3pt]
u_I^A\zeta(\frac{d^{(0)}}{\delta})+u_O^A\big(1-\zeta(\frac{d^{(0)}}{\delta})\big),\ \qquad\qquad\qquad&\text{in}\ \ \Gamma(\delta)\backslash\Gamma(\frac{\delta}{2}),\\[3pt]
u_I^A,\ \qquad\qquad\qquad&\text{in}\ \ \Gamma(\frac{\delta}{2}),
\end{array}
\right.
\end{eqnarray*}
and $\overline{\mu^A},\overline{\sigma^A}$ are defined similarly, where
\beno
\zeta\in C_c^\infty(R), \quad \zeta=1 \quad {\rm for}\quad |\zeta|\leq \frac12,\quad \zeta=0 \quad {\rm for}\quad |\zeta|\geq 1.
\eeno

Based on the boundary-outer matching conditions (\ref{bmc-1})-(\ref{bmc-3}) and  inner-outer matching conditions (\ref{mc-1})-(\ref{mc-3}), one has  for small $\varepsilon$
\begin{align*}
\big\|\overline{u^A}-u_O^A\big\|_{C^2(\partial\Omega(\delta)\backslash\partial\Omega(\frac{\delta}{2}))}&=\big\|(u_B^A-u_O^A)\zeta(\frac{d_B}{\delta})\big\|_{C^2(\partial\Omega(\delta)\backslash\partial\Omega(\frac{\delta}{2}))}
\\&=O(\varepsilon^2e^{-\frac{\nu\delta}{2\varepsilon}})+O(\varepsilon^k)
\end{align*}
and
\begin{align*}
\big\|\overline{u^A}-u_O^A\big\|_{C^2(\Gamma(\delta)\backslash\Gamma(\frac{\delta}{2}))}&=\big\|(u_I^A-u_O^A)\zeta(\frac{d^{(0)}}{\delta})\big\|_{C^2(\Gamma(\delta)\backslash\Gamma(\frac{\delta}{2}))}
\\&=O(\varepsilon^2e^{-\frac{\nu\delta}{4\varepsilon}}).
\end{align*}
And we can obtain the similar results for $\overline{\mu^A}-\mu_O^A$ and $\overline{\sigma^A}-\sigma_O^A$.

Consequently  $(\overline{u^A},\overline{\mu^A},\overline{\sigma^A})$ satisfies in $\Omega\times(0,T)$
\begin{align*}
\partial_t\overline{u^A}-\Delta \overline{\mu^A}&=2\overline{\sigma^A}+\overline{u^A}-\overline{\mu^A}+\omega^A_1,\\
\partial_t\overline{\sigma^A}-\Delta \overline{\sigma^A}&=-(2\overline{\sigma^A}+\overline{u^A}-\overline{\mu^A})+\omega^A_2,\\
\overline{\mu^A}&=-\varepsilon\Delta\overline{ u^A}+\varepsilon^{-1}f'(\overline{u^A})+\omega^A_3
\end{align*}
and
\begin{align*}
\frac{\partial \overline{u^A}}{\partial\nn}=\frac{\partial\overline{ \mu^A}}{\partial\nn}=\frac{\partial \overline{\sigma^A}}{\partial\nn}=0,\ \ \ \text{on}\ \ \partial\Omega\times(0,T),
\end{align*}
where $\omega^A_i=O(\varepsilon^{k-1})(i=1,2,3)$ which depend on $(\overline{u^A},\overline{\mu^A},\overline{\sigma^A}).$

Let $\overline{\varphi^A}=\overline{u^A}+\overline{\sigma^A}$, then $\overline{\varphi^A}$ satisfies
\begin{eqnarray*}
\left \{
\begin {array}{ll}
\partial_t\overline{\varphi^A}-\Delta \overline{\mu^A}-\Delta \overline{\sigma^A}=\omega^A_1+\omega^A_2,\ &\text{in}\ \ \Omega\times(0,T),\\
\frac{\partial \overline{\varphi^A}}{\partial\nn}=0,\quad &\text{on}\ \partial\Omega\times(0,T).
\end{array}
\right.
\end{eqnarray*}

Define the  approximate solution $(\varphi^A,\mu^A,\sigma^A)$ as follows:
\begin{eqnarray}
\left \{
\begin {array}{ll}
\varphi^A(x,t)=\overline{\varphi^A}(x,t)-\frac{1}{|\Omega|}\int_0^t\int_{\Omega}\big(\omega^A_1+\omega^A_2\big)(x,t')dxdt',\\[3pt]
\mu^A(x,t)=\overline{\mu^A}(x,t)-\tilde{\mu}^A(x,t),\\[3pt]
\sigma^A(x,t)=\overline{\sigma^A}(x,t),\\[3pt]
u^A(x,t)=\overline{u^A}(x,t)-\omega^A_2(x,t)-\tilde{\mu}^A(x,t),\label{approximate solutions}
\end{array}
\right.
\end{eqnarray}
where $\tilde{\mu}^A(x,t)$ satisfies
\begin{eqnarray*}
\left \{
\begin {array}{ll}
\Delta \tilde{\mu}^A=\omega^A_1+\omega^A_2-\frac{1}{|\Omega|}\int_{\Omega}\big(\omega^A_1+\omega^A_2\big)(x,t)dx,\ &\text{in}\ \ \Omega\times(0,T),\\
\frac{\partial \tilde{\mu}^A}{\partial\nn}=0,\quad &\text{on}\ \partial\Omega\times(0,T),\\[3pt]
\int_{\Omega}\tilde{\mu}^A(x,t)dx=0,\quad &t\in(0,T).
\end{array}
\right.
\end{eqnarray*}

Then the  approximate solution $(\varphi^A,\mu^A,\sigma^A)$ satisfies
\begin{eqnarray}
\left \{
\begin {array}{ll}
\partial_t\varphi^A-\Delta \mu^A-\Delta \sigma^A=0,\ &\text{in}\ \ \Omega\times(0,T),\\
\partial_t\sigma^A-\Delta \sigma^A=-(2\sigma^A+u^A-\mu^A),\ &\text{in}\ \ \Omega\times(0,T),\\
\mu^A=-\varepsilon\Delta u^A+\varepsilon^{-1}f'(u^A)+\omega_4^A,\ &\text{in}\ \ \Omega\times(0,T),\\
\frac{\partial \varphi^A}{\partial\nn}=\frac{\partial \mu^A}{\partial\nn}=\frac{\partial\sigma^A}{\partial\nn}=0,\ &\text{on}\ \ \partial\Omega\times(0,T),\label{approximate model}
\end{array}
\right.
\end{eqnarray}
where $\omega_4^A=O(\varepsilon^{k-2})$.
\indent

\section{Appendix}
\indent

In this Appendix we give the proof of (\ref{Jacobic transform}) which has been proved in \cite{FWZZ}. Here we repeat the proof out of the completeness of this paper.
\begin{proof}
Firstly we can observe that
\begin{align}
\int_{-\frac{\delta}{\ve}}^{\frac{\delta}{\ve}}\big((\partial_zv)^2+f''\big(\theta(z)\big)v^2\big)Jdz\geq&\int_{-\frac{\delta}{\ve}}^{\frac{\delta}{\ve}}
\big((\partial_z\hat{v})^2+f''\big(\theta(z)\big)\hat{v}^2\big)dz-\ve\int_{-\frac{\delta}{\ve}}^{\frac{\delta}{\ve}}\hat{v}\partial_z\hat{v}
J_{r_t}J^{-1}dz
\nonumber\\&-C\ve^2\int_{-\frac{\delta}{\ve}}^{\frac{\delta}{\ve}}v^2dz.\label{Jacobian inequality -3}
\end{align}
Let $\hat{v}=\gamma q_1^f+p_1$, then
\begin{align}
-\ve\int_{-\frac{\delta}{\ve}}^{\frac{\delta}{\ve}}\hat{v}\partial_z\hat{v}J_{r_t}J^{-1}dz&=-\ve\gamma
\int_{-\frac{\delta}{\ve}}^{\frac{\delta}{\ve}}\hat{v}(q_1^f)'J_{r_t}J^{-1}dz-\ve\int_{-\frac{\delta}{\ve}}^{\frac{\delta}{\ve}}
\hat{v}\partial_z{p_1}J_{r_t}J^{-1}dz
\nonumber\\&=-\ve\gamma\int_{-\frac{\delta}{\ve}}^{\frac{\delta}{\ve}}\hat{v}(q_1^f-\alpha \theta')'J_{r_t}J^{-1}dz+\ve\alpha\gamma\int_{-\frac{\delta}{\ve}}^{\frac{\delta}{\ve}}\hat{v}\theta''J_{r_t}J^{-1}dz
\nonumber\\&\quad-\ve\int_{-\frac{\delta}{\ve}}^{\frac{\delta}{\ve}}\hat{v}\partial_z{p_1}J_{r_t}J^{-1}dz.\label{Jacobian inequality-4}
\end{align}

We easily find
\begin{align}
-\big(q^f_1-\alpha \theta'\big)''+f''(\theta)(q^f_1-\alpha \theta')=\lambda_1^f q^f_1,\ \ (q^f_1-\alpha \theta')'\bigg|^{\frac{\delta}{\ve}}_{-\frac{\delta}{\ve}}=-\alpha \theta''\bigg|^{\frac{\delta}{\ve}}_{-\frac{\delta}{\ve}}.\nonumber
\end{align}
Multiplying the above equation by $q^f_1-\alpha \theta'$, integrating by parts and using (\ref{eigenfunc:f}), we have
\begin{align}
\int_{-\frac{\delta}{\ve}}^{\frac{\delta}{\ve}}\big|\big(q^f_1-\alpha \theta'\big)'\big|^2dz
&=-\int_{-\frac{\delta}{\ve}}^{\frac{\delta}{\ve}}f''(\theta)(q^f_1-\alpha \theta')^2dz
+\lambda_1^f\int_{-\frac{\delta}{\ve}}^{\frac{\delta}{\ve}}q^f_1(q^f_1-\alpha \theta')dz
\nonumber\\&\quad-\alpha^2\theta''(\frac{\delta}{\ve})\theta'(\frac{\delta}{\ve})+\alpha^2\theta''(-\frac{\delta}{\ve})\theta'(-\frac{\delta}{\ve})
\nonumber\\&=O(e^{-\frac{C}{\ve}}).\label{eigenfunction derivative}
\end{align}
It follows  from (\ref{eigenfunction derivative}) that
\begin{align}
\ve\gamma\int_{-\frac{\delta}{\ve}}^{\frac{\delta}{\ve}}\hat{v}(q_1^f-\alpha \theta')'J_{r_t}J^{-1}dz&\leq C\ve\gamma\bigg(\int_{-\frac{\delta}{\ve}}^{\frac{\delta}{\ve}}|\hat{v}|^2dz\bigg)^{\frac12}\bigg(
\int_{-\frac{\delta}{\ve}}^{\frac{\delta}{\ve}}|(q_1^f-\alpha \theta')'|^2dz\bigg)^{\frac12}
\nonumber\\&\leq  O(e^{-\frac{C}{\ve}})\bigg(\int_{-\frac{\delta}{\ve}}^{\frac{\delta}{\ve}}|\hat{v}|^2dz\bigg)
\leq  O(e^{-\frac{C}{\ve}})\int_{-\frac{\delta}{\ve}}^{\frac{\delta}{\ve}}v^2dz.\label{Jacobian inequality-5}
\end{align}

And  from (\ref{Jacobic derivative}) one has
\begin{align}
\ve\alpha\gamma\bigg|\int_{-\frac{\delta}{\ve}}^{\frac{\delta}{\ve}}\hat{v}\theta''J_{r_t}J^{-1}dz\bigg|
&\leq C\ve^2\alpha\gamma\int_{-\frac{\delta}{\ve}}^{\frac{\delta}{\ve}}|\hat{v}\theta''z|dz\nonumber\\
&\leq C\ve^2\alpha\gamma\bigg(\int_{-\frac{\delta}{\ve}}^{\frac{\delta}{\ve}}|\hat{v}|^2dz\bigg)^{\frac{1}{2}}
\bigg(\int_{-\frac{\delta}{\ve}}^{\frac{\delta}{\ve}}|\theta''z|^2dz\bigg)^{\frac{1}{2}}
\nonumber\\
&\leq C\ve^2\int_{-\frac{\delta}{\ve}}^{\frac{\delta}{\ve}}v^2dz.\label{Jacobian inequality-6}
\end{align}

Using (\ref{eigen:f-2}) we can arrive at
\begin{align}
\int_{-\frac{\delta}{\ve}}^{\frac{\delta}{\ve}}\big|\partial_zp_1\big|^2dz&= \int_{-\frac{\delta}{\ve}}^{\frac{\delta}{\ve}}\bigg(\big| ( \partial_zp_1\big|^2+f''(\theta)({p_1})^2\bigg)dz-\int_{-\frac{\delta}{\ve}}^{\frac{\delta}{\ve}}f''(\theta)({p_1})^2dz
\nonumber\\&\leq C\int_{-\frac{\delta}{\ve}}^{\frac{\delta}{\ve}}\bigg(\big|\partial_zp_1\big|^2+f''(\theta)({p_1})^2\bigg)dz
\nonumber\\&=C\int_{-\frac{\delta}{\ve}}^{\frac{\delta}{\ve}}\bigg(\partial_z \hat{v})^2+f''(\theta)\hat{v}^2\bigg)dz-C\lambda_1^f\gamma^2
\nonumber\\&\leq C\int_{-\frac{\delta}{\ve}}^{\frac{\delta}{\ve}}\bigg((\partial_z \hat{v})^2+f''(\theta)\hat{v}^2\bigg)dz+Ce^{-\frac{C_2}{\varepsilon}}\int_{-\frac{\delta}{\ve}}^{\frac{\delta}{\ve}}\hat{v}^2dz.\label{ortho derivative}\end{align}
By (\ref{ortho derivative}) and the Young's inequality, one has
\begin{align}
\bigg|-\ve\int_{-\frac{\delta}{\ve}}^{\frac{\delta}{\ve}}\hat{v}\partial_z{p_1}J_{r_t}J^{-1}dz\bigg|&\leq C\ve \bigg(\int_{-\frac{\delta}{\ve}}^{\frac{\delta}{\ve}}v^2dz \bigg)^{\frac{1}{2}}\bigg(\int_{-\frac{\delta}{\ve}}^{\frac{\delta}{\ve}}(\partial_z{p_1})^2dz
\bigg)^{\frac{1}{2}}
\nonumber\\&\leq C\ve^2\int_{-\frac{\delta}{\ve}}^{\frac{\delta}{\ve}}v^2dz+\frac{1}{4}\int_{-\frac{\delta}{\ve}}^{\frac{\delta}{\ve}}\big((\partial_z\hat{v})^2+f''(\theta)\hat{v}^2\big)dz.\nonumber
\end{align}
Thus
\begin{align}
-\ve\int_{-\frac{\delta}{\ve}}^{\frac{\delta}{\ve}}\hat{v}\partial_z{p_1}J_{r_t}J^{-1}dz\geq
-C\ve^2\int_{-\frac{\delta}{\ve}}^{\frac{\delta}{\ve}}v^2dz-\frac{1}{4}\int_{-\frac{\delta}{\ve}}^{\frac{\delta}{\ve}}\big((\partial_z\hat{v})^2+f''(\theta)\hat{v}^2\big)dz.\label{Jacobian inequality-7}
\end{align}

Substituting (\ref{Jacobian inequality-5}), (\ref{Jacobian inequality-6}) and (\ref{Jacobian inequality-7}) into  (\ref{Jacobian inequality-4}) we have
\begin{align}
-\ve\int_{-\frac{\delta}{\ve}}^{\frac{\delta}{\ve}}\hat{v}\partial_z{\hat{v}}J_{r_t}J^{-1}dz\geq
-C\ve^2\int_{-\frac{\delta}{\ve}}^{\frac{\delta}{\ve}}v^2dz-\frac{1}{4}\int_{-\frac{\delta}{\ve}}^{\frac{\delta}{\ve}}\big((\partial_z\hat{v})^2+f''(\theta)\hat{v}^2\big)dz\nonumber
\end{align}
which together with  (\ref{Jacobian inequality -3}) leads to (\ref{Jacobic transform}).

Hence the proof of  (\ref{Jacobic transform}) is finished.\end{proof}
\section*{Acknowledgments}
The authors would like to thank Professor Z. Zhang in Peking University for helpful discussions. M. Fei is partly supported by NSF of China under Grant 11301005 and AHNSF grant 1608085MA13. W. Wang is partly supported by NSF of China under Grant 11501502 and ``the Fundamental Research Funds for the Central Universities" No. 2016QNA3004.

\end{document}